\newtheorem{thm}{Theorem}
\newtheorem{corl}[thm]{Corollary}
\newtheorem{lma}[thm]{Lemma} \newtheorem{prop}[thm]{Proposition}
\newtheorem{defn}[thm]{Definition}
\newtheorem{assu}{Assumption}
\newtheorem{rem}[thm]{Remark}
\newcommand{\red}{\color{red}}
\def\1{\mathbb{I}}
\def\C{\mathbb{C}}
\DeclareMathOperator{\dom}{Dom}
\DeclareMathOperator{\im}{Im}
\def\R{\mathbb{R}}
\def\tilde{\widetilde}
\title{On a theorem of Kucerovsky for half-closed chains}
\author{Jens Kaad and Walter D. van Suijlekom}
\address{Department of Mathematics and Computer Science, Syddansk Universitet, Campusvej 55, 5230, Odense M, Denmark}
\email{jenskaad@hotmail.com}
\address{Institute for Mathematics, Astrophysics and Particle Physics, Radboud University Nijmegen, Heyendaalseweg 135, 6525 AJ Nijmegen, The Netherlands}
\email{waltervs@math.ru.nl}
\date{\today}
\subjclass[2010]{19K35; 58B34}
\keywords{Unbounded Kasparov modules, Half-closed chains, Unbounded modular cycles, $KK$-theory, Unbounded $KK$-theory, Kasparov product, Unbounded Kasparov product}
\begin{document}
\maketitle

\begin{abstract}
Kucerovsky's theorem provides a method for recognizing the interior Kasparov product of selfadjoint unbounded cycles. In this paper we extend Kucerovsky's theorem to the non-selfadjoint setting by replacing unbounded Kasparov modules with Hilsum's half-closed chains. On our way we show that any half-closed chain gives rise to a multitude of twisted selfadjoint unbounded cycles via a localization procedure. These unbounded modular cycles allow us to provide verifiable criteria avoiding any reference to domains of adjoints of symmetric unbounded operators.
\end{abstract}

\section{Introduction}
In recent years a lot of attention has been given to the non-unital framework for noncommutative geometry, where the absence of a unit is interpreted as a non-compactness condition on the underlying noncommutative space, \cite{Con:NCG,Lat:QLM,CGRS:ILN,MeRe:NMU}. For a more detailed analysis of the non-compact setting it is important to distinguish between the complete and the non-complete case, \cite{MeRe:NMU}. Whereas the complete case is still modelled by a (non-unital) spectral triple or more generally an unbounded Kasparov module, the lack of completeness leads to the non-selfadjointness of symmetric differential operators. A noncommutative geometric framework that captures the non-complete setting is provided by Hilsum's notion of a half-closed chain, where the selfadjointness condition on the unbounded operator is replaced by a more flexible symmetry condition, \cite{Hil:BIK}. This framework is supported by results of Baum, Douglas, Taylor and Hilsum showing that \emph{any} first-order symmetric elliptic differential operator on \emph{any} Riemannian manifold gives rise to a half-closed chain, \cite{BDT:CRA,Hil:BIK}.

Unbounded Kasparov modules give rises to classes in Kasparov's KK-theory via the Baaj-Julg bounded transform and this result has been extended by Hilsum to cover half-closed chains, \cite{BaJu:TBK,Hil:BIK}. This transform contains information about the algebraic topology of the original geometric situation described by a half-closed chain.

The main structural property of Kasparov's KK-theory is the interior Kasparov product, \cite{Kas:OFE}:
\[
\hot_B : KK(A,B) \ti KK(B,C) \to KK(A,C) \, .
\]
The interior Kasparov product is however not explicitly constructed and it is therefore important to develop tools for computing the interior Kasparov product of two given Kasparov modules. Given three classes in KK-theory, Connes and Skandalis developed suitable conditions for verifying whether one of these three classes factorizes as an interior Kasparov product of the remaining two classes, \cite{CoSk:LIF}.

The conditions of Connes and Skandalis were translated to the unbounded setting by Kucerovsky, \cite{Kuc:KUM}. Thus, given three unbounded Kasparov modules, Kucerovsky's theorem provides criteria for verifying that one of these unbounded Kasparov modules factorizes as an unbounded Kasparov product of the remaining two unbounded Kasparov modules. In many cases, the conditions are easier to verify directly at the unbounded level, using Kucerovsky's theorem, instead of first applying the bounded transform and then relying on the results of Connes and Skandalis. Indeed, in the unbounded setting we are usually working with first-order differential operators whereas their bounded transforms are zeroth-order pseudo-differential operators involving a square root of the resolvent.

In this paper we extend Kucerovsky's theorem to cover the non-complete setting, where the unbounded Kasparov modules are replaced by half-closed chains. The main challenge in carrying out such a task is that the domain of the adjoint of a symmetric unbounded operator can be difficult to describe. The original proof of Kucerovsky does therefore not translate to the non-selfadjoint setting as the correct conditions have to be formulated without any reference to maximal domains of symmetric unbounded operators. 

The main technique that we apply is a localization procedure relating to the work of the first author in \cite{Kaa:UKM,Kaa:DAH}. This procedure allows us to pass from a symmetric regular unbounded operator $D$ to an {\it essentially selfadjoint} regular unbounded operator of the form $x D x^*$ for an appropriate bounded adjointable operator $x$. In the case where $D$ is a Dirac operator, the localization corresponds to a combination of two operations: restricting all data to an open subset and passing from the non-complete Riemannian metric on this open subset to a conformally equivalent but complete Riemannian metric. The size of the open neighborhood and the relevant conformal factor are both determined by the positive function $xx^*$. 

In particular, our technique allows us to construct a multitude of unbounded modular cycles out of a given half-closed chain. We interpret this localization procedure in terms of the unbounded Kasparov product by the module generated by the localizing element $x$. In this way, we may work with selfadjoint unbounded operators and hence eliminate the difficulties relating to the description of maximal domains. On the other hand, the ``conformal factor'' $(xx^*)^{-2}$ produces a twist of the commutator condition and this twist is described by the modular automorphism $\sigma(\cd ) = (xx^*)( \cdot ) (xx^*)^{-1}$. We refer to Connes and Moscovici for further discussion of this issue in the case where $x$ is positive and invertible, see \cite{CM08}. 

The present paper is motivated by the geometric setting of a proper Riemannian submersion of spin$^{c}$-manifolds, and the criteria that we develop here have already been applied in \cite{KaSu:FDA} to obtain factorization results involving the corresponding fundamental classes in KK-theory.

Our results may also be of importance for the further development of the unbounded Kasparov product as initiated by Connes in \cite{Con:GMF} and developed further by Mesland and others in \cite{Mes:UCN,KaLe:SFU,BMS13,Kaa:UKM,MeRe:NMU,Kaa:MEU}.

The structure of this paper is as follows: In Section \ref{s:half} and Section \ref{s:modular} we review the concept of a half-closed chain and of an unbounded modular cycle. In Section \ref{s:locaregu}, Section \ref{s:locahalf} and Section \ref{s:unbkas} we prove our results on the localization procedure and investigate how it relates to the Kasparov product. In Section \ref{s:kuce} we prove Kucerovsky's theorem for half-closed chains.

\subsection*{Acknowledgements}
We would like to thank Georges Skandalis for a highly stimulating remark concerning the ``locality'' of Kucerovsky's theorem.

This work also benefited from various conversations with Magnus Goffeng and Bram Mesland.

The authors gratefully acknowledge the Syddansk Universitet Odense and the Radboud University Nijmegen for their financial support in facilitating this collaboration. 

During the initial stages of this research project the first author was supported by the Radboud excellence fellowship.
%
%

The first author was partially supported by the DFF-Research Project 2 ``Automorphisms and Invariants of Operator Algebras'', no. 7014-00145B and by the Villum Foundation (grant 7423).

The second author was partially supported by NWO under VIDI-grant 016.133.326.
%

\section{Half-closed chains}\label{s:half}
Let us fix two $\si$-unital $C^*$-algebras $A$ and $B$.

Let $E$ be a countably generated Hilbert $C^*$-module over $B$. We recall that a closed (densely defined) unbounded operator $D : \Tex{Dom}(D) \to E$ is said to be {\it regular} when it has a densely defined adjoint $D^* : \Tex{Dom}(D^*) \to E$ and when $1 + D^* D : \Tex{Dom}(D^* D) \to E$ has dense range. It follows from this definition that $1 + D^* D : \Tex{Dom}(D^* D) \to E$ is in fact densely defined and surjective, \cite[Lemma 9.1]{Lan:HMT}. In particular we have a bounded adjointable inverse $(1 + D^* D)^{-1} : E \to E$.

For two countably generated Hilbert $C^*$-modules $E$ and $F$ over $B$, we let $\mathbb{L}(E,F)$ and $\mathbb{K}(E,F)$ denote the bounded adjointable operators from $E$ to $F$ and the compact operators from $E$ to $F$, respectively. When $E = F$ we put $\mathbb{L}(E) := \mathbb{L}(E,F)$ and $\mathbb{K}(E) := \mathbb{K}(E,F)$. We let $\| \cd \|_\infty : \mathbb{L}(E,F) \to [0,\infty)$ denote the operator norm.

The following definition is due to Hilsum, \cite[Section 3]{Hil:BIK}:

\begin{defn}
A \emph{half-closed chain} from $A$ to $B$ is a triple $(\sA,E,D)$, where $\sA \subseteq A$ is a norm-dense $*$-subalgebra, $E$ is a countably generated $C^*$-correspondence from $A$ to $B$ and $D : \Tex{Dom}(D) \to E$ is a closed, symmetric and regular unbounded operator such that
\begin{enumerate}
\item $a \cd (1 + D^* D)^{-1}$ is a compact operator on $E$ for all $a \in A$;
\item $a\big( \Tex{Dom}(D^*) \big) \su \Tex{Dom}(D)$ for all $a \in \sA$;
\item $[D,a] : \Tex{Dom}(D) \to E$ extends to a bounded operator $d(a) : E \to E$ for all $a \in \sA$.
\end{enumerate}

A half-closed chain $(\sA,E,D)$ from $A$ to $B$ is said to be \emph{even} when $E$ comes equipped with a $\zz/2\zz$-grading operator $\ga : E \to E$ ($\ga = \ga^*$, $\ga^2 = 1$), such that $[a,\ga] = 0$ for all $a \in A$ and $D \ga = - \ga D$.

A half-closed chain which is not even is said to be \emph{odd}.
\end{defn}

Let $(\sA,E,D)$ be a half-closed chain from $A$ to $B$. A few observations are in place: 
\begin{enumerate}
\item $d(a) : E \to E$, $a \in \sA$, is automatically adjointable with $d(a)^* = - d(a^*)$.
\item The difference
\[
D a - a D^* : \Tex{Dom}(D^*) \to E \q a \in \sA
\]
extends to the bounded adjointable operator $d(a) : E \to E$.
\item $a \cd (1 + D D^*)^{-1} \in \mathbb{K}(E)$ for all $a \in A$. (Remark that $D^*$ is automatically regular by \cite[Proposition 9.5]{Lan:HMT}).
\end{enumerate}

We recall that a \emph{Kasparov module} from $A$ to $B$ is a pair $(E,F)$ where $E$ is a countably generated $C^*$-correspondence from $A$ to $B$ and $F : E \to E$ is a bounded adjointable operator such that
\[
a \cd ( F - F^*) \, , \, \, a \cd (F^2 - 1) \, , \, \, [F,a] \in \mathbb{K}(E) \, ,
\]
for all $a \in A$. A Kasparov module $(E,F)$ from $A$ to $B$ is \emph{even} when it comes equipped with a $\zz/2\zz$-grading operator $\ga : E \to E$ such that $[ a,\ga] = 0$ for all $a \in A$ and $F \ga + \ga F = 0$. Otherwise we say that $(E,F)$ is \emph{odd}.

For an unbounded regular operator $D : \Tex{Dom}(D) \to E$ we let $F_D := D(1 + D^* D)^{-1/2} \in \mathbb{L}(E)$ denote the \emph{bounded transform} of $D$. We have that $F_D^* = F_{D^*} = D^*(1 + D D^*)^{-1/2}$.

The next result creates the main link between half-closed chains and Kasparov modules. This result is due to Hilsum, \cite{Hil:BIK}, and it generalizes the corresponding result of Baaj and Julg for unbounded Kasparov modules, \cite{BaJu:TBK}. Remark however that the condition $[F_D,a] \in \mathbb{K}(E)$, $a \in A$, is for some reason left unproved in \cite[Theorem 3.2]{Hil:BIK}. We therefore give a full proof of this commutator condition here:

\begin{thm}\label{t:kasphalf}
Suppose that $(\sA,E,D)$ is a half-closed chain from $A$ to $B$. Then $(E,F_D)$ is a Kasparov module from $A$ to $B$ of the same parity as $(\sA,E,D)$ and with the same $\zz/2\zz$-grading operator $\ga : E \to E$ in the even case.
\end{thm}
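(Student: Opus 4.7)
My plan is to verify the three Kasparov-module conditions $a(F_D - F_D^*),\, a(F_D^2-1),\, [F_D,a]\in\mathbb{K}(E)$ for $a\in A$, plus parity compatibility in the even case. The parity is immediate from functional calculus: $\gamma D=-D\gamma$ implies $\gamma D^*D = D^*D\gamma$, hence $\gamma$ commutes with $(1+D^*D)^{-1/2}$, so $\gamma F_D = -F_D\gamma$. The first two Kasparov conditions are handled by Hilsum's original argument \cite{Hil:BIK}. The focus of the proof, then, is the commutator condition $[F_D,a]\in\mathbb{K}(E)$---the one Hilsum leaves unproved.

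By norm-density of $\sA$ in $A$ together with norm-continuity of $[F_D,\cdot]$ and closedness of $\mathbb{K}(E)$, it suffices to handle $a\in\sA$. Setting $R_\lambda := (1+\lambda+D^*D)^{-1}$ and $\tilde R_\lambda := (1+\lambda+DD^*)^{-1}$, I would start from the Baaj-Julg integral representation
\[
F_D = \frac{1}{\pi}\int_0^\infty \lambda^{-1/2}\,DR_\lambda\,d\lambda,
\]
which converges in operator norm since $\|DR_\lambda\|_\infty\le(1+\lambda)^{-1/2}$, and then commute with $a$ inside the integral to get
\[
[F_D,a] = \frac{1}{\pi}\int_0^\infty \lambda^{-1/2}\,[DR_\lambda,a]\,d\lambda.
\]
The goal is to show each integrand $[DR_\lambda,a]$ is compact and has operator norm $O((1+\lambda)^{-1})$, so that the integral is a norm-convergent integral of compact operators, hence compact.

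For compactness, I would split $[DR_\lambda,a] = DR_\lambda\cdot a - a\cdot DR_\lambda$ and treat each piece. Factoring $DR_\lambda = F_{D,\lambda}\,R_\lambda^{1/2}$ with $F_{D,\lambda} := D(1+\lambda+D^*D)^{-1/2}$ bounded, the compactness of $DR_\lambda\cdot a$ reduces to that of $R_\lambda^{1/2}a$, which follows from the subordination integral $R_\lambda^{1/2} = \tfrac{1}{\pi}\int_0^\infty \mu^{-1/2}R_{\lambda+\mu}\,d\mu$, the hypothesis $aR_\nu\in\mathbb{K}(E)$ and taking adjoints. For $aDR_\lambda$, I would pass to the adjoint $(aDR_\lambda)^* = R_\lambda D^*a^*$ and exploit the intertwining $R_\lambda D^* = D^*\tilde R_\lambda = F_{D^*,\lambda}\tilde R_\lambda^{1/2}$ together with $\tilde R_\lambda^{1/2}a^*\in\mathbb{K}(E)$, a consequence of $a\tilde R_0\in\mathbb{K}(E)$.

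For the refined norm estimate, I would use the identity $[DR_\lambda,a] = D[R_\lambda,a] + d(a)R_\lambda$, valid because $a$ preserves $\Tex{Dom}(D)$. The hardest technical step will be to derive
\[
[R_\lambda,a] = -R_\lambda\,d(a)\,DR_\lambda - R_\lambda D^*\,d(a)\,R_\lambda
\]
by a quadratic-form pairing with $\Tex{Dom}(D^*D)$-vectors: in the non-self-adjoint setting, $a$ does not generally preserve $\Tex{Dom}(D^*D)$, so the usual symmetric computation of $R_\lambda[D^*D,a]R_\lambda$ is not directly available. Substituting this into the identity above and simplifying via $DR_\lambda D^* = 1-(1+\lambda)\tilde R_\lambda$ collapses to
\[
[DR_\lambda,a] = -DR_\lambda\,d(a)\,DR_\lambda + (1+\lambda)\tilde R_\lambda\,d(a)\,R_\lambda,
\]
with norm at most $2\|d(a)\|_\infty(1+\lambda)^{-1}$. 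Since $\int_0^\infty \lambda^{-1/2}(1+\lambda)^{-1}\,d\lambda = \pi$, the integral converges in operator norm to a compact operator, giving $[F_D,a]\in\mathbb{K}(E)$.
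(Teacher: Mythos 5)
Your proposal is correct and follows the same core strategy as the paper: write $F_D$ via the Baaj--Julg integral, compute $[DR_\lambda,a]$ explicitly, and exploit the $O\bigl((1+\lambda)^{-1}\bigr)$ decay of the integrand. Your two-term identity $[DR_\lambda,a]=-DR_\lambda d(a)DR_\lambda+(1+\lambda)\tilde R_\lambda d(a)R_\lambda$ is exactly the paper's formula after substituting $DD^*\tilde R_\lambda=1-(1+\lambda)\tilde R_\lambda$; your intermediate step, the quadratic-form computation of $[R_\lambda,a]=-R_\lambda d(a)DR_\lambda-D^*\tilde R_\lambda d(a)R_\lambda$, does go through (pair $\langle\xi,[R_\lambda,a]\eta\rangle$ against $R_\lambda\xi,R_\lambda\eta\in\Tex{Dom}(D^*D)$, commute $a$ and $a^*$ past $D$ using $a,a^*:\Tex{Dom}(D)\to\Tex{Dom}(D)$, and use $d(a^*)^*=-d(a)$), though the paper simply states the resulting formula without this derivation. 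The one genuine divergence is the compactness argument: you reduce only to $a\in\sA$ and then must prove $[DR_\lambda,a]\in\mathbb{K}(E)$ by splitting into $DR_\lambda\cdot a$ and $a\cdot DR_\lambda$ and invoking the subordination integral for $R_\lambda^{1/2}a$ and $\tilde R_\lambda^{1/2}a^*$, while the paper instead reduces to showing $[F_D,a]b\in\mathbb{K}(E)$ for $a,b\in\sA$, at which point compactness of $M(\lambda)=\lambda^{-1/2}[DR_\lambda,a]b$ is immediate from the explicit formula since every term ends in $R_\lambda b$ or $DR_\lambda b$. The paper's extra right factor $b$ therefore buys a cleaner compactness argument and avoids the subordination trick, whereas your route is a bit more work at that step but removes the auxiliary element $b$. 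Both are valid; the quantitative constant you obtain ($2\|d(a)\|_\infty$ versus the paper's $3\|d(a)\|_\infty$) is also fine since you use the simplified two-term form.
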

\begin{proof}
We have to show that $[F_D,a] \in \mathbb{K}(E)$ for all $a \in A$. Since the $*$-algebra $\sA \su A$ is dense in $C^*$-norm and since the $C^*$-algebra $\mathbb{K}(E) \su \mathbb{L}(E)$ is closed in operator norm it suffices to show that $[F_D,a] \cd b \in \mathbb{K}(E)$ for all $a,b \in \sA$.

We recall that
\[
(1 + D^* D)^{-1/2} = \frac{1}{\pi} \int_0^\infty \la^{-1/2} (1 + \la + D^* D)^{-1} \, d\la \, ,
\]
where the integral converges absolutely in operator norm and where the integrand is continuous in operator norm. Remark here that $\| (1 + \la + D^* D)^{-1} \|_\infty \leq (1 + \la)^{-1}$ for all $\la \geq 0$. 

For $a \in \sA$ and $\la \geq 0$ we then compute that
\[
\begin{split}
& \big[  D (1 + \la + D^* D)^{-1} , a \big] \\
& \q = - D D^* (1 + \la + D D^*)^{-1} d(a) (1 + \la + D^* D)^{-1} \\ 
& \qq - D (1 + \la + D^* D)^{-1} d(a) D(1 + \la + D^* D)^{-1} \\
& \qqq + d(a)(1 + \la + D^* D)^{-1} \, .
\end{split}
\]
In particular, it holds for each $a,b \in \sA$ that the map
\[
M : (0,\infty) \to \mathbb{L}(E) \q M(\la) := \la^{-1/2} [  D (1 + \la + D^* D)^{-1} , a ] b
\]
is continuous in operator norm and that $M(\la) \in \mathbb{K}(E)$ for all $\la \in (0,\infty)$. Moreover, we have the estimate
\[
\| M(\la) \|_\infty \leq \la^{-1/2} \cd \| d(a) \| \cd 3 \cd (1 + \la)^{-1}  \, ,
\]
for all $\la > 0$. We may thus conclude that
\[
[F_D,a] b = \frac{1}{\pi} \int_0^\infty M(\la) \, d\la \in \mathbb{K}(E) \, ,
\]
for all $a,b \in \sA$. This proves the theorem.
\end{proof}




\section{Unbounded modular cycles}\label{s:modular}
Let us fix $\si$-unital $C^*$-algebras $A$ and $B$ together with a dense $*$-subalgebra $\sA \su A$.

The following definition is from \cite[Section 3]{Kaa:UKM}:

\begin{defn}\label{d:modular}
An \emph{unbounded modular cycle} from $\sA$ to $B$ is a triple $(E,D,\De)$ where $E$ is a countably generated $C^*$-correspondence from $A$ to $B$, $D : \Tex{Dom}(D) \to E$ is an unbounded selfadjoint and regular operator, and $\De : E \to E$ is a bounded positive and selfadjoint operator with norm-dense image such that
\begin{enumerate}
\item $a (i + D)^{-1} : E \to E$ is a compact operator for all $a \in A$;
\item $(a + \la) \Delta$ has $\Tex{Dom}(D) \subseteq E$ as an invariant submodule and
\[
D (a + \la) \De - \De (a + \la) D : \Tex{Dom}(D) \to E
\]
extends to a bounded adjointable operator $d_\De(a,\la) : E \to E$ for all $a \in \sA$, $\la \in \cc$.
\item The supremum
\[
\sup_{\ep > 0} \| (\De^{1/2} + \ep)^{-1} d_\De(a,\la) (\De^{1/2} + \ep)^{-1} \|_\infty
\]
is finite for all $a \in \sA$, $\la \in \cc$.
\item The sequence $\{ \De (\De + 1/n)^{-1} a \}$ converges in operator norm to $a$ for all $a \in A$.
\end{enumerate}

An unbounded modular cycle is \emph{even} when $E$ comes equipped with a $\zz/2\zz$-grading operator $\ga : E \to E$ ($\ga = \ga^*$, $\ga^2 = 1$), such that $[a,\ga] = 0$ for all $a \in A$ and $D \ga = - \ga D$.

An unbounded modular cycle is \emph{odd} when it is not even.
\end{defn}

\begin{rem}
Note that if $\Delta$ has a bounded inverse then (3) and (4) are automatic. If, in addition, $A$ is unital, $\Delta, \Delta^{-1} \in \sA$ and $B=\C$ then the modular cycle $(E,D,\Delta)$ defines a twisted spectral triple in the sense of \cite{CM08}, with the twisting automorphism $\sigma : \sA \to \sA$ given by $\sigma(a) = \Delta a \Delta^{-1}$ for all $a \in \sA$. 
\end{rem}

\begin{rem}\label{r:cbb}
In \cite{Kaa:UKM} it is assumed that $\sA$ is equipped with a fixed operator space norm $\| \cd \|_1 : M_n(\sA) \to [0,\infty)$, $n \in \nn$, such that the inclusion $\sA \to A$ is completely bounded. In the above definition it is then required that the supremum in $(3)$ is completely bounded in the sense that
\[
\sup_{\ep > 0} \| (\De^{1/2} + \ep)^{-1} d_\De(a,0) (\De^{1/2} + \ep)^{-1} \|_\infty \leq C \cd \| a \|_1
\]
for all $a \in M_n(\sA)$, $n \in \nn$ (thus, the constant $C$ is independent of the size of the matrices). This structure is relevant for the construction of the unbounded Kasparov product, but will not play a role in the present text.
\end{rem}

As in the case of half-closed chains, each unbounded modular cycle represents an explicit class in KK-theory. This result can be found as \cite[Theorem 9.1]{Kaa:UKM}. We state it here for the convenience of the reader. We recall that $F_D := D(1 + D^2)^{-1/2}$ denotes the bounded transform of $D : \Tex{Dom}(D) \to E$ (but now $D$ is selfadjoint and regular).

\begin{thm}\label{t:modkas}
Suppose that $(E,D,\De)$ is an unbounded modular cycle from $\sA$ to $B$. Then $(E, F_D)$ is a Kasparov module from $A$ to $B$ of the same parity as $(E,D,\De)$ and the same $\zz/2\zz$-grading operator $\ga : E \to E$ in the even case.
\end{thm}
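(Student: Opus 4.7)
The first two Kasparov-module axioms are essentially formal. Since $D$ is selfadjoint, continuous functional calculus gives $F_D = F_D^*$, hence $a(F_D - F_D^*) = 0$. The factorisation $(1+D^2)^{-1} = (D+i)^{-1}(D-i)^{-1}$, combined with condition~(1) (which ensures $a(i+D)^{-1} \in \mathbb{K}(E)$), implies $a(F_D^2 - 1) = -a(1+D^2)^{-1} \in \mathbb{K}(E)$. In the even case, the anticommutation $D\gamma = -\gamma D$ passes through functional calculus to give $F_D\gamma = -\gamma F_D$ directly.

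The remaining and non-trivial claim is the compactness of $[F_D, a]$ for all $a \in A$. By norm-density of $\sA$ in $A$ and norm-closedness of $\mathbb{K}(E) \subseteq \mathbb{L}(E)$, it suffices to treat $a \in \sA$. I would use the integral representation
\[
F_D = \frac{1}{\pi} \int_0^\infty \lambda^{-1/2} D(1 + \lambda + D^2)^{-1} \, d\lambda,
\]
exactly as in the proof of Theorem~\ref{t:kasphalf}, and expand the integrand $[D(1+\lambda+D^2)^{-1}, a]$ via the resolvent identity. The obstruction is that $[D, a]$ itself need not be bounded: only the twisted expressions $D(a\Delta) - (\Delta a)D = d_\Delta(a, 0)$ and $D\Delta - \Delta D = d_\Delta(0, 1)$ furnished by condition~(2) are at our disposal.

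To circumvent this, I would regularise $a$ using condition~(4). Setting $\phi_n(\Delta) := \Delta(\Delta + 1/n)^{-1}$, condition~(4) together with its adjoint version (valid because $\sA$ is a $*$-subalgebra and $\Delta$ is selfadjoint) yields $\phi_n(\Delta) a \phi_n(\Delta) \to a$ in operator norm, so it suffices to show $[F_D, \phi_n(\Delta) a \phi_n(\Delta)] \in \mathbb{K}(E)$ for each fixed $n$. The two $\phi_n(\Delta)$ factors act as cutoffs absorbing the powers of $\Delta^{-1/2}$ produced when $[D, \cdot]$-type expressions are rewritten in terms of $d_\Delta$; the uniform bound in condition~(3) then yields a $\lambda$-integrable dominant of the form $C_n \lambda^{-1/2}(1+\lambda)^{-1}$, so that the resolvent integral converges absolutely in operator norm. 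Compactness of each integrand at fixed $\lambda$ follows from condition~(1) via functional calculus, since $a(1 + D^2)^{-1/2} \in \mathbb{K}(E)$. Hence the integral lies in $\mathbb{K}(E)$, and norm-closedness of $\mathbb{K}(E)$ delivers $[F_D, a] \in \mathbb{K}(E)$ in the limit $n \to \infty$.

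The principal technical obstacle is the careful bookkeeping of domains, since $a$ alone need not preserve $\Tex{Dom}(D)$: the commutator expansions have to be performed first on a common core where $D$, $D^2$ and $\Delta$ are simultaneously well-defined, and only then extended to all of $E$ by continuity using the uniform estimate in condition~(3). The twisted identities must be applied in the right order so that the uncontrolled terms (those with $D$ on the far right without a compensating resolvent or cutoff) either cancel against one another or are absorbed by a $\phi_n(\Delta)$ factor, leaving only expressions that condition~(3) directly bounds.
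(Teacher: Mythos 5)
The paper itself offers no proof of Theorem~\ref{t:modkas}; it simply cites \cite[Theorem 9.1]{Kaa:UKM}, and the technical apparatus behind that reference is partially displayed later, in Section~\ref{ss:modular}. There one sees that the key device is the \emph{modular transform}
\[
G_{(D,\De)}(\eta) \;=\; \frac{1}{\pi}\int_0^\infty \la^{-1/2}\,\De\,(1+\la\De^2+D^2)^{-1}D(\eta)\,d\la ,
\]
together with the twisted resolvent estimate $\|\De(1+\la\De^2+D^2)^{-1}\De\|_\infty \le 2/(1+\la)$ (Equation~\eqref{eq:basic}) and the fact (Theorem~\ref{t:modumodu}) that $F_{D}\De^{6}-G_{(D,\De)}\De^{6}$ is compact. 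The whole point of replacing $(1+\la+D^{2})^{-1}$ by $(1+\la\De^{2}+D^{2})^{-1}$ with an explicit left factor $\De$ is that this produces, built into the integrand, precisely the $\De$ and $\De^{1/2}$ factors that the twisted commutator condition~(2) and the supremum condition~(3) need in order to act. Your sketch stays with the \emph{untwisted} integral representation $F_D=\frac{1}{\pi}\int_0^\infty\la^{-1/2}D(1+\la+D^{2})^{-1}d\la$ and does not arrive at this device.

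This is where the gap lies. Your claim that ``the two $\phi_n(\De)$ factors act as cutoffs absorbing the powers of $\De^{-1/2}$ produced when $[D,\cdot]$-type expressions are rewritten in terms of $d_\De$'' is neither established nor obviously establishable. When you expand $[D(1+\la+D^2)^{-1},\phi_n(\De)\,a\,\phi_n(\De)]$ via the resolvent identity, the inner term $\phi_n(\De)\,[D,a]\,\phi_n(\De)$ must be massaged into $d_\De(a,\cdot)$-form; writing, say, $Da\De = d_\De(a,0)+\De a D$ inserts a new $D$ that the outer resolvent $(1+\la+D^2)^{-1}$ only decays at rate $(1+\la)^{-1/2}$ after one $D$ is used, so no summable dominant of the form $C_n\la^{-1/2}(1+\la)^{-1}$ drops out, and condition~(3) is never in a position to be applied because the $\De^{1/2}$ factors on \emph{both} sides of $d_\De(a,\la)$ do not appear from the untwisted resolvent. (Note also that $(\De^{1/2}+\ep)^{-1}\phi_n(\De)$ has norm of order $\sqrt{n}$, so $\phi_n(\De)$ is not the same kind of cutoff as a genuine $\De^{1/2}$ factor.) The easy parts of your argument --- $F_D=F_D^*$, $a(F_D^2-1)=-a(1+D^2)^{-1}\in\mathbb{K}(E)$ via condition~(1), the grading, reduction to $a\in\sA$, and $\phi_n(\De)a\phi_n(\De)\to a$ from condition~(4) --- are all correct, but the crucial compactness of the regularized commutator at fixed $n$ is asserted, not proved, and the mechanism you propose does not match the structure that conditions (2)--(3) actually control. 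The missing idea is the modular transform and its twisted resolvent; without it, or something playing the same role, the estimates in your sketch do not close.
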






\section{Localization of regular unbounded operators}\label{s:locaregu}
Let $E$ be a countably generated Hilbert $C^*$-module over a $\si$-unital $C^*$-algebra $B$ and let $D : \Tex{Dom}(D) \to E$ be a closed, symmetric and regular unbounded operator.

\begin{assu}\label{a:self}
It will be assumed that $\De : E \to E$ is a bounded selfadjoint operator such that
\begin{enumerate}
\item $\De\big( \Tex{Dom}(D^*) \big) \su \Tex{Dom}(D)$ ;
\item $D \De - \De D : \Tex{Dom}(D) \to E$ extends to a bounded operator $d(\De) : E \to E$.
\end{enumerate}
\end{assu}

Remark that it follows by the above assumption and the inclusion $D \su D^*$ that
\[
D \De - \De D^* : \Tex{Dom}(D^*) \to E
\]
also has $d(\De) : E \to E$ as a bounded extension. Moreover, $d(\De) : E \to E$ is automatically adjointable with $d(\De)^* = - d(\De)$.

Before proving our first result, we notice that $D\De : \Tex{Dom}(D\De) \to E$ is a closed unbounded operator on the domain
\[
\Tex{Dom}(D\De) := \big\{ \xi \in E \mid \De(\xi) \in \Tex{Dom}(D) \big\} \, .
\]
A similar remark holds for $D^* \De : \Tex{Dom}(D^* \De) \to E$.

\begin{prop}\label{p:reg}
Suppose that the conditions in Assumption \ref{a:self} hold. Then
\[
D \De = D^* \De
\]
and $D\De : \Tex{Dom}(D\De) \to E$ is a regular unbounded operator with core $\Tex{Dom}(D)$ and with
\[
(D\De)^* = D\De - d(\De) \, .
\]
In particular, we have that
\[
\Tex{Dom}( (D\De)^* ) = \Tex{Dom}(D\De) \, .
\]
\end{prop}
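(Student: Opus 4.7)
The plan is to reduce the proposition to proving that the midpoint operator
\[
\tilde{D} := D\Delta - \tfrac{1}{2} d(\Delta) \colon \mathrm{Dom}(D\Delta) \to E
\]
is regular and selfadjoint; from this all four claims then follow directly. To set up: $D\Delta$ is closed as the composition of a closed operator with a bounded adjointable one, and the standard adjoint identity $(BT)^* = T^* B^*$ for bounded adjointable $B$ and closed $T$ with $T^{**}=T$, applied with $B=\Delta$ and with $T=D$ or $T=D^*$, gives $(\Delta D)^* = D^*\Delta$ on $\mathrm{Dom}(D^*\Delta)$ and $(\Delta D^*)^* = D\Delta$ on $\mathrm{Dom}(D\Delta)$. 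In particular $D\Delta \subseteq D^*\Delta$ and both are closed and densely defined.

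The main technical step is to prove that $\mathrm{Dom}(D) \subseteq \mathrm{Dom}(D\Delta)$ and that it is a core. The inclusion follows from Assumption \ref{a:self}(1) combined with $\mathrm{Dom}(D) \subseteq \mathrm{Dom}(D^*)$. For density in the graph norm of $D\Delta$, I would regularize a given $\xi \in \mathrm{Dom}(D\Delta)$ by $\xi_n := (1 + n^{-1}D^* D)^{-1}\xi \in \mathrm{Dom}(D^* D) \subseteq \mathrm{Dom}(D)$ and verify $\xi_n \to \xi$ and $D\Delta \xi_n \to D\Delta \xi$ in $E$, using the commutator $D\Delta - \Delta D = d(\Delta)$ on $\mathrm{Dom}(D)$ together with a careful estimate of $[\Delta, (1 + n^{-1}D^* D)^{-1}]$. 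The core property has two consequences. First, $\overline{\Delta D} = D\Delta - d(\Delta)$ on $\mathrm{Dom}(D\Delta)$, since $\Delta D = D\Delta - d(\Delta)$ on the core and $d(\Delta)$ is bounded, so by the adjoint identity above
\[
(D^*\Delta)^* = (\Delta D)^{**} = D\Delta - d(\Delta),
\]
and taking one more adjoint (using $d(\Delta)^* = -d(\Delta)$) yields $(D\Delta)^* = D^*\Delta - d(\Delta)$ with $\mathrm{Dom}((D\Delta)^*) = \mathrm{Dom}(D^*\Delta)$. Second, the symmetry of $D$ on $\mathrm{Dom}(D)$ together with $d(\Delta)^* = -d(\Delta)$ gives $\langle \tilde{D}\xi, \eta\rangle = \langle \xi, \tilde{D}\eta\rangle$ for $\xi,\eta \in \mathrm{Dom}(D)$, so extension by density makes $\tilde{D}$ symmetric on $\mathrm{Dom}(D\Delta)$.

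The remaining task is to upgrade $\tilde{D}$ from symmetric to regular selfadjoint, which by Woronowicz's criterion reduces to showing that the ranges of $\tilde{D} \pm i$ are dense in $E$. Since $\tilde{D}$ is a bounded perturbation of $D\Delta$, the natural route is via resolvent manipulations using $(1+D^*D)^{-1}$ and the regularity of the original $D$. Once $\tilde{D}$ is regular and selfadjoint, we have $\mathrm{Dom}(\tilde{D}^*) = \mathrm{Dom}(\tilde{D}) = \mathrm{Dom}(D\Delta)$; combined with $\mathrm{Dom}((D\Delta)^*) = \mathrm{Dom}(\tilde{D}^*) = \mathrm{Dom}(D^*\Delta)$ (the first equality because $D\Delta$ and $\tilde D$ differ by a bounded operator), this forces $\mathrm{Dom}(D^*\Delta) = \mathrm{Dom}(D\Delta)$, hence $D\Delta = D^*\Delta$. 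Substituting into the adjoint formula above yields $(D\Delta)^* = D\Delta - d(\Delta)$, completing the proposition.

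I expect the main obstacle to be the core property in the second paragraph. The regularizers $(1 + n^{-1}D^* D)^{-1}$ do not commute with $\Delta$, so extracting $D\Delta \xi_n \to D\Delta \xi$ requires a delicate commutator estimate that uses both parts of Assumption \ref{a:self}. This is the precise point where the symmetric (rather than selfadjoint) nature of $D$ has to be handled with care: $\Delta$ can only be moved across $D$ or across $D^*$ at the cost of the same bounded twist $d(\Delta)$.
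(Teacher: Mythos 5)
Your algebraic framing is clean and the final reduction is correct: once the ``midpoint'' operator $\tilde D = D\Delta - \tfrac12 d(\Delta)$ is known to be selfadjoint and regular, the domain equalities $\mathrm{Dom}(D\Delta) = \mathrm{Dom}(\tilde D) = \mathrm{Dom}(\tilde D^*) = \mathrm{Dom}(D^*\Delta)$ do force $D\Delta = D^*\Delta$, and the adjoint formula drops out. The difficulty is that both analytic inputs you rely on -- (i) density of $\mathrm{Dom}(D)$ in $\mathrm{Dom}(D\Delta)$ for the graph norm, and (ii) density of the ranges of $\tilde D \pm i$ -- are precisely the content of the proposition, and in your write-up both are deferred (``I would regularize \dots'', ``the natural route is via resolvent manipulations''). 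Without them the argument is a plan, not a proof.

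Concerning (i), the regularization you suggest is not obviously viable in this generality. Assumption~\ref{a:self} gives $\Delta\big(\mathrm{Dom}(D^*)\big)\subseteq\mathrm{Dom}(D)$ but says nothing about $\Delta$ preserving $\mathrm{Dom}(D^*D)$, so the commutator $[\Delta,(1+n^{-1}D^*D)^{-1}]$ cannot be expanded via $[\Delta,D^*D]$ in the standard algebraic way. Estimating $D[\Delta,(1+n^{-1}D^*D)^{-1}]\xi$ directly produces, after using $D\Delta-\Delta D = d(\Delta)$ on $\mathrm{Dom}(D)$, the intermediate term $\Delta D(1+n^{-1}D^*D)^{-1}\xi$, and $\|D(1+n^{-1}D^*D)^{-1}\|_\infty$ grows like $n^{1/2}$; no cancellation is visible that would control this for $\xi$ merely in $\mathrm{Dom}(D\Delta)$. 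So this route would need a substantially more careful commutator manipulation than sketched, and may not close at all.

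The paper avoids both analytic hurdles by a different device. It assembles the selfadjoint, regular $2\times 2$ operator $S = \begin{pmatrix}0 & D^*\\ D & 0\end{pmatrix}$ on $\mathrm{Dom}(D)\oplus\mathrm{Dom}(D^*)$, observes that $\Theta = \begin{pmatrix}0 & \Delta\\ \Delta & 0\end{pmatrix}$ preserves this domain and has bounded commutator $\begin{pmatrix}d(\Delta)&0\\0&d(\Delta)\end{pmatrix}$ with $S$, and then appeals to the earlier result \cite[Section 6]{Kaa:DAH}, which (for a selfadjoint, regular $S$) yields that $S\Theta = \mathrm{diag}(D^*\Delta, D\Delta)$ is regular with adjoint $S\Theta - d(\Theta)$ and with $\mathrm{Dom}(S)$ as core. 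This places $D$ and $D^*$ on an equal footing from the start, produces both core statements simultaneously (namely that $\mathrm{Dom}(D)$ is a core for $D^*\Delta$ and $\mathrm{Dom}(D^*)$ is a core for $D\Delta$), and reduces the remaining work to the two elementary inclusions $D^*\Delta\subseteq D\Delta$ and $D\Delta\subseteq D^*\Delta$ obtained from $(D\Delta)(\xi)=(D^*\Delta)(\xi)$ for $\xi\in\mathrm{Dom}(D^*)$. If you want to make your approach work you would essentially have to re-derive that lemma from \cite{Kaa:DAH}; as written, the proposal has a genuine gap at exactly that point.
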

\begin{proof}
We first claim that the unbounded operators $D\De : \Tex{Dom}(D\De) \to E$ and $D^* \De : \Tex{Dom}(D^* \De) \to E$ are regular with cores $\Tex{Dom}(D^*)$ and $\Tex{Dom}(D)$, respectively, and with adjoints
\[
(D \De)^* = D\De - d(\De) \q \Tex{and} \q (D^* \De)^* = D^* \De - d(\De) \, .
\]

To prove this claim, we recall that $D : \Tex{Dom}(D) \to E$ is regular by assumption, and we thus have that 
\[
\ma{cc}{0 & D^* \\ D & 0} : \Tex{Dom}(D) \op \Tex{Dom}(D^*) \to E \op E
\]
is selfadjoint and regular. Moreover, we have that
\[
\ma{cc}{0 & \De \\ \De & 0} \big( \Tex{Dom}(D) \op \Tex{Dom}(D^*) \big) \su \Tex{Dom}(D) \op \Tex{Dom}(D^*)
\]
and the identities
\[
\begin{split}
\big[ \ma{cc}{0 & D^* \\ D & 0} , \ma{cc}{0 & \De \\ \De & 0} \big]
& = \ma{cc}{ D \De - \De D & 0 \\ 0 & D \De - \De D^* } \\
& = \ma{cc}{d(\De) & 0 \\ 0 & d(\De)}
\end{split}
\]
hold on $\Tex{Dom}(D) \op \Tex{Dom}(D^*)$. This means that $\ma{cc}{0 & D^* \\ D & 0}$ and $\ma{cc}{0 & \De \\ \De & 0}$ satisfy the conditions of \cite[Section 6]{Kaa:DAH} and we may conclude that
\[
\begin{split}
\ma{cc}{0 & D^* \\ D & 0} \ma{cc}{0 & \De \\ \De & 0} & = \ma{cc}{D^* \De & 0 \\ 0 & D \De}  \\
& \q : \Tex{Dom}(D^* \De) \op \Tex{Dom}(D\De) \to E \op E
\end{split}
\]
is a regular unbounded operator with
\[
\begin{split}
\ma{cc}{D^* \De & 0 \\ 0 & D \De}^* & = \ma{cc}{D^* \De & 0 \\ 0 & D \De} - \ma{cc}{d(\De) & 0 \\ 0 & d(\De)} \\
& \q : \Tex{Dom}(D^* \De) \op \Tex{Dom}(D \De ) \to E \op E
\end{split}
\]
Moreover, we know that $\ma{cc}{D^* \De & 0 \\ 0 & D \De} : \Tex{Dom}(D^* \De) \op \Tex{Dom}(D\De) \to E \op E$ has $\Tex{Dom}(D) \op \Tex{Dom}(D^*)$ as a core. This proves the claim.

To end the proof of the proposition, it now suffices to prove that $D\De = D^* \De$. To this end, we notice that
\begin{equation}\label{eq:dom}
(D^* \De)(\xi) = (D\De)(\xi) \q \Tex{for all } \xi \in \Tex{Dom}(D^*)
\end{equation}
Since $\Tex{Dom}(D) \su \Tex{Dom}(D^*)$ is a core for $D^* \De$ we obtain from Equation \eqref{eq:dom} that $D^* \De \su D\De$. Moreover, since $\Tex{Dom}(D^*)$ is a core for $D\De$ we also obtain from Equation \eqref{eq:dom} that $D\De \su D^* \De$. We conclude that $D\De = D^* \De$.
\end{proof}

\begin{assu}\label{a:loca}
It will be assumed that $x : E \to E$ is a bounded adjointable operator such that
\begin{enumerate}
\item $x\big( \Tex{Dom}(D^*) \big) \su \Tex{Dom}(D)$ and $x^* \big( \Tex{Dom}(D^*) \big) \su \Tex{Dom}(D)$;
\item $D x - x D \Tex{ and } D x^* - x^* D : \Tex{Dom}(D) \to E$ extend to bounded operators $d(x) \Tex{ and } d(x^*) : E \to E$, respectively.
\end{enumerate}
\end{assu}

As above, $d(x)$ and $d(x^*) : E \to E$ are automatically adjointable with $d(x)^* = - d(x^*)$. Moreover, $d(x)$ and $d(x^*)$ are bounded extensions of $D x - x D^* \Tex{ and } D x^* - x^* D^* : \Tex{Dom}(D^*) \to E$, respectively.

We define the \emph{localization} of $E$ (with respect to $x : E \to E$) as the Hilbert $C^*$-submodule $E_x \su E$ given by the norm-closure of the image of $x$:
\[
E_x := \Tex{cl}( \Tex{Im}(x) ) \, .
\]

We define $\De := x x^* : E \to E$.

\begin{lma}\label{l:self}
Suppose that the conditions of Assumption \ref{a:loca} are satisfied. Then the unbounded operator
\[
D \De - d(x) x^* : \Tex{Dom}(D \De) \to E
\]
is selfadjoint and regular and it has $\Tex{Dom}(D) \subseteq \Tex{Dom}(D \De)$ as a core. Moreover, we have that
\[
( D \De - d(x) x^* )(\xi) = (x D x^*)(\xi) \, ,
\]
for all $\xi \in \Tex{Dom}(D x^*) \su \Tex{Dom}(D \De)$.
\end{lma}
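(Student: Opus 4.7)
The strategy is to reduce the statement to Proposition~\ref{p:reg} applied with $\De := x x^*$, and then to read off selfadjointness of $D\De - d(x) x^*$ from the formula for $(D\De)^*$ that proposition supplies. I would first check that $\De = xx^*$ meets Assumption~\ref{a:self}. For condition~(1), if $\xi \in \Tex{Dom}(D^*)$ then Assumption~\ref{a:loca}(1) gives $x^*\xi \in \Tex{Dom}(D) \su \Tex{Dom}(D^*)$, and a second application yields $\De\xi = x(x^*\xi) \in \Tex{Dom}(D)$. For condition~(2), I would compute on $\Tex{Dom}(D)$ using the Leibniz-type splitting
\[
D\De - \De D = (Dx - xD)x^* + x(Dx^* - x^*D),
\]
which identifies $d(\De) = d(x) x^* + x\, d(x^*)$ as the required bounded adjointable extension.

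With Assumption~\ref{a:self} verified, Proposition~\ref{p:reg} yields that $D\De : \Tex{Dom}(D\De) \to E$ is regular, that $\Tex{Dom}(D)$ is a core, and that $(D\De)^* = D\De - d(\De)$. Writing $T := D\De - d(x) x^*$, selfadjointness is then a short algebraic check: using $d(x)^* = -d(x^*)$ from the remark after Assumption~\ref{a:loca}, one has $(d(x) x^*)^* = -x\, d(x^*)$, so
\[
T^* = (D\De)^* + x\, d(x^*) = D\De - d(x) x^* - x\, d(x^*) + x\, d(x^*) = T.
\]
Regularity of $T$ and the fact that $\Tex{Dom}(D)$ remains a core both transfer from $D\De$ because $T$ differs from $D\De$ by the bounded adjointable operator $d(x) x^*$.

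For the pointwise identification, let $\xi \in \Tex{Dom}(Dx^*)$, which means $x^*\xi \in \Tex{Dom}(D)$. Assumption~\ref{a:loca}(1) then gives $\De\xi = x(x^*\xi) \in \Tex{Dom}(D)$, so in particular $\xi \in \Tex{Dom}(D\De)$. Since $x^*\xi \in \Tex{Dom}(D)$, the operator $d(x)$ restricts on $x^*\xi$ to the genuine commutator $Dx - xD$, so
\[
d(x)(x^*\xi) = D(xx^*\xi) - x\, D(x^*\xi) = D\De\, \xi - x D x^* \xi,
\]
and rearranging yields $T\xi = D\De\, \xi - d(x) x^* \xi = x D x^* \xi$, as claimed.

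The main obstacle I foresee is purely a matter of bookkeeping of domains: one must invoke Assumption~\ref{a:loca}(1) at the right moment and keep in mind the inclusion $\Tex{Dom}(D) \su \Tex{Dom}(D^*)$ so that the compositions $D x x^*$, $x D x^*$, and $D x^*$ are all actually defined where used. Once Proposition~\ref{p:reg} supplies the correct formula for $(D\De)^*$, the remainder consists of algebraic manipulations with bounded adjointable operators.
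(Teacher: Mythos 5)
Your proof is correct and follows essentially the same route as the paper: verify that $\De = xx^*$ satisfies Assumption~\ref{a:self}, invoke Proposition~\ref{p:reg} to obtain regularity, the core, and the formula $(D\De)^* = D\De - d(x)x^* - x\,d(x^*)$, and then conclude by the bounded adjointable perturbation argument together with the algebraic identity $(d(x)x^*)^* = -x\,d(x^*)$. The only difference is that you spell out the verification of Assumption~\ref{a:self} and the final pointwise identification, which the paper dismisses as ``clearly'' and ``obvious''.
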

\begin{proof}
Clearly, $\De = x x^* : E \to E$ satisfied the conditions of Assumption \ref{a:self} and it therefore follows from Proposition \ref{p:reg} that $D \De : \Tex{Dom}(D \De) \to E$ is regular with core $\Tex{Dom}(D)$ and that
\[
(D \De)^* = D \De - d(\De) = D \De - d(x) x^* - x d(x^*) \, .
\]
Since $d(x)x^* : E \to E$ is a bounded adjointable operator, it follows by \cite[Section 2, Example 1]{Wor:UAQ} that $D \De - d(x) x^* : \Tex{Dom}(D \De) \to E$ is regular. It is moreover clear that $\Tex{Dom}(D)$ is also a core for $D \De - d(x) x^*$ and that
\[
\begin{split}
( D \De - d(x) x^* )^* & = (D \De )^* - ( d(x) x^* )^* \\ 
& = D \De - d(x) x^* - x d(x^*) + x d(x^*) = D \De - d(x) x^*  \, ,
\end{split}
\]
proving that our unbounded operator is selfadjoint as well. The final statement of the lemma is obvious.
\end{proof}

\begin{defn}\label{d:localiz}
Suppose that the conditions of Assumption \ref{a:loca} are satisfied. We define the \emph{localization} of $D : \Tex{Dom}(D) \to E$ (with respect to $x : E \to E$) as the closure of the unbounded symmetric operator
\[
x D x^* : \Tex{Dom}(D) \cap E_x \to E_x \, .
\]
The localization of $D$ is denoted by
\[
D_x : \Tex{Dom}(D_x) \to E_x \, .
\]
\end{defn}

Remark that $x\big( \Tex{Dom}(D) \big) \su \Tex{Dom}(D) \cap E_x$, implying that the localization $D_x$ is densely defined.

\begin{lma}\label{l:inv}
Suppose that the conditions of Assumption \ref{a:loca} are satisfied and let $r \in \rr$ with $|r| > \| d(x^*) x \|_\infty$ be given. Then $i r + Dx^* x : \Tex{Dom}(D x^* x) \to E$ is a bijection and the resolvent is a bounded adjointable operator $(i r + D x^* x)^{-1} : E \to E$ satisfying the relation
\begin{equation}\label{eq:resima}
(  i r +  D \De - d(x) x^* )^{-1} x = x ( i r +  D x^* x)^{-1} \, .
\end{equation}
\end{lma}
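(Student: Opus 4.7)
My strategy is to present $Dx^* x$ as a bounded adjointable perturbation of a selfadjoint regular operator, invert the resolvent of the selfadjoint piece via the functional calculus, and close the argument with a Neumann-series perturbation; the intertwining identity \eqref{eq:resima} will then drop out of a direct commutator computation on $\Tex{Dom}(Dx^* x)$.

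First I would verify that $\De' := x^* x$ satisfies Assumption \ref{a:self} (both clauses follow by combining the two clauses of Assumption \ref{a:loca}), and then apply Proposition \ref{p:reg} to obtain that $Dx^* x$ is regular with core $\Tex{Dom}(D)$ and adjoint $(Dx^* x)^* = Dx^* x - d(x^* x)$, where $d(x^* x) := d(x^*) x + x^* d(x)$. The decisive observation is that $d(x^* x)$ is \emph{anti-selfadjoint}: since $d(x)^* = -d(x^*)$ one has $d(x^* x) = d(x^*) x - (d(x^*) x)^*$, and in particular $\| d(x^* x)\|_\infty \leq 2 \| d(x^*) x\|_\infty$.

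I would then put $T := Dx^* x - \tfrac{1}{2} d(x^* x)$. As a bounded adjointable perturbation of the regular operator $Dx^* x$, the operator $T$ is regular with $\Tex{Dom}(T) = \Tex{Dom}(Dx^* x)$, and the anti-selfadjointness of $d(x^* x)$ yields $T^* = T$, so that $T$ is selfadjoint regular. Functional calculus then produces $(ir + T)^{-1} \in \mathbb{L}(E)$ with $\|(ir + T)^{-1}\|_\infty \leq 1/|r|$ for every $r \in \R \setminus \{0\}$. For $|r| > \| d(x^*) x\|_\infty$ we obtain
\[
\| \tfrac{1}{2} d(x^* x)(ir + T)^{-1}\|_\infty \leq \frac{\| d(x^*) x\|_\infty}{|r|} < 1,
\]
so $1 + \tfrac{1}{2} d(x^* x)(ir + T)^{-1} \in \mathbb{L}(E)$ is invertible via a Neumann series. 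The identity $ir + Dx^* x = \bigl( 1 + \tfrac{1}{2} d(x^* x)(ir + T)^{-1}\bigr)(ir + T)$ on $\Tex{Dom}(Dx^* x)$ then exhibits $ir + Dx^* x$ as a bijection $\Tex{Dom}(Dx^* x) \to E$ with bounded adjointable inverse $(ir + T)^{-1}\bigl( 1 + \tfrac{1}{2} d(x^* x)(ir + T)^{-1}\bigr)^{-1}$.

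For \eqref{eq:resima}, given $\xi \in \Tex{Dom}(Dx^* x)$ put $\mu := x^* x \xi \in \Tex{Dom}(D) \su \Tex{Dom}(D^*)$. Assumption \ref{a:loca} then gives $x \mu \in \Tex{Dom}(D)$, so $x \xi \in \Tex{Dom}(D \De)$, and the commutator identity $Dx \mu = xD\mu + d(x)\mu$ on $\mu \in \Tex{Dom}(D)$ yields $(D\De - d(x) x^*)(x \xi) = xDx^* x \, \xi$. Adding $i r x \xi$ on both sides gives $(ir + D\De - d(x) x^*)(x \xi) = x(ir + Dx^* x)\xi$, and substituting $\xi = (ir + Dx^* x)^{-1}\eta$ for $\eta \in E$ and applying $(ir + D\De - d(x) x^*)^{-1}$ on the left produces \eqref{eq:resima}. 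The main technical point I anticipate is the initial splitting $Dx^* x = T + \tfrac{1}{2} d(x^* x)$ into a selfadjoint and an anti-selfadjoint piece; this is precisely what pins down the threshold $\| d(x^*) x\|_\infty$ in the hypothesis, rather than the cruder $\tfrac{1}{2}\| d(x^* x)\|_\infty$, as the constant required for the Neumann series to converge.
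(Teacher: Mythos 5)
Your proof is correct and follows essentially the same strategy as the paper: exhibit $ir + Dx^*x$ as a Neumann-series perturbation of the resolvent of a selfadjoint regular operator, then deduce \eqref{eq:resima} from the algebraic intertwining $(D\De - d(x)x^*)\,x = x\,Dx^*x$ on $\Tex{Dom}(Dx^*x)$. The one difference is the choice of selfadjoint ``anchor'': the paper replaces $x$ by $x^*$ in Assumption~\ref{a:loca} and invokes Lemma~\ref{l:self} directly, yielding the selfadjoint regular operator $Dx^*x - d(x^*)x$ with perturbation $d(x^*)x$, whereas you symmetrize from scratch via Proposition~\ref{p:reg}, taking $T := Dx^*x - \tfrac12 d(x^*x)$ with the skew-adjoint perturbation $\tfrac12 d(x^*x) = \tfrac12\bigl(d(x^*)x - (d(x^*)x)^*\bigr)$. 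Both choices have the same domain, and since $\|\tfrac12 d(x^*x)\|_\infty \leq \|d(x^*)x\|_\infty$, both give a Neumann series valid for $|r| > \|d(x^*)x\|_\infty$; your symmetrized splitting in fact gives the (possibly strictly) finer threshold $\tfrac12\|d(x^*x)\|_\infty$, not a cruder one as your closing remark suggests -- that sentence has the inequality the wrong way round, though it does not affect the proof. The paper's choice is slightly more economical here because it reuses Lemma~\ref{l:self}, which it needs elsewhere anyway; your choice has the small conceptual advantage of isolating the obstruction to selfadjointness as the skew-adjoint part of $d(x^*)x$. The intertwining computation at the end is the same as the paper's, just written in more detail.
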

\begin{proof}
By replacing $x$ with $x^*$ in Assumption \ref{a:loca} we see from Lemma \ref{l:self} that the unbounded operator
\[
D x^* x - d(x^*) x : \Tex{Dom}(D x^* x) \to E 
\]
is selfadjoint and regular. In particular, we know that the resolvent $(i r + D x^* x - d(x^*)x)^{-1} : E \to E$ is a well-defined bounded adjointable operator. Since
\[
\| d(x^*) x (i r + D x^* x - d(x^*) x)^{-1} \|_\infty \leq \| d(x^*) x \|_\infty \cd |r|^{-1} < 1
\]
we may conclude that $ir + D x^* x : \Tex{Dom}(D x^* x) \to E$ is a bijection and that the resolvent is a bounded adjointable operator. In fact, we have that
\[
\begin{split}
(i r + D x^* x)^{-1} & = (i r + D x^* x - d(x^*) x)^{-1} \\ 
& \q \cd \big( 1 + d(x^*) x (i r + D x^* x - d(x^*) x)^{-1} \big)^{-1} \, .
\end{split}
\]
The relation in Equation Equation \eqref{eq:resima} now follows since
\[
(ir + D \De - d(x) x^*) x = (i r + x D x^* ) x = x (ir + D x^* x)
\]
on $\Tex{Dom}(D x^* x)$.
\end{proof}

\begin{prop}\label{p:loca}
Suppose that the conditions of Assumption \ref{a:loca} are satisfied. Then the localization of $D : \Tex{Dom}(D) \to E$ with respect to $x : E \to E$ is a selfadjoint and regular unbounded operator
\[
D_x : \Tex{Dom}(D_x) \to E_x \, ,
\]
with core $x(\Tex{Dom}(D)) \subseteq \Tex{Dom}(D_x)$. Moreover, we have the identity 
\begin{equation}\label{eq:reside}
( i \mu + D_x)^{-1}(\xi) = (i \mu + D \De - d(x) x^*)^{-1}(\xi) \, ,
\end{equation}
for all $\xi \in E_x$ and all $\mu \in \rr\sem \{0\}$. In particular, $E_x \su E$ is an invariant submodule for $(i \mu + D\De - d(x) x^*)^{-1} : E \to E$ for all $\mu \in \rr\sem \{0\}$.
\end{prop}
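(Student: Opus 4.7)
The plan is to realize $D_x$ as the restriction of the selfadjoint regular operator $S := D\Delta - d(x)x^*$ from Lemma \ref{l:self} (with $\Delta = xx^*$) to the invariant submodule $E_x$. First I would use the intertwining identity of Lemma \ref{l:inv} to conclude that for $|r| > \|d(x^*)x\|_\infty$, $(ir+S)^{-1}$ sends $\Tex{Im}(x)$ into $\Tex{Im}(x) \subseteq E_x$; by continuity and the density of $\Tex{Im}(x)$ in $E_x$, this resolvent preserves $E_x$. The same reasoning applied with $-r$ in place of $r$ shows that $(-ir+S)^{-1}$ also preserves $E_x$.

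Next I would introduce the operator $T := S|_{\Tex{Dom}(T)}$, where $\Tex{Dom}(T) := (ir+S)^{-1}(E_x) = \{\xi \in \Tex{Dom}(S) \cap E_x : S\xi \in E_x\}$. Symmetry of $T$ is inherited from the selfadjointness of $S$, and density of $\Tex{Dom}(T)$ in $E_x$ follows from the injectivity of $(-ir+S)^{-1}|_{E_x}$, which is the adjoint of the bounded adjointable operator $(ir+S)^{-1}|_{E_x}$. By construction $T \pm ir$ are bijections onto $E_x$ with bounded adjointable inverses $(\pm ir + S)^{-1}|_{E_x}$, so $T$ is selfadjoint and regular on $E_x$ (cf.\ \cite[Section 2, Example 1]{Wor:UAQ}).

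The heart of the proof is to identify $T$ with $D_x$. For $\xi \in \Tex{Dom}(D) \cap E_x$, Assumption \ref{a:loca}(1) gives $\xi \in \Tex{Dom}(Dx^*)$, so Lemma \ref{l:self} yields $S\xi = xDx^*\xi \in \Tex{Im}(x) \subseteq E_x$; hence $xDx^*|_{\Tex{Dom}(D) \cap E_x} \subseteq T$ and, by closedness of $T$, $D_x \subseteq T$. For the reverse inclusion I would show that $x(\Tex{Dom}(D))$ is a core for $T$. For $\xi \in \Tex{Dom}(D)$ the identity
\[
(ir + T)(x\xi) = x(ir + Dx^*x)\xi
\]
holds, and applying Lemma \ref{l:self} with the roles of $x$ and $x^*$ interchanged shows that $Dx^*x - d(x^*)x$ is selfadjoint regular with core $\Tex{Dom}(D)$; the bounded perturbation by $d(x^*)x$ transfers this core property to $Dx^*x$ itself. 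Therefore $(ir+Dx^*x)(\Tex{Dom}(D))$ is dense in $E$, so $(ir+T)(x(\Tex{Dom}(D)))$ is dense in $\Tex{Im}(x)$ and hence in $E_x$. Combining this with the continuity of $(ir+T)^{-1}$ promotes the approximation to the graph norm of $T$, giving $T \subseteq D_x$, and hence $D_x = T$ with $x(\Tex{Dom}(D))$ as a core.

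Finally, the resolvent identity \eqref{eq:reside} is immediate: for $\xi \in E_x$ and $\mu \in \R \setminus \{0\}$, put $\eta := (i\mu + D_x)^{-1}\xi \in \Tex{Dom}(D_x)$; since $D_x = T \subseteq S$, we have $(i\mu+S)\eta = (i\mu+D_x)\eta = \xi$, so $\eta = (i\mu+S)^{-1}\xi \in E_x$, which simultaneously proves invariance of $E_x$ under every $(i\mu+S)^{-1}$. The main obstacle I anticipate is the bookkeeping across the three operators $S$, $T$, and $D_x$ and their interlocking domains; in particular, verifying that the explicit core $x(\Tex{Dom}(D))$ suffices requires a joint use of Lemma \ref{l:self} with $x$ and $x^*$ swapped and the intertwining of Lemma \ref{l:inv}.
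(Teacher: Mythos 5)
Your argument takes a genuinely different route from the paper. The paper verifies the abstract selfadjointness criterion (Lance, Lemma 9.7--9.8) head-on: it shows $ir + xDx^*$ has dense range on $x(\Tex{Dom}(D))$, feeding in the core statement for $Dx^*x$ from Proposition~\ref{p:reg} together with the intertwining of Lemma~\ref{l:inv}, and then reads off the resolvent identity \eqref{eq:reside}. You instead realize $D_x$ as a restriction of the globally defined selfadjoint operator $S = D\De - d(x)x^*$ of Lemma~\ref{l:self} to the invariant submodule $E_x$, with an explicit intermediate operator $T := S|_{\Tex{Dom}(T)}$, and then pin down $T = D_x$ by a two-sided inclusion. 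You use the same ingredients (Lemma~\ref{l:inv} and the core property of $Dx^*x$), so the bookkeeping is somewhat heavier, but the compensation is that the ``restriction of $S$ to $E_x$'' picture, which the paper only encodes implicitly through \eqref{eq:reside}, becomes explicit.

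There is one genuine, though easily repaired, gap. You deduce that $\Tex{Dom}(T)$ is dense in $E_x$ from the injectivity of $(-ir+S)^{-1}|_{E_x}$, noting that this is the adjoint of $(ir+S)^{-1}|_{E_x}$. In a Hilbert $C^*$-module, injectivity of the adjoint of a bounded adjointable operator does \emph{not} in general imply that the operator has dense range: the argument would need $(M^\perp)^\perp = M$ for closed submodules $M$, which fails outside the Hilbert-space setting. The repair is the observation you already make later for the core property: $x(\Tex{Dom}(D)) \subseteq \Tex{Dom}(T)$ (for $\xi \in \Tex{Dom}(D)$ one has $x\xi \in \Tex{Dom}(S) \cap E_x$ and $S(x\xi) = xDx^*(x\xi) \in \Tex{Im}(x) \subseteq E_x$), and $x(\Tex{Dom}(D))$ is norm-dense in $\Tex{Im}(x)$, hence in $E_x$. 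With this substitution the rest of your proof goes through as written.
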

\begin{proof}
To show that $D_x : \Tex{Dom}(D_x) \to E_x$ is selfadjoint and regular, it suffices to verify that
\[
i r + x D x^* : x\big( \Tex{Dom}(D) \big) \to E_x
\]
has dense image whenever $r \in \rr$ satisfies $|r| > \| d(x^*) x \|_\infty$, see \cite[Lemma 9.7 and Lemma 9.8]{Lan:HMT}. Let such an $r \in \rr$ be given.

Clearly, $x^* x : E \to E$ satisfies the condition of Assumption \ref{a:self} and it therefore follows from Proposition \ref{p:reg} that $D x^* x : \Tex{Dom}(D x^* x) \to E$ is regular with core $\Tex{Dom}(D) \su E$. Combining this with Lemma \ref{l:inv} we may find a norm-dense submodule $\sE \su E$ such that
\[
(i r + D x^* x)^{-1}( \sE) = \Tex{Dom}(D) \, .
\]
Moreover, we have that
\[
(i r + x D x^*) x (i r + D x^* x)^{-1}(\xi) = x( \xi ) \q \Tex{for all } \xi \in \sE \, .
\]
Since $x(\sE) \su E_x$ is norm-dense and $x(ir + D x^* x)^{-1}(\sE) = x\big( \Tex{Dom}(D) \big)$, this proves the desired density result and hence that the localization $D_x : \Tex{Dom}(D_x) \to E_x$ is selfadjoint and regular.

Let $\mu \in \rr\sem\{0\}$. The identity in Equation \eqref{eq:reside} can now be verified on the image of $i \mu + x D x^* : x\big( \Tex{Dom}(D) \big) \to E_x$, but here it follows immediately since $(x D x^*)(\xi) = (D \De - d(x) x^*)(\xi)$ for all $\xi \in x\big( \Tex{Dom}(D) \big)$.
\end{proof}

\begin{rem}
The result of Proposition \ref{p:loca} can be generalized by replacing the bounded adjointable operator $x : E \to E$ by a sequence of bounded adjointable operators $x_n : E \to E$, $n \in \nn$, each of them satisfying the conditions of Assumption \ref{a:loca}. Suppose moreover that the sums
\[
\sum_{n = 1}^\infty x_n x_n^* \q \Tex{and} \q \sum_{n = 1}^\infty d(x_n) d(x_n)^*
\] 
are norm-convergent in $\mathbb{L}(E)$ (this can of course always be obtained by rescaling the operators $x_n : E \to E$, $n \in \nn$). 

In this context, we define the localization of $E$ with respect to the sequence $x = \{x_n\}$ as the closed submodule
\[
E_x := \Tex{cl}\big( \Tex{span}_{\cc}\{ x_n (\xi) \mid n \in \nn, \, \xi \in E \} \big) \su E \, .
\]
The localization $D_x$ of $D : \Tex{Dom}(D) \to E$ is defined as the closure of the symmetric unbounded operator
\[
\sum_{n = 1}^\infty x_n D x_n^* :  \Tex{Dom}(D) \cap E_x \to E_x  \, .
\]
As in Proposition \ref{p:loca}, we then obtain that $D_x : \Tex{Dom}(D_x) \to E_x$ is a selfadjoint and regular unbounded operator.
\end{rem}




\section{Localization of half-closed chains}\label{s:locahalf}
Let $A$ and $B$ be $\si$-unital $C^*$-algebras. Throughout this section $(\sA,E,D)$ will be a half-closed chain from $A$ to $B$. We denote by $\phi : A \to \mathbb{L}(E)$ the $*$-homomorphism that provides the left action of $A$ on $E$. Moreover, $x \in \sA$ will be a fixed element.

Notice that $\phi(x) : E \to E$ satisfies the condition of Assumption \ref{a:loca} with respect to the symmetric and regular unbounded operator $D : \Tex{Dom}(D) \to E$. Recall then that the localization of $E$ is the norm-closed submodule $E_x := \Tex{cl}\big( \Tex{Im}( \phi(x)) \big) \su E$ and that the localization $D_x$ of $D : \Tex{Dom}(D) \to E$ is the closure of the symmetric unbounded operator
\[
\phi(x) D \phi(x^*) : \Tex{Dom}(D) \cap E_x \to E_x \, .
\]
By Proposition \ref{p:loca}, the localization $D_x : \Tex{Dom}(D_x) \to E_x$ is selfadjoint and regular. We put
\[
\De := x x^* \in \sA \, .
\]

By definition, the {\it localization} of $A$ with respect to $x \in A$ is the hereditary $C^*$-subalgebra of $A$ defined by
\[
A_x := \Tex{cl}( x A x^*) \su A \, .
\]
The $*$-homomorphism $\phi : A \to \mathbb{L}(E)$ restricts to a $*$-homomorphism $\phi_x : A_x \to \mathbb{L}(E_x)$ and in this way $E_x$ becomes a $C^*$-correspondence from $A_x$ to $B$. We remark that $\De \in A_x$ and that $\phi_x(\De) : E_x \to E_x$ is a bounded positive and selfadjoint operator with norm-dense image. 

We define the $*$-subalgebra $\sA_x \su A_x$ as the intersection
\[
\sA_x := \sA \cap A_x \, .
\]
Remark that $\sA_x \su A_x$ is automatically norm-dense.

When the half-closed chain $(\sA,E,D)$ is even with $\zz/2\zz$-grading operator $\ga : E \to E$, then $E_x$ can be equipped with the $\zz/2\zz$-grading operator $\ga|_{E_x} : E_x \to E_x$ obtained by restriction of $\ga : E \to E$.

We are going to prove the following:

\begin{thm}\label{t:halfmodu}
Suppose that $(\sA,E,D)$ is a half-closed chain and that $x$ is an element in $\sA$. Then the triple $(E_x, D_x, \phi_x(\De) )$ is an unbounded modular cycle from $\sA_x$ to $B$ of the same parity as $(\sA,E,D)$ and with grading operator $\ga|_{E_x}: E_x \to E_x$ in the even case.
\end{thm}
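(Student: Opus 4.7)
The plan is to verify, one by one, the four axioms of Definition \ref{d:modular} for $(E_x, D_x, \phi_x(\De))$. The key tools are Proposition \ref{p:loca}, which identifies $(i\mu + D_x)^{-1}\xi$ on $E_x$ with $(i\mu + T)^{-1}\xi$, where $T := D\De - d(x)x^*$ is the selfadjoint regular operator on $E$ supplied by Lemma \ref{l:self}; and Lemma \ref{l:inv}, which provides the intertwining $(ir + T)^{-1}\phi(x) = \phi(x)(ir + Dx^*x)^{-1}$ for $|r|$ large. Taking adjoints and using selfadjointness of $T$, one obtains a companion identity $\phi(x^*)(ir + T)^{-1} = (ir + T')^{-1}\phi(x^*)$ with $T' := Dx^*x - d(x^*)x$ again selfadjoint regular. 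Together with the same lemma applied with $x^*$ in place of $x$, namely $(ir + T')^{-1}\phi(x^*) = \phi(x^*)(ir + D\De)^{-1}$, these identities form the resolvent algebra used throughout.

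I begin with conditions (2)--(4). For (2), fix $a \in \sA_x$ and $\la \in \C$. Given $\xi \in \dom(D_x) \subseteq \dom(D\De)$, one has $\De\xi \in \dom(D) \subseteq \dom(D^*)$; since $a \in \sA$, the half-closed chain axiom yields $(a+\la)\De\xi \in \dom(D) \subseteq \dom(D^*)$, and then $\De(a+\la)\De\xi \in \dom(D)$ by Assumption \ref{a:self}, so $(a+\la)\phi_x(\De)\xi \in \dom(D_x)$. On the core $\phi(x)(\dom(D)) \subseteq \dom(D_x)$, expanding $D_x = D\De - d(x)x^*$ together with the factorization $\De (a+\la)\De = \phi(x)\phi(x^*(a+\la)x)\phi(x^*)$ and applying the Leibniz rule for $d$, several terms appear, but the $d(x)x^*$-contributions cancel pairwise, leaving
\[
D_x (a+\la)\phi_x(\De) - \phi_x(\De)(a+\la) D_x = \phi(x)\, d(x^*(a+\la)x)\, \phi(x^*)
\]
on the core. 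The right-hand side is bounded adjointable with range in $E_x$, and extends by continuity to the desired $d_\De(a,\la)$ on all of $\dom(D_x)$. Condition (3) is then immediate from the $C^*$-algebra estimate $\|(\phi(xx^*)^{1/2}+\eps)^{-1}\phi(x)\|_\infty \leq 1$, uniform in $\eps > 0$, which follows from the functional-calculus identity $f(xx^*)\phi(x) = \phi(x) f(x^*x)$ applied with $f(t) = (t^{1/2}+\eps)^{-1}$. For (4), setting $g_n(t) := t/(t+1/n)$, the same identity gives $\|g_n(\phi(xx^*))\phi(x) - \phi(x)\|^2 \leq 1/(4n) \to 0$; multiplying on the right by $\phi(bx^*)$ and using density of $xAx^*$ in $A_x$ extends this to $\|g_n(\phi_x(\De))\phi_x(a) - \phi_x(a)\|_\infty \to 0$ for every $a \in A_x$.

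Condition (1) is the main obstacle. By density of $xAx^*$ in $A_x$ and norm-closedness of $\mathbb{K}(E_x)$, it suffices to prove $\phi_x(xbx^*)(ir + D_x)^{-1} \in \mathbb{K}(E_x)$ for $b \in A$ and a single sufficiently large $r \in \R$. Using Proposition \ref{p:loca} and then successive applications of the two intertwining identities to push $\phi(x^*)$ through both $(ir + T)^{-1}$ and $(ir + T')^{-1}$, the operator $\phi_x(xbx^*)(ir + D_x)^{-1}|_{E_x}$ is rewritten as the restriction to $E_x$ of $\phi(xbx^*)(ir + D\De)^{-1}$. A resolvent-identity comparison with $(1+D^*D)^{-1}$, exploiting the regularity of $D$ to ensure boundedness of $D\De(1+D^*D)^{-1}$ (hence of $(ir+D\De)(1+D^*D)^{-1}$, which becomes an invertible bounded perturbation of a multiple of the identity for $|r|$ large), exposes $\phi(c)(1+D^*D)^{-1}$ as a left factor of the expression for some $c \in A$. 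The half-closed chain compactness axiom $\phi(c)(1+D^*D)^{-1} \in \mathbb{K}(E)$ then yields compactness in $\mathbb{K}(E)$, and restriction to the submodule $E_x$ produces the required element of $\mathbb{K}(E_x)$. Finally, in the even case the grading statement is automatic: $\ga|_{E_x}$ commutes with $\phi_x(A_x)$ and anticommutes with $\phi(x)D\phi(x^*)$ on the core $\phi(x)(\dom(D))$, hence with $D_x$ by passing to the closure.
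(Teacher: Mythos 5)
Your treatment of conditions (2), (3), (4) and the grading is correct and essentially matches the paper's argument (Propositions \ref{p:twist}, \ref{p:supremum} and the direct verification of (4) in the proof of Theorem \ref{t:halfmodu}). The formula $d_\De(a,\la) = \big(\phi(x) d(x^*(a+\la)x)\phi(x^*)\big)|_{E_x}$ and the bound $\|(\phi_x(\De)^{1/2}+\eps)^{-1}\phi(x)\|_\infty\le 1$ are exactly what the paper uses.

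Your proof of the compactness condition (1) has a genuine gap, and the route you sketch does not work as stated. First, the ``companion identity'' $\phi(x^*)(ir+T)^{-1}=(ir+T')^{-1}\phi(x^*)$ with $T'=Dx^*x-d(x^*)x$ is false. Taking the adjoint of Lemma~\ref{l:inv} and using $T^*=T$ gives $\phi(x^*)(-ir+T)^{-1}=\big(-ir+(Dx^*x)^*\big)^{-1}\phi(x^*)$, and by Proposition~\ref{p:reg} one has $(Dx^*x)^*=Dx^*x-d(x^*x)=Dx^*x-d(x^*)x-x^*d(x)$, which carries an extra $-x^*d(x)$ and is not the selfadjoint operator $T'$. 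Second, even if you could reach some expression of the form $\phi(xbx^*)(ir+D\De)^{-1}$, the claimed ``resolvent-identity comparison'' with $(1+D^*D)^{-1}$ does not yield anything: $(ir+D\De)(1+D^*D)^{-1}=ir(1+D^*D)^{-1}+D\De(1+D^*D)^{-1}$ is \emph{not} an invertible bounded perturbation of a multiple of the identity for large $|r|$, because $(1+D^*D)^{-1}$ has spectrum accumulating at $0$ whenever $D$ is unbounded, so $ir(1+D^*D)^{-1}$ stays far from $ir$. The real obstruction you are skirting is that the half-closed chain axiom only gives compactness of $\phi(a)(1+D^*D)^{-1}$ (and of $\phi(a)(1+DD^*)^{-1}$), and $D$ itself is not selfadjoint, so one cannot compare the resolvent of the selfadjoint $T=D\De-d(x)x^*$ with a scalar resolvent of $D$ directly. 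The paper resolves this by passing to the selfadjoint $2\times 2$ matrix $\begin{pmatrix}0 & D^*\\ D & 0\end{pmatrix}$ and proving the matrix resolvent identity of Lemma~\ref{l:locareso}, which exposes $\begin{pmatrix}\phi(\De)&0\\0&\phi(\De)\end{pmatrix}\begin{pmatrix}i & D^*\\D & i\end{pmatrix}^{-1}\in\mathbb{K}(E\op E)$ as the left factor, yielding $\phi(\De)^2(i+\wit{D_x})^{-1}\in\mathbb{K}(E,E_x)$ and then $\phi_x(\De)(1+D_x^2)^{-1}\phi_x(\De)\in\mathbb{K}(E_x)$ by restriction. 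Without something equivalent to this doubling trick, your argument for (1) cannot be completed.
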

\begin{proof}
Clearly the $C^*$-correspondence $E_x$ is countably generated (since $E$ is countably generated by assumption). Moreover, we have already established that the unbounded operator $D_x : \Tex{Dom}(D_x) \to E_x$ is selfadjoint and regular in Proposition \ref{p:loca} and that $\phi_x(\De) : E_x \to E_x$ is bounded positive and selfadjoint with norm-dense image. So it only remains to check conditions $(1)$, $(2)$, $(3)$ and $(4)$ of Definition \ref{d:modular}. The last condition $(4)$ follows immediately since $\De(\De + 1/n)^{-1} a \to a$ in $C^*$-norm for all $a \in A_x$. The remaining three conditions are proved in Proposition \ref{p:locacomp}, Proposition \ref{p:twist} and Proposition \ref{p:supremum} below.
\end{proof}

We will refer to the unbounded modular cycle $(E_x,D_x,\phi_x(\De))$ as the \emph{localization} of the half-closed chain $(E,\phi,D)$ with respect to $x \in \sA$.


We start by proving the compactness condition $(1)$ of Definition \ref{d:modular}. We put 
\[
\wit{D_x} := D \phi(\De) - d( x ) \phi(x^*) : \Tex{Dom}(D \phi(\De)) \to E
\]
and recall that $\wit{D_x}$ is a selfadjoint and regular unbounded operator by Lemma \ref{l:self}. We remark that $\wit{D_x}$ agrees with $D_x$ if and only if the image of $\phi(x) : E \to E$ is norm-dense. In fact, when the image of $\phi(x)$ is not norm-dense then these two unbounded operators do not even act on the same Hilbert $C^*$-module. 


\begin{lma}\label{l:locareso}
We have the resolvent identity
\[
\begin{split}
& \ma{cc}{0 & \phi(\De) \\ \phi(\De) & 0} \ma{cc}{(i + \wit{D_x})^{-1} & 0 \\ 0 & (i + \wit{D_x})^{-1} } 
- \ma{cc}{i & D^* \\ D & i}^{-1} \\
& \q =  \ma{cc}{i & D^* \\ D & i}^{-1} \ma{cc}{ d(x) \phi(x^*) - i & i \phi(\De) \\ i \phi(\De) & d(x) \phi(x^*) - i} (i + \wit{D_x})^{-1} \, .
\end{split}
\]
\end{lma}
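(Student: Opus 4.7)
The plan is to reduce the claimed resolvent identity to a purely algebraic operator identity on a natural dense domain, by multiplying both sides on the left by the bounded adjointable operator $T := \bma{cc}{i & D^* \\ D & i}$ and on the right by the closed regular operator $\tilde{S} := \mathrm{diag}(i+\wit{D_x},\ i+\wit{D_x})$ on $E \op E$. Recall that $\tilde{S}$ is invertible with bounded adjointable resolvent by Lemma~\ref{l:self}, and that $T$ is bounded adjointable and invertible because $\bma{cc}{0 & D^* \\ D & 0}$ is selfadjoint and regular.

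Unraveling in this way, the asserted identity is equivalent to the operator equality
\[
T \ma{cc}{0 & \phi(\De) \\ \phi(\De) & 0} - \tilde{S} = \ma{cc}{d(x)\phi(x^*) - i & i\phi(\De) \\ i\phi(\De) & d(x)\phi(x^*) - i}
\]
on the common domain $\Tex{Dom}(\tilde{S}) = \Tex{Dom}(D\phi(\De)) \op \Tex{Dom}(D\phi(\De))$. I first check that the composition on the left-hand side makes sense on this domain. For $\eta = (\eta_1,\eta_2) \in \Tex{Dom}(\tilde{S})$, the vector $\bma{cc}{0 & \phi(\De) \\ \phi(\De) & 0}\eta = (\phi(\De)\eta_2, \phi(\De)\eta_1)$ lies in $\Tex{Dom}(D)\op\Tex{Dom}(D) \su \Tex{Dom}(D)\op\Tex{Dom}(D^*) = \Tex{Dom}(T)$ by the definition of $\Tex{Dom}(D\phi(\De))$.

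The core of the argument is the identity $D\phi(\De) = D^*\phi(\De)$ from Proposition~\ref{p:reg}. Applied to the upper component of $T\bma{cc}{0 & \phi(\De) \\ \phi(\De) & 0}\eta$, this converts the appearance of $D^*\phi(\De)(\eta_1)$ into $D\phi(\De)(\eta_1)$, which then exactly cancels against the $D\phi(\De)(\eta_1)$ term contained in $\wit{D_x}(\eta_1)$ after we use the definition $\wit{D_x} = D\phi(\De) - d(x)\phi(x^*)$. What survives on the diagonal is $-i + d(x)\phi(x^*)$, and the off-diagonal entries reduce directly to $i\phi(\De)$. This matches the right-hand matrix entry-by-entry, completing the verification.

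Putting this together, for any $\xi \in E \op E$ the vector $\eta := \tilde{S}^{-1}\xi$ lies in $\Tex{Dom}(\tilde{S})$, so applying the reduced identity to $\eta$ and then applying $T^{-1}$ on the left recovers the stated formula. The only delicate point is the domain bookkeeping above, which hinges on the equality $D\phi(\De) = D^*\phi(\De)$ supplied by Proposition~\ref{p:reg}; once this is in hand, the remaining verification is a direct $2\times 2$ block computation.
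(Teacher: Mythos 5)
Your proof is correct and takes essentially the same route as the paper: establish the $2\times 2$ block identity on $\mathrm{Dom}(D\phi(\Delta))\oplus\mathrm{Dom}(D\phi(\Delta))$ using $D\phi(\Delta)=D^*\phi(\Delta)$ from Proposition~\ref{p:reg}, then compose with the bounded resolvents. One small terminological slip: the block operator $T$ with entries $i$, $D$, $D^*$ is an unbounded regular operator, not ``bounded adjointable'' as you twice claim; since your domain bookkeeping is done carefully and you only actually invoke boundedness of $T^{-1}$, the argument nevertheless goes through unchanged.
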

\begin{proof}
It suffices to notice that the identities
\[
\begin{split}
& \ma{cc}{i & D^* \\ D & i} \ma{cc}{0 & \phi(\De) \\ \phi(\De) & 0} - \ma{cc}{i + \wit{D_x} & 0 \\ 0 & i + \wit{D_x}} \\
& \q = \ma{cc}{D \phi(\De) - i - \wit{D_x} & i \phi(\De) \\ i \phi(\De) & D \phi(\De) - i - \wit{D_x}} \\
& \q = \ma{cc}{d( x) \phi(x^*) - i & i \phi(\De) \\ i \phi(\De) & d( x) \phi(x^*) - i}
\end{split}
\]
hold on $\Tex{Dom}(D \phi(\De) ) \op \Tex{Dom}(D \phi(\De))$. Recall in this respect that $D \phi(\De) = D^* \phi(\De)$ by Proposition \ref{p:reg}.
\end{proof}

\begin{prop}\label{p:locacomp}
The bounded adjointable operator
\[
\phi_x(a) ( i + D_x)^{-1} : E_x \to E_x
\]
is compact for all $a \in A_x$.
\end{prop}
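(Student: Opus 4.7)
The plan is to first establish the needed compactness on the ambient module $E$ and then transfer it to $E_x$. By Proposition \ref{p:loca}, the resolvent $(i + D_x)^{-1}$ on $E_x$ is simply the restriction of $R := (i + \wit{D_x})^{-1} \in \mathbb{L}(E)$, and both $\phi(a)$ (for $a \in A_x$) and $R, R^*$ preserve the submodule $E_x$. Since $A_x = \Tex{cl}(xAx^*)$ and $\mathbb{K}(E_x)$ is norm-closed, it suffices to verify the claim for $a = xbx^*$ with $b \in A$.

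The first auxiliary step is to prove $\phi(\Delta)R \in \mathbb{K}(E)$. The $(1,2)$-entry of the resolvent identity of Lemma \ref{l:locareso} yields
\[
\phi(\Delta)R = D^*(1 + DD^*)^{-1} + (1 + D^*D)^{-1}\phi(\Delta)R + D^*(1 + DD^*)^{-1}\bigl(d(x)\phi(x^*) - i\bigr)R .
\]
After left-multiplying by $\phi(b)$ for $b \in A$, each summand is compact on $E$: this is a consequence of axiom $(1)$ of a half-closed chain (and its counterpart $\phi(b)(1+DD^*)^{-1} \in \mathbb{K}(E)$) combined with the standard implication $TT^* \in \mathbb{K}(E) \Rightarrow T \in \mathbb{K}(E)$, which together give both $\phi(b)(1 + D^*D)^{-1}$ and $\phi(b) D^*(1 + DD^*)^{-1}$ in $\mathbb{K}(E)$. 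Hence $\phi(b\Delta)R \in \mathbb{K}(E)$ for every $b \in A$, and passing to a bounded approximate unit of $A$ yields $\phi(\Delta)R \in \mathbb{K}(E)$.

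The second auxiliary step is to upgrade this to the compactness of $Q := \phi(x^*)(1 + \wit{D_x}^2)^{-1}\phi(x) \in \mathbb{L}(E)$. The operator $Q$ is positive and selfadjoint, and
\[
Q^2 = \phi(x^*)\,(1 + \wit{D_x}^2)^{-1}\phi(\Delta)(1 + \wit{D_x}^2)^{-1}\,\phi(x) ,
\]
which is compact because $(1+\wit{D_x}^2)^{-1}\phi(\Delta) = [\phi(\Delta) R R^*]^{*} \in \mathbb{K}(E)$. Continuous functional calculus in the $C^*$-algebra $\mathbb{K}(E)$ then gives $Q = (Q^2)^{1/2} \in \mathbb{K}(E)$.

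Finally, for $a = xbx^*$ let $S := \phi_x(a)(i + D_x)^{-1} \in \mathbb{L}(E_x)$. Since $\phi(a^*)$ and $R^*$ both preserve $E_x$, a direct calculation shows that the adjoint of $S$ in $\mathbb{L}(E_x)$ equals $R^*\phi(a^*)\big|_{E_x}$, and hence
\[
SS^* = \phi(a)(1 + \wit{D_x}^2)^{-1}\phi(a^*)\big|_{E_x} = \phi(x)\bigl[\phi(b)\,Q\,\phi(b^*)\bigr]\phi(x^*)\big|_{E_x} .
\]
The bracket lies in $\mathbb{K}(E)$, and the map $T \mapsto \phi(x) T \phi(x^*)|_{E_x}$ sends $\mathbb{K}(E)$ into $\mathbb{K}(E_x)$: on a rank-one $T = |\eta\rangle\langle\zeta|$ it produces $|\phi(x)\eta\rangle\langle\phi(x)\zeta|$, which is a rank-one in $\mathbb{K}(E_x)$, and the general case follows by norm density. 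Therefore $SS^* \in \mathbb{K}(E_x)$, and the standard equivalence $SS^* \in \mathbb{K}(E_x) \Leftrightarrow S \in \mathbb{K}(E_x)$ completes the argument. The main obstacle throughout is precisely this transfer of compactness from $E$ down to the possibly non-complemented submodule $E_x$; routing the argument through $SS^*$ with its explicit conjugation by $\phi(x), \phi(x^*)$ circumvents the absence of an orthogonal projection onto $E_x$.
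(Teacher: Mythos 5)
Your proof is correct, and it follows a route that is genuinely somewhat different from the paper's, although both hinge on the same resolvent identity (Lemma~\ref{l:locareso}) and on the same final observation about transferring compactness to the possibly non-complemented submodule $E_x$.

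The paper reduces the claim to $a = \De$ (using that $A_x \cd \De$ is dense in $A_x$), multiplies the full $2 \times 2$ matrix resolvent identity on the left by $\mathrm{diag}(\phi(\De),\phi(\De))$, and deduces $\phi(\De^2)(i+\wit{D_x})^{-1} \in \mathbb{K}(E,E_x)$, where $\mathbb{K}(E,E_x)$ and its adjoint ideal $\mathbb{K}(E_x,E)$ package the ``image in $E_x$'' information abstractly. It then upgrades to $\phi(\De)(i+\wit{D_x})^{-1} \in \mathbb{K}(E,E_x)$ via the same $(\phi(\De)+1/n)^{-1}\phi(\De)^2 \to \phi(\De)$ trick you use, and finishes by noting that $\mathbb{K}(E,E_x) \cd \mathbb{K}(E_x,E)$ restricts to $\mathbb{K}(E_x)$. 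By contrast, you reduce to $a = xbx^*$, extract only the $(1,2)$-entry of the resolvent identity, and establish the slightly stronger statement $\phi(\De)R \in \mathbb{K}(E)$ outright. You then take an extra detour: forming $Q := \phi(x^*)(1+\wit{D_x}^2)^{-1}\phi(x) \geq 0$, computing $Q^2$, and invoking the square root of a positive compact. The paper avoids the square-root step altogether because it works with $\phi(\De)(1+\wit{D_x}^2)^{-1}\phi(\De)$ directly (which is already the operator one needs on $E_x$), whereas your choice of $a = xbx^*$ puts $\phi(b)Q\phi(b^*)$ in the middle and forces you to establish compactness of $Q$ itself. Your explicit rank-one computation for the map $T \mapsto \phi(x) T \phi(x^*)|_{E_x}$ is precisely the same content as the paper's claim that products in $\mathbb{K}(E,E_x)\cd\mathbb{K}(E_x,E)$ restrict to compacts on $E_x$. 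On balance the paper's version is a bit tighter, but your argument is complete, correctly identifies that the heart of the matter is the lack of a complemented projection onto $E_x$, and circumvents it in essentially the same spirit.
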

\begin{proof}
Notice that $\De \in A_x$ and that the left ideal $A_x \cd \Delta \su A_x$ is norm-dense. It thus suffices to show that $\phi_x(\De) \cd (i + D_x)^{-1} \in \mathbb{K}(E_x)$.

We apply the notation $\mathbb{K}(E,E_x) \su \mathbb{K}(E)$ for the closed right ideal generated by all compact operators on $E$ of the form $\ket{\xi}\bra{\eta}$ with $\xi \in E_x$ and $\eta \in E$. Similarly, we let $\mathbb{K}(E_x,E) \su \mathbb{K}(E)$ denote the closed left ideal generated by all compact operators of the form $\ket{\eta}\bra{\xi}$ for $\xi \in E_x$ and $\eta \in E$. We remark that $\mathbb{K}(E_x,E) = \mathbb{K}(E,E_x)^*$. 

Since $(E,\phi,D)$ is a half-closed chain we know that 
\[
\ma{cc}{\phi(\De) & 0 \\ 0 & \phi(\De) }\ma{cc}{i & D^* \\ D & i}^{-1} \in \mathbb{K}(E \op E)
\]
and it therefore follows from Lemma \ref{l:locareso} that
\[
\phi(\De)^2  (i + \wit{D_x})^{-1} \in \mathbb{K}(E,E_x) \, .
\]
Since $(\phi(\Delta)+1/n)^{-1}\phi(\Delta)^2  \to \phi(\Delta)$ as $n \to \infty$ this implies that also $\phi(\De) (i + \wit{D_x})^{-1} \in \mathbb{K}(E,E_x)$ and thus that $(-i + \wit{D_x})^{-1} \phi(\De) \in \mathbb{K}(E_x,E)$. We may thus conclude that $\phi(\De) (1 + \wit{D_x}^2)^{-1} \phi(\De) \in \mathbb{K}(E,E_x) \cd \mathbb{K}(E_x,E)$ restricts to a compact operator on the Hilbert $C^*$-module $E_x \subseteq E$. But this proves the present proposition since we have from Proposition \ref{p:loca} that
\[
\phi_x(\De) (1 + D_x^2)^{-1} \phi_x(\De) = \big( \phi(\De) (1 + \wit{D_x}^2)^{-1} \phi(\De) \big)|_{E_x} \, .\qedhere
\]
\end{proof}

We continue by proving the twisted commutator condition $(2)$ of Definition \ref{d:modular}.

\begin{prop}\label{p:twist}
Let $a \in \sA_x$, $\la \in \cc$. Then $(\phi_x(a) + \la)\phi_x(\De) : E_x \to E_x$ has $\Tex{Dom}(D_x) \su E_x$ as an invariant submodule and
\[
D_x (\phi_x(a) + \la)\phi_x(\De) - \phi_x(\De) (\phi_x(a) + \la) D_x : \Tex{Dom}(D_x) \to E_x
\]
extends to a bounded adjointable operator $d_\De(a,\la) : E_x \to E_x$. In fact we have that
\[
\begin{split}
d_\De(a,\la) & = \big( \phi(x) d( x^* (a + \la) x ) \phi(x^*) \big)|_{E_x}\, .
\end{split}
\]
\end{prop}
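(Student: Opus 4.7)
The plan is to prove the formula $d_\De(a,\la) = (\phi(x) d(b) \phi(x^*))|_{E_x}$, where $b := x^*(a+\la)x$, by first verifying the commutator identity on the core $\phi(x)(\Tex{Dom}(D)) \subseteq \Tex{Dom}(D_x)$ from Proposition~\ref{p:loca}, and then extending to all of $\Tex{Dom}(D_x)$ by closedness of $D_x$.

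As a preparation, I would observe that $b = x^* a x + \la x^* x$ lies in $\sA$, since $\sA$ is a $*$-subalgebra of $A$ containing $a$ and $x$. Consequently, the bounded adjointable extension $d(b) : E \to E$ of $[D,b]$ exists by the half-closed chain axioms, and $\phi(x) d(b) \phi(x^*)$ is a bounded adjointable operator on $E$ whose image lies in $\phi(x) E \subseteq E_x$; hence it restricts to a bounded adjointable operator on $E_x$. I would also note that $(a+\la)\De = a x x^* + \la x x^* \in A_x$, so the left action of $(a+\la)\De$ preserves $E_x$, which makes sense of $(\phi_x(a)+\la)\phi_x(\De)$ as a bounded operator on $E_x$.

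The computational heart is then to take $\eta_0 \in \Tex{Dom}(D)$ and $\xi := \phi(x)(\eta_0)$, and verify both sides of the commutator identity. Using that $\sA$ preserves $\Tex{Dom}(D)$, one sees that
\[
(\phi_x(a)+\la)\phi_x(\De)\xi = \phi((a+\la)x x^* x)(\eta_0) \in \Tex{Dom}(D) \cap E_x \subseteq \Tex{Dom}(D_x),
\]
and that $D_x\xi = \phi(x) D \phi(x^* x)(\eta_0)$ via Definition~\ref{d:localiz}. Factoring $x^*(a+\la)x x^* x = b \cdot x^* x$ and again applying the concrete formula $D_x = \phi(x) D \phi(x^*)$ on $\Tex{Dom}(D) \cap E_x$, a short manipulation reduces the difference to
\[
D_x(\phi_x(a)+\la)\phi_x(\De)\xi - \phi_x(\De)(\phi_x(a)+\la)D_x\xi = \phi(x)\bigl[D\phi(b)-\phi(b)D\bigr]\phi(x^* x)(\eta_0),
\]
and using $[D,\phi(b)] = d(b)$ on $\Tex{Dom}(D)$ together with $\phi(x^* x)\eta_0 = \phi(x^*)\xi$, the right-hand side collapses to $\phi(x) d(b) \phi(x^*)\xi$.

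Finally, for arbitrary $\eta \in \Tex{Dom}(D_x)$ I would pick, by the core property of Proposition~\ref{p:loca}, a sequence $\xi_n \in \phi(x)(\Tex{Dom}(D))$ with $\xi_n \to \eta$ and $D_x\xi_n \to D_x\eta$. Rewriting the identity on the core as
\[
D_x(\phi_x(a)+\la)\phi_x(\De)\xi_n = \phi(x) d(b) \phi(x^*)\xi_n + \phi_x(\De)(\phi_x(a)+\la) D_x\xi_n,
\]
the right-hand side converges in norm by boundedness of $\phi(x) d(b) \phi(x^*)$ and of $\phi_x(\De)(\phi_x(a)+\la)$, while $(\phi_x(a)+\la)\phi_x(\De)\xi_n \to (\phi_x(a)+\la)\phi_x(\De)\eta$. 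Closedness of $D_x$ then delivers both the invariance $(\phi_x(a)+\la)\phi_x(\De)\eta \in \Tex{Dom}(D_x)$ and the desired bounded extension formula for $d_\De(a,\la)$. I expect the main obstacle to be not the computation itself but the careful domain bookkeeping — checking at each step that the relevant vectors lie in the correct $\Tex{Dom}(D)$ or $E_x$, using $\sA$- and $A_x$-invariance — rather than any deeper conceptual point.
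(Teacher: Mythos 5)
Your proof is correct and follows essentially the same route as the paper's. The paper performs the identical algebraic manipulation — rewriting the commutator as $\phi(x)\bigl[D\phi(b)-\phi(b)D\bigr]\phi(x^*)$ with $b = x^*(a+\la)x \in \sA$ — on the core $\Tex{Dom}(D)\cap E_x$, whereas you work on the (smaller) core $\phi(x)(\Tex{Dom}(D))$ from Proposition~\ref{p:loca}; both are valid. You also spell out the closedness argument for extending the identity from the core to all of $\Tex{Dom}(D_x)$, which the paper compresses into a one-line appeal to the core property — a helpful amount of extra detail but not a different method.
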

\begin{proof}
Let $\xi \in \Tex{Dom}(D) \cap E_x$. We then have that
\[
( \phi_x(a) + \la ) \phi_x(\De)(\xi) \in \Tex{Dom}(D) \cap E_x
\]
and that
\[
\begin{split}
& D_x (\phi_x(a) + \la)\phi_x(\De)(\xi) - \phi_x(\De) (\phi_x(a) + \la) D_x(\xi) \\
& \q =
\phi(x) D \phi(x^*) (\phi(a) + \la)\phi(x x^*)(\xi)  \\
& \qqq - \phi(x x^*) (\phi(a) + \la) \phi(x) D \phi(x^*) (\xi) \\
& \q = 
\phi(x) d( x^* (a + \la) x ) \phi(x^*)(\xi) \, . 
\end{split}
\]
Since $\Tex{Dom}(D) \cap E_x$ is a core for the localization $D_x : \Tex{Dom}(D_x) \to E_x$, this proves the proposition.
\end{proof}

We finally prove the supremum condition $(3)$ of Definition \ref{d:modular}.

\begin{prop}\label{p:supremum}
Let $a \in \sA_x$, $\la \in \cc$. Then we have that
\[
\sup_{\ep > 0} \| (\phi_x(\De)^{1/2} + \ep)^{-1} d_\De(a,\la) (\phi_x(\De)^{1/2} + \ep)^{-1} \|_\infty < \infty \, .
\]
\end{prop}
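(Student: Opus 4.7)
The plan is to exploit the explicit factorization from Proposition \ref{p:twist}, namely
\[
d_\De(a,\la) = \bigl( \phi(x)\, d(x^*(a+\la)x)\, \phi(x^*) \bigr)\big|_{E_x},
\]
and absorb the outer factors of $\phi(x)$ and $\phi(x^*)$ into the two inverses via a C$^*$-identity estimate. Since $x \in \sA$ and $a \in \sA_x \subseteq \sA$, the element $x^*(a+\la)x$ lies in $\sA$, so $T := d(x^*(a+\la)x) \in \mathbb{L}(E)$ with finite operator norm. It therefore suffices to show that the outer compositions are uniformly bounded in $\ep > 0$, and then multiplicativity of the operator norm closes the argument.

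To set up the estimate cleanly, I would view $y := \phi(x)$ as a bounded adjointable operator $E \to E_x$ (its image lies in $E_x$ by definition of $E_x$), whose adjoint is $y^* = \phi(x^*)|_{E_x} : E_x \to E$. A direct computation gives $y y^* = \phi_x(\De)$, and so $\phi_x(\De)^{1/2}$ coincides with $|y^*|$ in the continuous functional calculus on $\mathbb{L}(E_x)$. This is the only non-trivial piece of book-keeping: one has to treat $\phi(x)$ as a map into the closed submodule $E_x$ so that the selfadjoint square root $\phi_x(\De)^{1/2}$ on $E_x$ is intrinsic to the polar data of $y$.

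The heart of the proof is then the C$^*$-identity computation
\[
\bigl\| (|y^*| + \ep)^{-1} y \bigr\|_\infty^2
= \bigl\| (|y^*|+\ep)^{-1}\, yy^*\, (|y^*|+\ep)^{-1} \bigr\|_\infty \leq 1,
\]
where the bound comes from the scalar inequality $\la^2/(\la+\ep)^2 \leq 1$ applied via the functional calculus on the spectrum of $|y^*|$. Passing to adjoints yields the same uniform estimate for $y^*(|y^*|+\ep)^{-1}$. Combining these with the factorization above gives
\[
\sup_{\ep > 0} \bigl\| (\phi_x(\De)^{1/2} + \ep)^{-1}\, d_\De(a,\la)\, (\phi_x(\De)^{1/2} + \ep)^{-1} \bigr\|_\infty \leq \| T \|_\infty < \infty,
\]
which is the claim. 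There is no real obstacle here beyond the mild subtlety of interpreting $\phi(x)$ as a morphism of Hilbert $C^*$-modules $E \to E_x$; once that identification is made, the proof reduces to a one-line C$^*$-identity.
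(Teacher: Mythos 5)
Your proposal is correct and takes essentially the same approach as the paper: both rely on the factorization $d_\De(a,\la) = \big(\phi(x)\, d(x^*(a+\la)x)\, \phi(x^*)\big)|_{E_x}$ from Proposition \ref{p:twist}, and then bound $\|(\phi_x(\De)^{1/2}+\ep)^{-1}\phi(x)\|_\infty \leq 1$ uniformly in $\ep$. The paper simply asserts this last norm bound, while you spell it out via the $C^*$-identity and functional calculus on $|y^*| = \phi_x(\De)^{1/2}$ — this is a worthwhile expansion but not a different argument.
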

\begin{proof}
This follows immediately from Proposition \ref{p:twist}. Indeed, the operator norm of
\[
(\phi_x(\De)^{1/2} + \ep)^{-1} \phi(x) : E \to E_x
\]
is bounded by $1$ for all $\ep > 0$.
\end{proof}

\begin{rem}
One may equip $\sA_x$ with the operator space norm $\| \cd \|_1 : M_n(\sA_x) \to [0,\infty)$, $n \in \nn$, defined by
\[
\| a \|_1 := \sup\{ \| a \| , \| d(a) \|_\infty \} \q \Tex{ for all } a \in M_n(\sA_x) \, ,
\]
where the norms inside the supremum are the $C^*$-norm on $M_n(A)$ and the operator-norm on $\mathbb{L}(E^{\op n})$, respectively. Clearly, the inclusion $\sA_x \to A_x$ is then completely bounded. It is moreover possible to find a constant $C > 0$ such that
\[
\sup_{\ep > 0} \| (\phi_x(\De)^{1/2} + \ep)^{-1} d_\De(a,0) (\phi_x(\De)^{1/2} + \ep)^{-1} \|_\infty \leq C \cd \| a \|_1 \, ,
\]
for all $a \in M_n(\sA_x)$. Cf. Remark \ref{r:cbb}.
\end{rem}




\section{Localization as an unbounded Kasparov product}\label{s:unbkas}
In this section we continue under the conditions spelled out in the beginning of Section \ref{s:locahalf}. We thus have a half-closed chain $(\sA,E,D)$ and an element $x \in \sA$. 

The element $x \in \sA$ provides us with a closed right ideal $I_x \su A$ defined as the norm-closure:
\[
I_x := \Tex{cl}( x A) \, .
\]
In particular, we may consider $I_x$ as a countably generated Hilbert $C^*$-module over $A$. The hereditary $C^*$-subalgebra $A_x = \Tex{cl}(xAx^*) \subseteq A$ can be identified with the compact operators on $I_x$ via the $*$-homomorphism $\psi : A_x \to \mathbb{L}(I_x)$ induced by the multiplication in $A$. We thus obtain an even Kasparov module $(I_x,0)$ from $A_x$ to $A$ with corresponding class $[I_x,0] \in KK_0(A_x,A)$ in KK-theory.

Moreover, by Theorem \ref{t:kasphalf}, our half-closed chain $(\sA,E,D)$ (of parity $p \in \{0,1\}$) yields a Kasparov module $(E,F_D)$ from $A$ to $B$ with corresponding class $[E,F_D] \in KK_p(A,B)$.

Finally, the unbounded modular cycle $(\sA \cap A_x, E_x, \phi_x(\De))$ constructed in Section \ref{s:locahalf} yields a Kasparov module $(E_x,F_{D_x})$ from $A_x$ to $B$ with corresponding class $[E_x,F_{D_x}] \in KK_p(A_x,B)$, see Theorem \ref{t:modkas}.

In this section we will prove the following theorem:

\begin{thm}
\label{thm:loca-product}
Suppose that $(\sA,E,D)$ is a half-closed chain, that $x \in \sA$ and that $A_x$ is separable. Then we have the identity
\[
[ E_x,F_{D_x}] = [ I_x,0] \hot_A [E, F_D]
\]
in $KK_p(A_x,B)$, where $\hot_A : KK_0(A_x,A) \ti KK_p(A,B) \to KK_p(A_x,B)$ denotes the Kasparov product.
\end{thm}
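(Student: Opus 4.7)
I would apply the Connes--Skandalis characterization of the bounded Kasparov product \cite{CoSk:LIF}. The plan has four steps: identifying the underlying module, reducing to the connection condition (positivity being trivial), relating $F_{D_x}$ to the auxiliary bounded transform $F_{\wit{D_x}}$ on $E$, and finally a resolvent-level compactness estimate.

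First, the map $y \otimes \eta \mapsto \phi(y)\eta$ defines a unitary isomorphism of right Hilbert $B$-modules $I_x \hot_A E \to E_x$ intertwining the left $A_x$-actions: isometry follows from $\langle y_1 \otimes \eta_1, y_2 \otimes \eta_2 \rangle = \langle \eta_1, \phi(y_1^* y_2) \eta_2 \rangle$, and surjectivity from the density of $\phi(x) E$ in $E_x$. Under this identification, Connes--Skandalis asserts that $(E_x, F_{D_x})$ represents $[I_x,0] \hot_A [E,F_D]$ once $F_{D_x}$ is verified to be an $F_D$-connection for $I_x$; positivity is trivial since the first operator vanishes. The connection condition amounts to: for $\xi$ in a norm-dense subset of $I_x$, the operator $T_\xi F_D - F_{D_x} T_\xi$ (with $T_\xi \eta := \phi(\xi) \eta$) lies in $\mathbb{K}(E, E_x)$, the adjoint version then following from $(F_D - F_{D^*}) \phi(x^*) \in \mathbb{K}(E)$. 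Since $\{xa : a \in A\}$ is dense in $I_x$ and $T_{xa} F_D - F_{D_x} T_{xa}$ differs from $(T_x F_D - F_{D_x} T_x)\phi(a)$ by $\phi(x) [\phi(a), F_D]$, which is compact by Theorem \ref{t:kasphalf}, one may restrict to $\xi = x$.

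Next, I would pass to the auxiliary selfadjoint regular operator $\wit{D_x} := D \phi(\De) - d(x) \phi(x^*)$ on $E$ from Lemma \ref{l:self}. By Proposition \ref{p:loca}, the submodule $E_x$ is invariant under the resolvents of $\wit{D_x}$ and $\wit{D_x}|_{E_x} = D_x$; hence $F_{\wit{D_x}}|_{E_x} = F_{D_x}$ by continuous functional calculus. Since $\phi(x) E \subseteq E_x$, the required compactness $\phi(x) F_D - F_{D_x} \phi(x) \in \mathbb{K}(E, E_x)$ is equivalent to $\phi(x) F_D - F_{\wit{D_x}} \phi(x) \in \mathbb{K}(E)$. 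Decomposing as $[\phi(x), F_D] + (F_D - F_{\wit{D_x}}) \phi(x)$, the first summand is compact by Theorem \ref{t:kasphalf}. For the second, a direct estimate via continuous functional calculus on $\phi(\De) \geq 0$ yields $\phi(\De) (\phi(\De) + 1/n)^{-1} \phi(x) \to \phi(x)$ in operator norm, so by norm-closedness of $\mathbb{K}(E)$ it suffices to establish $(F_D - F_{\wit{D_x}}) \phi(\De) \in \mathbb{K}(E)$.

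Finally, I would carry out this core compactness estimate. Using the integral representation $F_T = \tfrac{1}{\pi} \int_0^\infty \la^{-1/2} T (1 + \la + T^* T)^{-1} \, d\la$ for $T \in \{D, \wit{D_x}\}$ together with the $2 \ti 2$ resolvent identity of Lemma \ref{l:locareso}, each integrand can be rewritten as a finite sum of products in which a compact factor of the form $\phi(\De) (i + D^*)^{-1}$ (compact by half-closed chain axiom $(1)$) is multiplied by operators whose norms are uniformly bounded by $C \la^{-1/2} (1 + \la)^{-1}$. Since $\mathbb{K}(E)$ is norm-closed, the integral converges in operator norm to a compact operator, giving the desired estimate. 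This last step is the principal technical obstacle, parallel to but more intricate than the proof of Theorem \ref{t:kasphalf}; once completed, Connes--Skandalis concludes $[E_x, F_{D_x}] = [I_x, 0] \hot_A [E, F_D]$ in $KK_p(A_x, B)$.
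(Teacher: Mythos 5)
Your first three steps—identifying $E_x \cong I_x \hot_A E$, reducing to the connection condition via \cite[Theorem A.3]{CoSk:LIF} (positivity trivial), localizing to $\xi = x$, observing $F_{\wit{D_x}}|_{E_x} = F_{D_x}$, and approximating $\phi(x)$ by $\phi(\De)(\phi(\De)+1/n)^{-1}\phi(x)$—are all correct and parallel the paper's strategy. The gap is in the final step, where you claim that $(F_D - F_{\wit{D_x}})\phi(\De) \in \mathbb{K}(E)$ follows from the integral representation $F_T = \tfrac{1}{\pi}\int_0^\infty \la^{-1/2} T(1+\la+T^*T)^{-1}d\la$ combined with Lemma \ref{l:locareso}. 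This does not go through: Lemma \ref{l:locareso} is a resolvent identity at $\la = 0$, and its natural $\la$-dependent generalization compares $\phi(\De)(\mu+\wit{D_x})^{-1}$ with $(\mu + D)^{-1}$ for $\mu = i\sqrt{1+\la}$. But $(\mu+D)\phi(\De) - (\mu+\wit{D_x}) = \mu(\phi(\De)-1) + d(x)\phi(x^*)$, so the difference of resolvents contains a term $(\mu+D)^{-1}\mu(\phi(\De)-1)(\mu+\wit{D_x})^{-1}$ whose operator norm is only $O((1+\la)^{-1/2})$—the growing factor $|\mu| = \sqrt{1+\la}$ kills the decay of one resolvent. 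After multiplying by $\la^{-1/2}$, the integrand decays only as $\la^{-1/2}(1+\la)^{-1/2}$, which is not integrable on $(0,\infty)$; the claimed uniform bound $C\la^{-1/2}(1+\la)^{-1}$ for the non-compact factors is not available.

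This is precisely the obstruction that forces the paper to go through the modular transform $G_{(D_x,\De)}$ (Section \ref{ss:modular}) rather than directly through $F_{\wit{D_x}}$. The modular transform is built from the twisted resolvent $(1 + \la\De^2 + D_x^2)^{-1}$ in which $\la$ is multiplied by $\De^2$, so the dangerous term $\mu(\phi(\De)-1)$ never appears, and the modular estimate $\|\De R_x(\la\De^2)\De\|_\infty \leq 2(1+\la)^{-1}$ of Equation \eqref{eq:basic} delivers the needed decay (Lemma \ref{l:resolvent} and Lemma \ref{l:resest} give $O((1+\la)^{-3/4})$). The passage from $G_{(D_x,\De)}$ to the genuine bounded transform $F_{D_x}$ is then a separate, nontrivial result—Theorem \ref{t:modumodu}, cited from \cite[Theorem 8.1]{Kaa:UKM}—and it costs six powers of $\De$. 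Your proposal in effect asserts that this whole modular apparatus can be bypassed with a single $\phi(\De)$ and a direct resolvent comparison; as it stands, that claim is unsupported, and the decay analysis above indicates it would fail.
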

\begin{proof}
The $C^*$-correspondence $E_x$ from $A_x$ to $A$ is unitarily isomorphic to the interior tensor product of $C^*$-correspondences $I_x \hot_\phi E$ (via the unitary isomorphism $xa \hot \xi \mapsto \phi(xa)(\xi)$). For each $a \in A$, we define the bounded adjointable operator $T_{xa} : E \to E_x$ by $\xi \mapsto \phi(xa)(\xi)$. By \cite[Theorem A.3]{CoSk:LIF} it suffices to prove the connection condition, thus that
\begin{equation}\label{eq:conncondI}
\begin{split}
& F_{D_x} T_{xa} -  T_{xa} F_D \, , \\
& F_{D_x} T_{xa}  - T_{xa} F_{D^*} \in \mathbb{K}(E,E_x)
\end{split}
\end{equation}
for all $a \in A$. Indeed, the positivity condition of \cite[Theorem A.3]{CoSk:LIF} is obviously satisfied since the bounded adjointable operator in the Kasparov module $(I_x,0)$ from $A_x$ to $A$ is trivial. See also Section \ref{s:kuce} for more details.

However, since $T_{xa} = T_x \phi(a) : E \to E_x$ and $\phi(a) (F_D - F_{D^*}) \in \mathbb{K}(E)$ it suffices to prove the first of these inclusions. This proof will occupy the remainder of this section, see Proposition \ref{p:connectionI}.
\end{proof}

\begin{rem}
In the case where $xA \su A$ is norm-dense and $A$ is separable, we have that $(I_x,0) = (A, 0)$ and it therefore follows from the above theorem that the two Kasparov modules $(E_x,F_{D_x})$ and $(E,F_D)$ represents the same class in $KK_p(A,B)$. 
\end{rem}




\subsection{The modular transform}\label{ss:modular}
We continue working under the general assumptions stated in the beginning of Section \ref{s:locahalf}. We recall that $\De := x x^*$. We will in the following suppress the $*$-homomorphism $\phi_x : A_x \to \mathbb{L}(E_x)$.

For each $\la \geq 0$, we introduce the notation
\[
\begin{split}
& R_x(\la \De^2) := (1 + \la \De^2 + D_x^2)^{-1} \in \mathbb{L}(E_x) \\
& R_x(\la) := (1 + \la + D_x^2)^{-1} \in \mathbb{L}(E_x) \, .
\end{split}
\]
In general, we are not able to estimate the norm of $R_x(\la \De^2)$ from above by $(1 + \la)^{-1}$ since $\De : E_x \to E_x$ may have zero in the spectrum. Instead, we recall the following basic estimate from \cite[Section 11]{Kaa:UKM}:
\begin{equation}\label{eq:basic}
\| \De R_x(\la \De^2) \De \|_\infty \leq \frac{2}{(1 + \la)} \q \forall \la \geq 0 \, .
\end{equation}

The next definition is from \cite[Section 8]{Kaa:UKM}:

\begin{defn}
The \emph{modular transform} of the unbounded modular cycle $(E_x,D_x, \De)$ is the unbounded operator
\[
G_{(D_x,\De)} : \De( \Tex{Dom}(D_x) ) \to E_x
\]
defined by
\begin{equation}\label{eq:modutrans}
G_{(D_x,\De)} : \eta \mapsto 
\frac{1}{\pi} \int_0^\infty \la^{-1/2} \De (1 + \la \De^2 + D_x^2  )^{-1} D_x(\eta) \, d\la \, .
\end{equation}
\end{defn}

We remark that $G_{(D_x,\De)} : \De( \Tex{Dom}(D_x) ) \to E_x$ is well-defined. Indeed, for $\eta = \De(\xi)$ with $\xi \in \Tex{Dom}(D_x)$ we have from Proposition \ref{p:twist} that
\[
\begin{split}
& \De R_x( \la \De^2) D_x(\eta) \\
& \q = \De R_x(\la \De^2) \De D_x(\xi)
+ \De R_x(\la \De^2) x d(x^* x) x^* (\xi) \, .
\end{split}
\]
Using the estimate from Equation \eqref{eq:basic}, we may thus find a constant $C > 0$ such that
\[
\| \De (1 + \la \De^2 + D_x^2  )^{-1} D_x(\eta) \| \leq C \cd (1 + \la)^{-3/4} \q \forall \la \geq 0 \, ,
\]
implying that the integral in Equation \eqref{eq:modutrans} converges absolutely in the norm on $E_x$.

The following result is a consequence of \cite[Theorem 8.1]{Kaa:UKM}:

\begin{thm}\label{t:modumodu}
The difference
\[
F_{D_x} \De^6 - G_{(D_x,\De)} \De^6 : \Tex{Dom}(D_x) \to E_x
\]
extends to a compact operator on $E_x$.
\end{thm}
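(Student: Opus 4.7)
The plan is to reduce the statement directly to the abstract result \cite[Theorem 8.1]{Kaa:UKM}, which gives precisely this kind of compactness for any unbounded modular cycle. By Theorem \ref{t:halfmodu}, the triple $(E_x, D_x, \phi_x(\De))$ is an unbounded modular cycle from $\sA_x$ to $B$, so the hypotheses of the cited theorem are all in place. A small preliminary check is that $G_{(D_x,\De)}(\De^6 \xi)$ is well defined for every $\xi \in \Tex{Dom}(D_x)$: applying Proposition \ref{p:twist} with $a = 0$ and $\la = 1$ shows that $\De$ preserves $\Tex{Dom}(D_x)$, and therefore $\De^6 \xi = \De(\De^5 \xi) \in \De\bigl(\Tex{Dom}(D_x)\bigr)$ as required.

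To indicate the mechanism behind \cite[Theorem 8.1]{Kaa:UKM}, I would write both $F_{D_x} \De^6 \xi$ and $G_{(D_x,\De)}(\De^6 \xi)$ as absolutely norm-convergent integrals on $(0,\infty)$, producing a combined integrand of the form
\[
\tfrac{1}{\pi}\, \la^{-1/2} \bigl( D_x R_x(\la) - \De R_x(\la \De^2) D_x \bigr) \De^6 \xi.
\]
Using the second resolvent identity together with the twisted commutator $d_\De(\De, \la)$ from Proposition \ref{p:twist}, one then rewrites the bracket as a finite sum of expressions each of which contains either a factor $\De R_x(\la)$ or a sandwich $\De R_x(\la \De^2)\De$. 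The former is compact by Proposition \ref{p:locacomp}, and the latter is controlled in operator norm by the basic estimate \eqref{eq:basic}.

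The main obstacle will be securing absolute norm-convergence and compactness of the final integral simultaneously. Since $\De$ may have $0$ in its spectrum, the resolvent $R_x(\la \De^2)$ does not admit a uniform bound in $\la$; each of its occurrences therefore has to be flanked by factors of $\De$ in order to harness the $(1+\la)^{-1}$-type decay coming from \eqref{eq:basic}. Distributing the six available powers of $\De$ carefully across the summands is exactly what makes the combined integrand lie in $\mathbb{K}(E_x)$ and be norm-integrable on $(0,\infty)$. This bookkeeping is the technical core of \cite[Theorem 8.1]{Kaa:UKM}, and once it is in place the present theorem follows by direct specialization to the unbounded modular cycle furnished by Theorem \ref{t:halfmodu}.
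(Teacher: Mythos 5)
Your proposal is correct and takes essentially the same route as the paper: the paper states the result as a direct consequence of \cite[Theorem 8.1]{Kaa:UKM}, with Theorem \ref{t:halfmodu} supplying the hypothesis that $(E_x, D_x, \phi_x(\De))$ is an unbounded modular cycle, which is exactly your reduction. The extra sketch of the mechanism and the domain check via Proposition \ref{p:twist} (with $a=0$, $\la=1$) are sound but not in the paper, which contains no proof beyond the citation.
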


Notice that the above result implies that the unbounded operator
\[
G_{(D_x,\De)} \De^6 : \Tex{Dom}(D_x) \to E_x
\]
extends to a bounded adjointable operator on $E_x$.




\subsection{The connection condition}
We will continue working under the assumptions of Section \ref{s:locahalf}.

We recall from Lemma \ref{l:self} that
\[
\wit{D_x} := D \phi(\De) - d(x) \phi(x^*) : \Tex{Dom}(D \phi(\De)) \to E
\]
is a selfadjoint and regular unbounded operator and we put
\[
\begin{split}
& \wit{R_x}( \la \phi(\De^2)) := (1 + \la \phi(\De^2) + (\wit{D_x})^2)^{-1} \in \mathbb{L}(E) \\
& R(\la) := (1 + \la + D^* D)^{-1} \in \mathbb{L}(E) \, ,
\end{split}
\]
for all $\la \geq 0$.

\begin{lma}\label{l:resolvent}
For each $\la \geq 0$, we have the identity
\[
\begin{split}
& R(\la) - \wit{R_x}( \la \phi(\De^2)) \phi(\De^2) \\
& \q = \wit{R_x}(\la \phi(\De^2)) \big( 1 - \phi(\De^2) + \phi(x) d(x^* x x^*) D\big) R(\la) \\
& \qqq + \big( \wit{D_x} \wit{R_x}(\la \phi(\De^2) ) \big)^* \phi(x) d(x^*) R(\la)
\end{split}
\]
of bounded adjointable operators on $E$.
\end{lma}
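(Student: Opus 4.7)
My plan is to derive the asserted identity as one of bounded adjointable operators on $E$ via the abstract second-resolvent identity
\[
A^{-1} - B^{-1} \phi(\De^2) = B^{-1}\bigl(B - \phi(\De^2) A\bigr) A^{-1},
\]
applied with $A := 1 + \la + D^* D$ and $B := 1 + \la \phi(\De^2) + \wit{D_x}^2$, so that $A^{-1} = R(\la)$ and $B^{-1} = \wit{R_x}(\la \phi(\De^2))$. A direct expansion gives
\[
B - \phi(\De^2) A = 1 - \phi(\De^2) + \wit{D_x}^2 - \phi(\De^2) D^* D,
\]
so that everything reduces to identifying $\wit{R_x}(\la \phi(\De^2)) \bigl(\wit{D_x}^2 - \phi(\De^2) D^* D\bigr) R(\la)$ with the two ``derivation'' terms $\wit{R_x}(\la \phi(\De^2)) \phi(x) d(x^* x x^*) D R(\la)$ and $\bigl(\wit{D_x} \wit{R_x}(\la \phi(\De^2))\bigr)^* \phi(x) d(x^*) R(\la)$.

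The central computation is an iterated commutator expansion on the range of $R(\la)$. For $\eta \in E$ and $\xi := R(\la)\eta$ one has $\xi \in \Tex{Dom}(D^* D) \subseteq \Tex{Dom}(\wit{D_x})$ by Lemma \ref{l:self}. Inserting the definition $\wit{D_x} = D \phi(\De) - d(x) \phi(x^*)$ and using the half-closed chain relation $D \phi(a) = \phi(a) D^* + d(a)$ on $\Tex{Dom}(D^*)$ for $a \in \sA$, one obtains the working formula $\wit{D_x} \xi = \phi(\De) D \xi + \phi(x) d(x^*) \xi$. Since $\phi(\De) D \xi = \phi(x) \phi(x^*) D \xi$ again lies in $\Tex{Dom}(D) \subseteq \Tex{Dom}(\wit{D_x})$, a second application of $\wit{D_x}$ together with $D \phi(x^* x x^*) = \phi(x^* x x^*) D^* + d(x^* x x^*)$ yields
\[
\wit{D_x}\bigl(\phi(\De) D R(\la)\bigr) = \phi(\De^2) D^* D R(\la) + \phi(x) d(x^* x x^*) D R(\la),
\]
which already accounts for the first derivation term. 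Combining the two uses of $\wit{D_x}$ gives $\bigl(\wit{D_x}^2 - \phi(\De^2) D^* D\bigr) R(\la) \eta = \phi(x) d(x^* x x^*) D R(\la) \eta + \wit{D_x}\bigl(\phi(x) d(x^*) R(\la) \eta\bigr)$, valid on the dense subspace of $\eta \in E$ for which $\phi(x) d(x^*) R(\la) \eta \in \Tex{Dom}(\wit{D_x})$.

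The main technical obstacle is precisely the domain issue behind this last condition: in general $d(x^*) R(\la) \eta$ need not lie in $\Tex{Dom}(D^*)$, and so the expression $\wit{D_x}\bigl(\phi(x) d(x^*) R(\la) \eta\bigr)$ is not everywhere defined. To handle this I would use the adjoint $\bigl(\wit{D_x} \wit{R_x}(\la \phi(\De^2))\bigr)^*$, which is a bounded adjointable operator on $E$ because $\wit{D_x} \wit{R_x}(\la \phi(\De^2))$ is bounded by functional calculus (selfadjointness of $\wit{D_x}$ combined with $\la \phi(\De^2) \geq 0$). This adjoint agrees with the formal composition $\wit{R_x}(\la \phi(\De^2)) \wit{D_x}$ on the dense subspace $\Tex{Dom}(\wit{D_x})$, so $\bigl(\wit{D_x} \wit{R_x}(\la \phi(\De^2))\bigr)^* \phi(x) d(x^*) R(\la)$ provides the unique bounded extension of $\wit{R_x}(\la \phi(\De^2)) \wit{D_x} \phi(x) d(x^*) R(\la)$ to all of $E$. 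Applying $\wit{R_x}(\la \phi(\De^2))$ to the dense-subspace identity above and replacing the ill-defined term by this bounded extension yields the lemma on the dense subspace, and hence everywhere by continuity.
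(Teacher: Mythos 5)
Your plan is the same second--resolvent decomposition that the paper uses, and the computations $\wit{D_x}\xi = \phi(\De)D\xi + \phi(x)d(x^*)\xi$ and $\wit{D_x}\bigl(\phi(\De)D\xi\bigr) = \phi(\De^2)D^*D\xi + \phi(x)d(x^*xx^*)D\xi$ on $\Tex{Dom}(D^*D)$ are both correct. However, the way you handle the resulting domain problem leaves a genuine gap.

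Your fix rests on the subspace $\{\eta \in E : \phi(x)d(x^*)R(\la)\eta \in \Tex{Dom}(\wit{D_x})\}$ being dense, and then arguing by continuity. You do not establish this density, and there is no obvious reason it should hold: since $\Tex{Dom}(\wit{D_x}) = \Tex{Dom}(D\phi(\De))$, the condition amounts to $\phi(xx^*x)\,d(x^*)R(\la)\eta \in \Tex{Dom}(D)$, but $d(x^*)$ is merely a bounded operator with no reason to carry $\Tex{Dom}(D^*)$ into itself, so $d(x^*)R(\la)\eta$ can sit anywhere in $E$ and the mapping property $\phi(a)\bigl(\Tex{Dom}(D^*)\bigr) \subseteq \Tex{Dom}(D)$ gives you nothing. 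Without that density the phrase ``unique bounded extension of $\wit{R_x}(\la\phi(\De^2))\wit{D_x}\phi(x)d(x^*)R(\la)$'' has no content, and the continuity argument collapses.

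The paper sidesteps this entirely by never forming the ill-defined expression $\wit{D_x}\bigl(\phi(x)d(x^*)\xi\bigr)$, equivalently never writing $\wit{D_x}^2\xi$ directly. Instead it substitutes $\phi(\De^2)D^*D\xi = \phi(x)D\phi(x^*xx^*)D\xi - \phi(x)d(x^*xx^*)D\xi$ (both terms well-defined on $\Tex{Dom}(D^*D)$ by the standard mapping properties), and then proves the remaining identity
\[
1 - \wit{R_x}(\la\phi(\De^2))\bigl(1 + \la\phi(\De^2) + \phi(x)D\phi(x^*xx^*)D\bigr)\xi = \bigl(\wit{D_x}\wit{R_x}(\la\phi(\De^2))\bigr)^*\phi(x)d(x^*)\xi
\]
on $\Tex{Dom}(D^*D)$ by pairing with an arbitrary $\zeta \in E$ and moving $\wit{D_x}$ across the inner product \emph{one step at a time}. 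In that weak computation the only vectors that must lie in $\Tex{Dom}(\wit{D_x})$ are $\xi$, $\phi(\De)D\xi$, and $\wit{D_x}\wit{R_x}(\la\phi(\De^2))\zeta$, all of which do automatically (the last because $\wit{R_x}(\la\phi(\De^2))\zeta \in \Tex{Dom}(\wit{D_x}^2)$); the relation $\wit{D_x}\xi - \phi(\De)D\xi = \phi(x)d(x^*)\xi$ is only ever used as an identity of vectors in $E$, never fed back into $\wit{D_x}$. If you recast your final step in this weak form you obtain a complete proof; as written, the density claim is the missing ingredient.
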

\begin{proof}
We have the identities
\[
\begin{split}
& 1 - \wit{R_x}(\la \phi(\De^2)) \phi(\De^2) (1 + \la + D^* D) \\
&\q  = 1 - \wit{R_x}(\la \phi(\De^2)) \big( 1 + \la \phi(\De^2) + \phi(x) D \phi(x^* x x^*) D \big) \\
& \qq + \wit{R_x}( \la \phi(\De^2)) ( 1 - \phi(\De^2) + \phi(x) d(x^* x x^*) D) \\
& \q = \big( \wit{D_x} \wit{R_x}(\la \phi(\De^2)) \big)^* \phi(x) d(x^*) \\
& \qq + \wit{R_x}( \la \phi(\De^2)) ( 1 - \phi(\De^2) + \phi(x) d(x^* x x^*) D)
\end{split}
\]
on $\Tex{Dom}(D^* D)$. But this proves the lemma after multiplying with $R(\la) = (1 + \la + D^* D)^{-1}$ from the right.
\end{proof}

For each $y \in I_x = \Tex{cl}(xA)$, we recall that $T_y : E \to E_x$ denotes the bounded adjointable operator $T_y : \xi \mapsto \phi(y)(\xi)$. Notice then that it follows from Proposition \ref{p:loca} that 
\[
T_\De \wit{R_x}(\la \phi(\De^2)) \phi(\De) = \De R_x(\la \De^2) T_\De : E \to E_x \, .
\]

\begin{lma}\label{l:resest}
The difference
\[
T_\De R(\la) D \phi(\De) - \De R_x(\la \De^2) D_x T_{\De^2} : \Tex{Dom}(D \phi(\De)) \to E_x
\]
extends to a compact operator $M_\la : E \to E_x$ for all $\la \geq 0$. Moreover, there exists a constant $C > 0$ such that
\[
\| M_\la \|_\infty \leq C \cd (1 + \la)^{-3/4} \q \forall \la \geq 0 \, .
\]
\end{lma}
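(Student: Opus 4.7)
The plan is to combine Lemma \ref{l:resolvent} with the resolvent coincidence
\[
T_\De\, \wit{R_x}(\la\phi(\De^2))\, \phi(\De) = \De\, R_x(\la\De^2)\, T_\De
\]
recorded just above the statement. I would multiply the identity of Lemma \ref{l:resolvent} on the left by $T_\De$ and on the right by $D\phi(\De)$, decomposing $T_\De R(\la) D\phi(\De)$ into a main term $T_\De \wit{R_x}(\la\phi(\De^2))\phi(\De^2) D\phi(\De)$ plus two error terms coming from the two summands on the right-hand side of Lemma \ref{l:resolvent}. The operator $M_\la$ is then obtained as the difference of the main term and the target $\De R_x(\la\De^2) D_x T_{\De^2}$, plus the error terms.

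For the main piece, the resolvent coincidence together with $T_\De \phi(\De) = T_{\De^2} = \phi(\De^2)$ rewrites it as $\De R_x(\la\De^2)\, \phi(\De^2) D\phi(\De)$. Matching against the target $\De R_x(\la\De^2) D_x T_{\De^2}$ then reduces to controlling the difference $\phi(\De^2) D\phi(\De) - D_x \phi(\De^2)$ on $\Tex{Dom}(D\phi(\De))$. Using the half-closed chain axioms $\phi(a) \Tex{Dom}(D^*) \su \Tex{Dom}(D)$ and $D\phi(a) - \phi(a) D = d(a)$ for $a = x^*$ and $a = xx^*$, iterated to push $\phi(x^*)$ twice through $D$, this difference extends to a bounded adjointable operator with an inner factor of $\phi(x)$. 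Pre-multiplying by $\De R_x(\la\De^2)$ and invoking the basic estimate \eqref{eq:basic}, namely $\|\De R_x(\la\De^2)\De\|_\infty \leq 2(1+\la)^{-1}$, together with the compactness of $\phi_x(\De) (1 + D_x^2)^{-1}$ from Proposition \ref{p:locacomp}, yields compactness of this contribution with norm of order $(1+\la)^{-1}$.

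For the two error pieces I would argue similarly: each carries an inner factor of $\phi(x)$ (from $\phi(x) d(x^*xx^*) D$ or from $\phi(x) d(x^*)$), and combining with the outer $T_\De = \phi(xx^*)$ allows a $\phi(\De)$-factor to be inserted on both sides of $\wit{R_x}(\la\phi(\De^2))$, so that the resolvent coincidence can be invoked once more. The remaining tail $R(\la) D\phi(\De)$ is uniformly bounded in operator norm by $(1+\la)^{-1/2}$, via $\|DR(\la)\|_\infty \leq (1+\la)^{-1/2}$, while compactness follows from the compactness of $\phi(a)(1+D^*D)^{-1}$ supplied by the definition of a half-closed chain. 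The stated exponent $3/4$ arises by interpolating \eqref{eq:basic} against the uniform bound $\|\De\|_\infty \leq \|x\|^2$ to get $\|\De R_x(\la\De^2)^{1/2}\|_\infty \leq C(1+\la)^{-1/2}$, and combining with an additional $(1+\la)^{-1/4}$ gain coming from a square-root interpolation with $\|DR(\la)\|_\infty \leq (1+\la)^{-1/2}$.

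The main obstacle is the bookkeeping between the three resolvents $R(\la)$, $\wit{R_x}(\la\phi(\De^2))$ and $R_x(\la\De^2)$ living on the two different modules $E$ and $E_x$, together with the proliferation of twisted-commutator terms $d(\cdot)$ arising whenever $\phi(x)$ or $\phi(x^*)$ is pushed through $D$. The essential conceptual input that keeps the calculation tractable is the resolvent coincidence, which allows one to move freely between $\wit{R_x}(\la\phi(\De^2))$ on $E$ and $R_x(\la\De^2)$ on $E_x$ as soon as suitable $\phi(\De)$ factors are in place on both sides of the resolvent.
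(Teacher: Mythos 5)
Your overall architecture matches the paper's: split off the main term via Lemma \ref{l:resolvent} after multiplying on the left by $T_\De$ and on the right by $D\phi(\De)$, identify the main term with the target through the resolvent coincidence, and estimate the residual pieces using Equation \eqref{eq:basic} and $\|DR(\la)\|_\infty \leq (1+\la)^{-1/2}$. Two details deserve correction, however. First, identifying the main term with the target requires only a single application of the commutator axiom (with $a = x^* x x^* \in \sA$): one has $D_x\phi(\De^2) = \phi(x)D\phi(x^*xx^*)\phi(\De) = \phi(\De^2)D\phi(\De) + \phi(x)d(x^*xx^*)\phi(\De)$, so there is no iteration; the residual carries an outer $T_\De\wit{R_x}(\la\phi(\De^2))\phi(x)$ and a bounded $d(x^*xx^*)\phi(\De)$, and its norm is already $O((1+\la)^{-3/4})$, not $O((1+\la)^{-1})$ as you claim. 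Second, and more importantly, your account of where the $3/4$ exponent comes from is off. The bottleneck terms are exactly those that begin with $T_\De\wit{R_x}(\la\phi(\De^2))\phi(x)$, and one shows directly from \eqref{eq:basic} that
\[
\|T_\De\wit{R_x}(\la\phi(\De^2))\phi(x)\|_\infty^2 = \|\phi(\De)\wit{R_x}\phi(\De)\wit{R_x}\phi(\De)\|_\infty \leq \|\phi(\De)\wit{R_x}\phi(\De)\|_\infty\,\|\wit{R_x}\phi(\De)\|_\infty \leq \tfrac{2}{1+\la}\cdot\sqrt{2}(1+\la)^{-1/2},
\]
so $\|T_\De\wit{R_x}(\la\phi(\De^2))\phi(x)\|_\infty \leq 2^{3/4}(1+\la)^{-3/4}$. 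In the one summand where $DR(\la)$ does occur alongside this prefactor (the $\phi(x)d(x^*xx^*)D$ piece of Lemma \ref{l:resolvent}), it appears as $DR(\la)D\phi(\De)$, which is only $O(1)$, so there is no extra $(1+\la)^{-1/4}$ coming from a ``square-root interpolation with $\|DR(\la)\|$.'' The $3/4$ is produced entirely by the prefactor $T_\De\wit{R_x}\phi(x)$, via the interpolation $\sqrt{(1+\la)^{-1}\cdot(1+\la)^{-1/2}} = (1+\la)^{-3/4}$ as above. Your proposal is otherwise sound and follows the paper's strategy; it just needs the estimate for the prefactor carried out precisely.
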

\begin{proof}
Since $(\sA,E,D)$ is a half-closed chain and $(E_x, D_x, \De)$ is an unbounded modular cycle we obtain that the difference
\[
T_\De R(\la) D \phi(\De) - \De R_x(\la \De^2) D_x T_{\De^2} : \Tex{Dom}(D \phi(\De)) \to E_x
\]
extends to a compact operator $M_\la : E \to E_x$ for all $\la \geq 0$. Indeed, this is already true for each of the terms viewed separately. So we only need to prove the norm-estimate. To this end, we let $\xi \in \Tex{Dom}(D \phi(\De))$ and compute that
\[
\begin{split}
& ( T_\De R(\la) D \phi(\De) - \De R_x(\la \De^2) D_x T_{\De^2})(\xi) \\
& \q = T_\De R(\la) D \phi(\De)(\xi) - T_\De \wit{R_x}(\la \phi(\De^2)) \phi(x) D \phi(x^* \De^2)(\xi) \\
& \q = T_\De R(\la) D \phi(\De)(\xi) - T_\De \wit{R_x}(\la \phi(\De^2))  \phi(\De^2) D \phi(\De)(\xi) \\
& \qqq  - T_\De \wit{R_x}(\la \phi(\De^2)) \phi(x) d(x^* x x^*) \phi(\De)(\xi) \, .
\end{split}
\]
Since $\| T_\De \wit{R_x}(\la \phi(\De^2)) \phi(x) \|_\infty \leq 2^{3/4} \cd (1 + \la)^{-3/4}$ by the estimate in Equation \eqref{eq:basic} we may focus on the difference
\[
T_\De R(\la) D \phi(\De)(\xi) - T_\De \wit{R_x}(\la \phi(\De^2)) \phi(\De^2) D \phi(\De)(\xi) \, .
\]
However, using Lemma \ref{l:resolvent} we get that
\[
\begin{split}
& T_\De R(\la) D \phi(\De)(\xi) - T_\De \wit{R_x}(\la \phi(\De^2)) \phi(\De^2) D \phi(\De)(\xi) \\
& \q = T_\De \wit{R_x}(\la \phi(\De^2)) \big( 1 - \phi(\De^2) + \phi(x) d(x^* x x^*) D\big)  R(\la) D \phi(\De)(\xi) \\
& \qqq + T_\De \big( \wit{D_x} \wit{R_x}(\la \phi(\De^2) ) \big)^* \phi(x) d(x^*) R(\la) D \phi(\De)(\xi) \, .
\end{split}
\]
The result of the lemma then follows from the basic estimate $\| D R(\la) \|_\infty \leq (1 + \la)^{-1/2}$ and the estimate in Equation \eqref{eq:basic} a few times.
\end{proof}

\begin{prop}\label{p:stramodu}
The difference
\[
T_{\De^2} F_D - G_{(D_x,\De)} T_{\De^2} : \Tex{Dom}(D) \to E_x
\]
extends to a compact operator from $E$ to $E_x$.
\end{prop}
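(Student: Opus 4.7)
The plan is to express both $T_{\De^2} F_D$ and $G_{(D_x,\De)} T_{\De^2}$ as absolutely convergent Bochner integrals and estimate the difference of integrands pointwise in $\la$. Since $\|DR(\la)\|_\infty \leq (1+\la)^{-1/2}$, the integral formula $F_D = \pi^{-1}\int_0^\infty \la^{-1/2} DR(\la)\, d\la$ converges in operator norm, and hence $T_{\De^2} F_D = \pi^{-1}\int_0^\infty \la^{-1/2} T_{\De^2} DR(\la)\, d\la$ in $\mathbb{L}(E,E_x)$. For $\xi \in \Tex{Dom}(D)$, condition $(2)$ of the half-closed chain applied to $x^* \in \sA$ (using $\xi \in \Tex{Dom}(D)\su\Tex{Dom}(D^*)$) gives $T_\De\xi = \phi(xx^*)\xi \in x\big(\Tex{Dom}(D)\big) \su \Tex{Dom}(D_x)$ by Proposition \ref{p:loca}, so $T_{\De^2}\xi \in \De\big(\Tex{Dom}(D_x)\big)$ and $G_{(D_x,\De)} T_{\De^2}\xi = \pi^{-1}\int_0^\infty \la^{-1/2}\De R_x(\la\De^2) D_x T_{\De^2}\xi\, d\la$ is absolutely convergent in $E_x$. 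It suffices to show that for each $\la \geq 0$, the operator $T_{\De^2}DR(\la) - \De R_x(\la\De^2) D_x T_{\De^2} : \Tex{Dom}(D)\to E_x$ extends to a compact operator $K_\la$ of norm bounded by $C(1+\la)^{-3/4}$; integrability of $\la^{-1/2}(1+\la)^{-3/4}$ on $(0,\infty)$ then yields the desired compactness.

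The key decomposition is
\[
T_{\De^2} DR(\la) - \De R_x(\la\De^2) D_x T_{\De^2} \;=\; N_\la + M_\la,
\]
where $M_\la := T_\De R(\la) D\phi(\De) - \De R_x(\la\De^2) D_x T_{\De^2}$ is precisely the operator of Lemma \ref{l:resest}, known to extend to a compact operator with $\|M_\la\|_\infty \leq C(1+\la)^{-3/4}$, and $N_\la := T_{\De^2} DR(\la) - T_\De R(\la) D\phi(\De) = T_\De\big[\phi(\De) DR(\la) - R(\la) D\phi(\De)\big]$ on $\Tex{Dom}(D)$. Applying the derivation $D\phi(\De) = \phi(\De) D + d(\De)$ on $\Tex{Dom}(D)$ rewrites $N_\la$ as
\[
N_\la = T_\De\big[\phi(\De) DR(\la) - R(\la)\phi(\De) D\big] - \phi(\De) R(\la) d(\De).
\]
The residual $\phi(\De) R(\la) d(\De)$ is compact (since $\phi(\De) R(\la)\in\mathbb{K}(E)$, a consequence of half-closed chain condition $(1)$ together with a standard resolvent identity) and has norm $\leq C(1+\la)^{-1}$. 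For the main term, we use the commutator identity from the proof of Theorem \ref{t:kasphalf},
\[
[DR(\la),\phi(\De)] = -DD^*R^*(\la) d(\De) R(\la) - DR(\la) d(\De) DR(\la) + d(\De) R(\la),
\]
together with the companion identity $[R(\la),\phi(\De)] = -R(\la)[D^*d(\De) + d(\De) D] R(\la)$ valid as bounded operators on $E$, to express $\phi(\De) DR(\la) - R(\la)\phi(\De) D$ as a linear combination of products of $d(\De)$, $R(\la)$, $R^*(\la)$, $DR(\la)$, and $DD^*R^*(\la)$.

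After left multiplication by $T_\De = \phi(\De)$, each resulting term becomes a compact operator of norm $\leq C(1+\la)^{-3/4}$: compactness comes from the half-closed chain's compactness axiom (applied via $\phi(\De) R(\la),\, \phi(\De) R^*(\la) \in \mathbb{K}(E)$), while the required decay follows from Cauchy--Schwarz--type estimates in the Hilbert $C^*$-module using $\|DR(\la)\|_\infty \leq (1+\la)^{-1/2}$, $\|R(\la)\|_\infty \leq (1+\la)^{-1}$, and Equation \eqref{eq:basic} when factors of $\wit{R_x}$ are relevant. The main obstacle is controlling the asymmetry arising from $D$ being symmetric but not selfadjoint: the bounded operators $DR(\la)$ and $(DR(\la))^* = R(\la) D^*$ do not coincide, so the naive commutator $[\phi(\De),DR(\la)]$ alone does not account for the full discrepancy $\phi(\De) DR(\la) - R(\la) D\phi(\De)$. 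The resolution is that the extra leading factor $\phi(\De) = \phi(xx^*)$ supplied by $T_\De$ absorbs this asymmetry by providing both the required compactness and the improved $(1+\la)^{-3/4}$ decay, in exact analogy with the Cauchy--Schwarz step $\|T_\De\wit{R_x}(\la\phi(\De^2))\phi(x)\|_\infty \leq 2^{3/4}(1+\la)^{-3/4}$ from the proof of Lemma \ref{l:resest}. Assembling the pointwise estimates, the norm-convergent integral
\[
T_{\De^2} F_D - G_{(D_x,\De)} T_{\De^2} \;=\; \frac{1}{\pi}\int_0^\infty \la^{-1/2}(N_\la + M_\la)\, d\la
\]
defines a compact operator from $E$ to $E_x$, as claimed.
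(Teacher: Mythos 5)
Your decomposition $T_{\De^2} DR(\la) - \De R_x(\la\De^2) D_x T_{\De^2} = N_\la + M_\la$ is algebraically correct, and invoking Lemma~\ref{l:resest} for $M_\la$ is precisely what the paper does. However, the treatment of $N_\la$ has a genuine gap. You assert that each piece of $N_\la$, after left multiplication by $T_\De$, acquires a norm bound $C(1+\la)^{-3/4}$, and you appeal to the Cauchy--Schwarz estimate $\| T_\De \wit{R_x}(\la \phi(\De^2)) \phi(x) \|_\infty \leq 2^{3/4}(1 + \la)^{-3/4}$ from Equation~\eqref{eq:basic} as the source of this improved decay. But $N_\la = T_\De\big[\phi(\De) DR(\la) - R(\la) D\phi(\De)\big]$ contains no factor of $\wit{R_x}(\la\phi(\De^2))$ whatsoever: every resolvent appearing in $N_\la$ is $R(\la) = (1+\la+D^*D)^{-1}$ or $\ov{R}(\la)$, for which one only has the bounds $\|DR(\la)\|_\infty \leq (1+\la)^{-1/2}$, $\|R(\la)\|_\infty \leq (1+\la)^{-1}$. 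A direct check shows that $\phi(\De) R(\la) D\phi(\De)$ extends to the bounded operator $\phi(\De)(DR(\la))^*\phi(\De)$ of norm $\lesssim (1+\la)^{-1/2}$, and likewise $\phi(\De^2) DR(\la)$ has norm $\lesssim (1+\la)^{-1/2}$; there is no visible cancellation improving this, so the naive bound on $\|N_\la\|_\infty$ is only $O\big((1+\la)^{-1/2}\big)$. Since $\int_0^\infty \la^{-1/2}(1+\la)^{-1/2}\,d\la$ diverges, your claimed absolutely convergent integral for the $N_\la$-contribution is not justified, and the argument does not close. The passage where you acknowledge the "asymmetry arising from $D$ being symmetric but not selfadjoint" and say the extra $\phi(\De)$ factor "absorbs this asymmetry by providing ... the improved $(1+\la)^{-3/4}$ decay" is exactly the unverified step.

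The paper circumvents this entirely. It observes first that $\phi(\De)F_D - F_{D^*}\phi(\De) = [\phi(\De),F_D] + (F_D - F_D^*)\phi(\De)$ lies in $\mathbb{K}(E)$, which is a direct consequence of $(E,F_D)$ being a Kasparov module (Theorem~\ref{t:kasphalf}) and requires no pointwise-in-$\la$ control. Hence $T_{\De^2}F_D = T_\De\phi(\De)F_D \sim T_\De F_{D^*}\phi(\De)$ modulo $\mathbb{K}(E,E_x)$. In other words, the entire $N_\la$-contribution is exactly $T_\De\big[\phi(\De)F_D - F_{D^*}\phi(\De)\big]$, and this is handled at the integrated level, not term by term in $\la$. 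After this replacement one has $T_\De F_{D^*}\phi(\De)(\xi) = \frac{1}{\pi}\int_0^\infty \la^{-1/2} T_\De R(\la) D\phi(\De)(\xi)\,d\la$ for $\xi \in \Tex{Dom}(D)$, so the remaining difference against $G_{(D_x,\De)}T_{\De^2}$ is precisely $\frac{1}{\pi}\int_0^\infty \la^{-1/2} M_\la\,d\la$, where $M_\la$ is the operator of Lemma~\ref{l:resest} and does have the $(1+\la)^{-3/4}$ decay (precisely because it contains the $\wit{R_x}(\la\phi(\De^2))$ factor to which Equation~\eqref{eq:basic} applies). You should make the same move: treat the $N_\la$ piece via the compactness of $\phi(\De)F_D - F_{D^*}\phi(\De)$ rather than attempting a pointwise estimate that the operators do not support.
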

\begin{proof}
Since $\phi(\De) F_D - F_{D^*} \phi(\De) : E \to E$ is compact, we only need to show that
\[
T_\De F_{D^*} \phi(\De) - G_{(D_x,\De)} T_{\De^2} : \Tex{Dom}(D) \to E_x
\]
extends to a compact operator from $E$ to $E_x$. Now, recall that
\[
T_\De F_{D^*} \phi(\De)(\xi)
= \frac{1}{\pi} \int_0^\infty \la^{-1/2} T_\De (1 + \la + D^* D)^{-1} D \phi(\De)(\xi) \, d\la
\]
for all $\xi \in \Tex{Dom}(D)$. The result of the proposition now follows by Lemma \ref{l:resest} since
\[
\begin{split}
& T_\De D^* (1 + DD^*)^{-1/2} \phi(\De)(\xi) - 
G_{(D_x,\De)} T_{\De^2}(\xi) \\
& \q = \frac{1}{\pi} \int_0^\infty \la^{-1/2} \big( T_\De (1 + \la + D^* D)^{-1} D \phi(\De) \\ 
& \qqq \qqq - \De R_x(\la \De^2) D_x T_{\De^2} \big)(\xi) \, d\la \\
& \q = \frac{1}{\pi} \int_0^\infty \la^{-1/2} M_\la(\xi) \, d\la \, . \qedhere
\end{split}
\]
\end{proof}

Remark that it follows from the above proposition that the unbounded operator
\[
G_{(D_x,\De)} T_{\De^2} : \Tex{Dom}(D) \to E_x
\]
extends to a bounded adjointable operator on $E_x$.

\begin{prop}\label{p:connectionI}
The difference
\[
F_{D_x} T_{xa} -  T_{xa} F_D : E \to E_x
\]
is a compact operator for all $a \in A$.
\end{prop}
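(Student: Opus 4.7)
The plan is to use the modular transform $G := G_{(D_x, \Delta)}$ from Subsection \ref{ss:modular} as a bridge between $F_D$ and $F_{D_x}$. The main intermediate claim to establish is
\[
F_{D_x} T_{\Delta^8} - T_{\Delta^8} F_D \in \mathbb{K}(E, E_x).
\]
To prove it, I would exploit the two factorizations $T_{\Delta^8} = \phi_x(\Delta^6) T_{\Delta^2} = T_{\Delta^2} \phi(\Delta^6)$. Combining Theorem \ref{t:modumodu} with right multiplication by $T_{\Delta^2}$ gives $F_{D_x} T_{\Delta^8} \equiv G T_{\Delta^8}$ modulo $\mathbb{K}(E, E_x)$. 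Combining the Kasparov module commutator $[\phi(\Delta^6), F_D] \in \mathbb{K}(E)$ from Theorem \ref{t:kasphalf} with Proposition \ref{p:stramodu} multiplied on the right by $\phi(\Delta^6)$ yields $T_{\Delta^8} F_D \equiv G T_{\Delta^8}$ modulo $\mathbb{K}(E, E_x)$. Subtracting the two congruences proves the intermediate claim.

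Next I would extend this to elements of the form $y = \Delta^8 z$ with $z \in I_x := \overline{xA}$. Writing $T_{\Delta^8 z} = T_{\Delta^8} \phi(z)$, the difference decomposes as
\[
F_{D_x} T_{\Delta^8 z} - T_{\Delta^8 z} F_D = (F_{D_x} T_{\Delta^8} - T_{\Delta^8} F_D) \phi(z) + T_{\Delta^8} [\phi(z), F_D],
\]
and both summands lie in $\mathbb{K}(E, E_x)$ by the preceding step and a further application of the Kasparov commutator for $z \in A$.

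Finally I would show that $\Delta^8 I_x$ is norm-dense in $I_x$. For $y = xa \in xA$, using the identity $(xx^*)^8 x = x (x^*x)^8$, the approximants $y_k := x (x^*x)^8 (x^*x + 1/k)^{-8} a \in \Delta^8 I_x$ converge to $y$ in $A$-norm, as follows from the elementary spectral estimate $\| x \cdot (1/k)(x^*x + 1/k)^{-1} \|_\infty \leq (2\sqrt{k})^{-1}$. Since $\mathbb{K}(E, E_x)$ is norm-closed and $y \mapsto F_{D_x} T_y - T_y F_D$ is norm-continuous on $I_x$, compactness of $F_{D_x} T_y - T_y F_D$ extends to all $y \in I_x$; specializing to $y = xa$ then gives the proposition.

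The main technical hurdle is in the first step: one must keep careful track of which side of the composition $T_{\Delta^8}$ factors through $E_x$ and which through $E$. The fact that both factorizations $\phi_x(\Delta^6) T_{\Delta^2}$ and $T_{\Delta^2} \phi(\Delta^6)$ are available is precisely what allows Theorem \ref{t:modumodu} (internal to $E_x$ via the modular cycle structure) and Proposition \ref{p:stramodu} (bridging $E$ to $E_x$ via the half-closed chain structure) to become simultaneously applicable, with the modular transform $G$ serving as the common intermediate object.
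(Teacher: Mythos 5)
Your proof is correct and follows essentially the same route as the paper's: both use the modular transform $G_{(D_x,\De)}$ as the bridge between $F_{D_x}$ and $F_D$ via Theorem \ref{t:modumodu} and Proposition \ref{p:stramodu}, combined with the Kasparov commutator $[\phi(b),F_D]\in\mathbb{K}(E)$ and an approximation argument. The only cosmetic difference is that you pivot on $\De^8=\De^6\cd\De^2=\De^2\cd\De^6$ whereas the paper uses $\De^7=\De^6\cd\De=\De^2\cd\De^5$, and you spell out the density of $\De^8 I_x$ in $I_x$ slightly more explicitly; the underlying structure is identical.
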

\begin{proof}
Since $[\phi(b), F_D ] \in \mathbb{K}(E)$ for all $b \in A$ and since $\De^7(1/n + \De^7)^{-1} x \to x$ in the norm on $A$, it suffices to show that
\[
F_{D_x} T_{\De^7} - T_{\De^7} F_D : E \to E_x
\]
is a compact operator. But now Proposition \ref{p:stramodu} and Theorem \ref{t:modumodu} imply that the following identities hold modulo $\mathbb{K}(E,E_x)$:
\[
\begin{split}
F_{D_x} T_{\De^7} - T_{\De^7} F_D 
& \sim F_{D_x} T_{\De^7}  - T_{\De^2} F_D \phi(\De^5) \\
& \sim F_{D_x} T_{\De^7} - \Tex{cl}( G_{(D_x,\De)} T_{\De^2} ) \phi(\De^5) \\
& = F_{D_x} \De^6 T_{\De} - \Tex{cl}( G_{(D_x,\De)} \De^6 ) T_{\De} \sim 0 \, . \qedhere
\end{split}
\]
%
\end{proof}




\section{Kucerovsky's theorem}\label{s:kuce}
Let us fix three $C^*$-algebras $A,B$ and $C$ with $A$ separable and $B$ and $C$ both $\si$-unital. Throughout this section we will assume that $(\sA,E_1,D_1)$, $(\sB,E_2,D_2)$ and $(\sA,E,D)$ are even half-closed chains from $A$ to $B$, from $B$ to $C$ and from $A$ to $C$, respectively. We denote the associated $*$-homomorphisms by $\phi_1 : A \to \mathbb{L}(E_1)$, $\phi_2 : B \to \mathbb{L}(E_2)$ and $\phi : A \to \mathbb{L}(E)$ and the $\zz/2\zz$-grading operators by $\ga_1 : E_1 \to E_1$, $\ga_2 : E_2 \to E_2$ and $\ga : E \to E$, respectively. We will moreover assume that $E := E_1 \hot_{\phi_2} E_2$ agrees with the interior tensor product of the $C^*$-correspondences $E_1$ and $E_2$. In particular, we assume that $\phi(a) = \phi_1(a) \hot 1$ for all $a \in A$ and that $\ga = \ga_1 \hot \ga_2$. 

We will denote the bounded transforms of our half-closed chains by $(E_1, F_{D_1})$, $(E_2,F_{D_2})$ and $(E,F_D)$ and the corresponding classes in KK-theory by $[E_1,F_{D_1}] \in KK_0(A,B)$, $[E_2,F_{D_2}] \in KK_0(B,C)$ and $[E,F_D] \in KK_0(A,C)$. We may then form the interior Kasparov product
\[
[E_1, F_{D_1}] \hot_B [E_2,F_{D_2}] \in KK_0(A,C)
\]
and it becomes a highly relevant question to find an explicit formula for this class in $KK_0(A,C)$. 

In this section we shall find conditions on the half-closed chains $(\sA,E_1,D_1)$, $(\sB,E_2,D_2)$ and $(\sA,E,D)$ entailing that the identity
\[
[E,F_D] = [E_1,F_{D_1}] \hot_B [E_2,F_{D_2}]
\]
holds in $KK_0(A,C)$. This kind of theorem was first proved by Kucerovsky in \cite{Kuc:KUM} under the stronger assumption that the half-closed chains $(\sA,E_1,D_1)$, $(\sB,E_2,D_2)$ and $(\sA,E,D)$ were in fact unbounded Kasparov modules. Thus under the strong assumption that all the involved symmetric and regular unbounded operators were in fact \emph{selfadjoint}. As in the case of Kucerovsky's theorem we rely on the work of Connes and Skandalis for computing the interior Kasparov product, see \cite{CoSk:LIF}.

We recall from \cite[Theorem A.3]{CoSk:LIF} that an even Kasparov module $(E,F)$ from $A$ to $C$ is the \emph{Kasparov product} of the even Kasparov modules $(E_1,F_1)$ and $(E_2,F_2)$ from $A$ to $B$ and from $B$ to $C$, respectively, when the following holds:
\begin{itemize}
\item $E = E_1 \hot_{\phi_2} E_2$, $\phi = \phi_1 \hot 1$.
\item For every homogeneous $\xi \in E_1$ we have that
\begin{equation}\label{eq:conncond}
F T_\xi - (-1)^{\pa \xi} T_\xi F_2 \, , \, \, F^* T_\xi - (-1)^{\pa \xi} T_\xi F_2^* \in \mathbb{K}(E_2,E) \, ,
\end{equation}
where $T_\xi : E_2 \to E$ is defined by $T_\xi(y) := \xi \hot \eta$ for all $\eta \in E_2$ and where $\pa \xi \in \{0,1\}$ denotes the degree of $\xi \in E_1$.
\item There exists a $\nu < 2$ such that
\begin{equation}\label{eq:posicond}
\big(  (F_1 \hot 1)^* \cd F^* + F \cd (F_1 \hot 1) \big) \cd \phi(a^*a) + \nu \cd \phi(a^* a)
\end{equation}
is positive in the Calkin algebra $\mathbb{L}(E)/\mathbb{K}(E)$ for all $a \in A$.
\end{itemize}

The condition in Equation \eqref{eq:conncond} is often referred to as the \emph{connection condition} and the condition in Equation \eqref{eq:posicond} is referred to as the \emph{positivity condition}.

Before we state our conditions on half-closed chains we recall that the odd symmetric and regular unbounded operator $D_1 : \Tex{Dom}(D_1) \to E_1$ can be promoted to an odd symmetric and regular unbounded operator $D_1 \hot 1 : \Tex{Dom}(D_1 \hot 1) \to E_1 \hot_{\phi_2} E_2$ with resolvent $(1 + D_1^* D_1)^{-1} \hot 1 \in \mathbb{L}(E_1 \hot_{\phi_2} E_2)$.
%

We now introduce the analogues for the above connection and positivity condition for half-closed chains. They will be shown in Theorem \ref{t:kuce} below to indeed correspond to the above two conditions for Kasparov modules.

\begin{defn}
\label{defn:conn-cond}
Given three even half-closed chains $(\sA,E_1,D_1)$, \newline $(\sB,E_2,D_2)$ and $(\sA,E_1 \hot_{\phi_2} E_2,D)$ as above, the {\em connection condition} demands that there exist a dense $\sB$-submodule $\sE_1 \su E_1$ and cores $\sE_2$ and $\sE$ for $D_2 : \Tex{Dom}(D_2) \to E_2$ and $D : \Tex{Dom}(D) \to E$, respectively, such that
\begin{itemize}
\item[(a)] For each $\xi \in \sE_1$:
\[
T_\xi( \sE_2 ) \su \Tex{Dom}(D) \, \, , \q T_\xi^*(\sE) \su \Tex{Dom}(D_2) \, \, , \q \ga_1(\xi) \in \sE_1 \, .
\]
\item[(b)] For each homogeneous $\xi \in \sE_1$, the graded commutator
\[
D T_\xi - (-1)^{\pa \xi} T_\xi D_2 : \sE_2 \to E
\]
extends to a bounded operator $L_\xi : E_2 \to E$.
\end{itemize}
\end{defn}


\begin{defn}
\label{defn:loca-subset}
Given three even half-closed chains $(\sA,E_1,D_1)$, \newline $(\sB,E_2,D_2)$ and $(\sA,E_1 \hot_{\phi_2} E_2,D)$ as above, a {\em localizing subset} is a \emph{countable} subset $\La \su \sA$ with $\La = \La^*$ such that
\begin{itemize}
\item[(a)] The subspace 
\[
\La \cd A := \Tex{span}_{\mathbb{C}}\{ x \cd a \mid x \in \La \, , \, \, a \in A \} \subseteq A
\]
is norm-dense. 
\item[(b)] The commutator
\[
[D_1 \hot 1, \phi(x) ] : \Tex{Dom}(D_1 \hot 1) \to E
\]
is  \emph{trivial} for all $x \in \La$.
\item[(c)] We have the domain inclusion
\[
\Tex{Dom}(D) \cap \Tex{Im}(\phi(x^* x)) \su \Tex{Dom}(D_1 \hot 1) \, ,
\]
for all $x \in \La$.
\end{itemize}
\end{defn}
%

\begin{defn}
\label{defn:loca-pos-cond}
Given three even half-closed chains $(\sA,E_1,D_1)$, \newline $(\sB,E_2,D_2)$ and $(\sA,E_1 \hot_{\phi_2} E_2,D)$ and a localizing subset $\Lambda \su \sA$, the {\em local positivity condition} requires that for each $x \in \La$, there exists a constant $\ka_x > 0$ such that 
\[
\begin{split}
& \binn{ (D_1 \hot 1) \phi(x^*) \xi, D \phi(x^*) \xi } + \inn{ D \phi(x^*) \xi, (D_1 \hot 1) \phi(x^*) \xi} \\
& \q \geq - \ka_x \cd \inn{\xi, \xi} \, ,
\end{split}
\]
for all $\xi \in \Tex{Im}(\phi( x)) \cap \Tex{Dom}( D \phi(x^*) )$.
\end{defn}

Note that the local positivity condition makes sense because of (d) in Definition \ref{defn:loca-subset}. Indeed, for each $\xi \in \Tex{Im}(\phi( x)) \cap \Tex{Dom}( D \phi(x^*) )$ we have that
\[
\phi(x^*) \xi \in \Tex{Im}(\phi(x^* x)) \cap \Tex{Dom}(D) \su \Tex{Dom}(D_1 \hot 1) \, .
\]

\begin{rem}
Suppose that $\sA \subseteq A$ is unital and that $\phi_1(A) \cd E_1 \su E_1$ is norm-dense. Then the half-closed chains $(\sA,E,D)$ and $(\sA,E_1,D_1)$ are in fact unbounded Kasparov modules (thus $D = D^*$ and $D_1 = D_1^*$). The choice $\La := \{1\} \subseteq \sA$ automatically satisfies the conditions (a) and (b) for a localizing subset in Definition \ref{defn:loca-subset} and the last condition (c) amounts to the requirement
\[
\Tex{Dom}(D) \subseteq \Tex{Dom}(D_1 \hot 1) \, .
\]
Moreover, in this case, the local positivity condition in Definition \ref{defn:loca-pos-cond} means that there exists a constant $\ka > 0$ such that
\[
\binn{ (D_1 \hot 1) \xi, D \xi} + \binn{D \xi, (D_1 \hot 1) \xi} \geq - \ka \cd \inn{\xi,\xi} \, ,
\]
for all $\xi \in \Tex{Dom}(D)$. Finally, the connection condition in Definition \ref{defn:conn-cond} can be seen to be equivalent to the connection condition applied by Kucerovsky in \cite{Kuc:KUM}. In this setting, we therefore recover the assumptions applied by Kucerovsky in \cite[Theorem 13]{Kuc:KUM} (except that the domain condition in \cite[Theorem 13]{Kuc:KUM} is marginally more flexible). The corresponding special case of Theorem \ref{t:kuce} here below, is therefore in itself an improvement to \cite[Theorem 13]{Kuc:KUM} because of the extra flexibility in the choice of localizing subset $\La \su \sA$ (if one is willing to disregard the minor domain issue mentioned earlier in this remark).
\end{rem}

We record the following convenient lemma, which can be proved by standard techniques:

\begin{lma}\label{l:fulldomain}
Suppose that the connection condition of Definition \ref{defn:conn-cond} holds. Then the connection condition holds for $\sE_2 := \Tex{Dom}(D_2)$ and $\sE := \Tex{Dom}(D)$. Moreover, $L_\xi : E_2 \to E$ is adjointable with
\[
(L_\xi)^*(\eta) = (T_\xi^* D - (-1)^{\pa \xi} D_2 T_\xi^*)(\eta) \q \forall \eta \in \Tex{Dom}(D)
\]
whenever $\xi \in \sE_1$ is homogeneous.
\end{lma}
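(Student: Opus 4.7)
\medskip

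\noindent\textbf{Proof plan for Lemma \ref{l:fulldomain}.}

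The plan is to proceed in three steps, exploiting that $D$ and $D_2$ are closed and that the given subspaces are cores for them.

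First, I would extend the inclusion $T_\xi(\sE_2) \subseteq \Tex{Dom}(D)$ to all of $\Tex{Dom}(D_2)$ and at the same time extend the graded commutator identity. For $\eta \in \Tex{Dom}(D_2)$, use the core property of $\sE_2$ to pick a sequence $\eta_n \in \sE_2$ with $\eta_n \to \eta$ and $D_2\eta_n \to D_2 \eta$. Then $T_\xi \eta_n \to T_\xi \eta$, and on $\sE_2$ the identity $D T_\xi \eta_n = (-1)^{\pa\xi} T_\xi D_2 \eta_n + L_\xi \eta_n$ shows that $D T_\xi\eta_n$ converges to $(-1)^{\pa\xi} T_\xi D_2\eta + L_\xi \eta$. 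Closedness of $D$ gives $T_\xi \eta \in \Tex{Dom}(D)$ together with the extended commutator formula $D T_\xi \eta - (-1)^{\pa\xi} T_\xi D_2 \eta = L_\xi \eta$ on $\Tex{Dom}(D_2)$.

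Next, I would establish the adjointability of $L_\xi$ with the claimed formula on the core $\sE$. For homogeneous $\xi \in \sE_1$ define a candidate $M_\xi : \sE \to E_2$ by $M_\xi(\zeta) := T_\xi^* D \zeta - (-1)^{\pa\xi} D_2 T_\xi^* \zeta$, which is well defined by assumption (a) of Definition \ref{defn:conn-cond}. For $\eta \in \sE_2$ and $\zeta \in \sE$, using symmetry of $D$ and $D_2$ (valid since $\Tex{Dom}(D)\subseteq\Tex{Dom}(D^*)$ and similarly for $D_2$) together with the adjoint relation between $T_\xi$ and $T_\xi^*$, compute
\[
\inn{L_\xi \eta, \zeta} = \inn{D T_\xi \eta, \zeta} - (-1)^{\pa \xi}\inn{T_\xi D_2 \eta,\zeta} = \inn{\eta, T_\xi^* D\zeta} - (-1)^{\pa\xi}\inn{\eta, D_2 T_\xi^* \zeta} = \inn{\eta, M_\xi \zeta}.
\]
By density of $\sE_2$ in $E_2$ and boundedness of $L_\xi$, this identity extends to all $\eta \in E_2$. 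Using the standard Hilbert $C^*$-module identity $\|m\| = \sup_{\|\eta\|\leq 1} \|\inn{\eta,m}\|$, one then obtains $\|M_\xi \zeta\| \leq \|L_\xi\|_\infty \cd \|\zeta\|$, so $M_\xi$ extends by continuity to a bounded operator $E \to E_2$ which, by the above pairing, is precisely the adjoint $L_\xi^*$.

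Finally, I would upgrade the formula for $L_\xi^*$ from the core $\sE$ to all of $\Tex{Dom}(D)$. Given $\zeta \in \Tex{Dom}(D)$, pick $\zeta_n \in \sE$ with $\zeta_n \to \zeta$ and $D\zeta_n \to D \zeta$. Then $T_\xi^* \zeta_n \to T_\xi^* \zeta$ and
\[
D_2 T_\xi^* \zeta_n = (-1)^{\pa\xi}\bigl(T_\xi^* D \zeta_n - L_\xi^* \zeta_n\bigr) \longrightarrow (-1)^{\pa\xi}\bigl(T_\xi^* D \zeta - L_\xi^* \zeta\bigr),
\]
so closedness of $D_2$ yields $T_\xi^* \zeta \in \Tex{Dom}(D_2)$ with the advertised identity. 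This also automatically upgrades $T_\xi^*(\sE) \subseteq \Tex{Dom}(D_2)$ to $T_\xi^*(\Tex{Dom}(D)) \subseteq \Tex{Dom}(D_2)$, completing the verification of the connection condition for $\sE_2 = \Tex{Dom}(D_2)$ and $\sE = \Tex{Dom}(D)$. The only delicate point is the adjointability argument in the middle step, which hinges on carefully combining symmetry of $D, D_2$ with the density of $\sE_2$ and the norm formula for Hilbert $C^*$-modules; everything else is a routine closed-graph style approximation.
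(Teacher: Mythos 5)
Your proof is correct and uses exactly the kind of closed-graph approximation and Hilbert-$C^*$-module duality the paper alludes to when it says the lemma ``can be proved by standard techniques''; there is no explicit proof in the paper to compare against, but the three-step structure you lay out (core-approximation to extend to $\Tex{Dom}(D_2)$, the pairing argument via symmetry of $D$, $D_2$ and density of $\sE_2$ to extract the adjoint $M_\xi$ on $\sE$, then another core-approximation with closedness of $D_2$ to reach $\Tex{Dom}(D)$) is precisely the expected argument. The only point worth making explicit is the one you already flag: in a Hilbert $C^*$-module boundedness of $L_\xi$ alone does not give adjointability, so exhibiting the concrete candidate $M_\xi$ on a dense subspace, proving its boundedness via $\|m\| = \sup_{\|\eta\|\le 1}\|\inn{\eta,m}\|$, and then extending it by continuity is genuinely needed and not optional.
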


The next lemma provides a convenient sufficient condition for verifying the inequality in Definition \ref{defn:loca-pos-cond}:

\begin{lma}
Let $x \in A$ and suppose that $\Tex{Im}(\phi(x^* x)) \cap \Tex{Dom}(D) \su \Tex{Dom}(D_1 \hot 1)$ and that there exists a constant $\ka_x > 0$ such that
\[
\inn{ (D_1 \hot 1) \eta, D \eta} + \inn{D \eta, (D_1 \hot 1) \eta} \geq - \ka_x \inn{\eta,\eta} \, ,
\]
for all $\eta \in \Tex{Im}(\phi(x^* x)) \cap \Tex{Dom}(D)$. Then we have that
\[
\begin{split}
& \binn{ (D_1 \hot 1) \phi(x^*) \xi, D \phi(x^*) \xi} + \binn{D \phi(x^*) \xi, (D_1 \hot 1) \phi(x^*) \xi} \\
& \q \geq - \| \phi(x) \|^2 \ka_x \inn{\xi,\xi} \, ,
\end{split}
\]
for all $\xi \in \Tex{Im}(\phi( x)) \cap \Tex{Dom}( D \phi(x^*) )$.
\end{lma}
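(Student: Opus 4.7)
The plan is to substitute $\eta := \phi(x^*)\xi$ directly into the hypothesis and then bound the resulting quantity $\inn{\eta,\eta}$ by $\|\phi(x)\|_\infty^2 \inn{\xi,\xi}$ via a standard Hilbert $C^*$-module estimate.

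First, I would verify that $\eta = \phi(x^*)\xi$ lies in the set on which the hypothesis provides its inequality. Writing $\xi = \phi(x)\zeta$ for some $\zeta \in E$, we get $\eta = \phi(x^* x)\zeta \in \Tex{Im}(\phi(x^* x))$. Membership of $\xi$ in $\Tex{Dom}(D\phi(x^*))$ is by definition the statement $\eta \in \Tex{Dom}(D)$. Hence $\eta \in \Tex{Im}(\phi(x^* x)) \cap \Tex{Dom}(D)$, and the assumed domain inclusion then places $\eta$ in $\Tex{Dom}(D_1 \hot 1)$ as well. Both inner products in the conclusion are therefore meaningful, and applying the hypothesis to $\eta$ yields
\[
\binn{(D_1 \hot 1)\phi(x^*)\xi, D\phi(x^*)\xi} + \binn{D\phi(x^*)\xi, (D_1 \hot 1)\phi(x^*)\xi} \geq -\ka_x \inn{\phi(x^*)\xi, \phi(x^*)\xi}.
\]

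The remaining step is to estimate $\inn{\phi(x^*)\xi, \phi(x^*)\xi}$ from above. Using that $\phi$ is a $*$-homomorphism, I would rewrite this inner product as $\inn{\xi, \phi(xx^*)\xi}$, and then invoke the operator inequality $\phi(xx^*) \leq \|\phi(xx^*)\|_\infty \cd 1 \leq \|\phi(x)\|_\infty^2 \cd 1$ in $\mathbb{L}(E)$ to conclude that
\[
\inn{\phi(x^*)\xi, \phi(x^*)\xi} \leq \|\phi(x)\|_\infty^2 \inn{\xi,\xi}
\]
as positive elements of the $C^*$-algebra $C$. Negating this reverses the inequality, and combining with the previous display produces the stated bound. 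The argument is essentially bookkeeping; the only subtle point is verifying that $\eta$ actually belongs to $\Tex{Dom}(D_1 \hot 1)$, which uses all three ingredients of the hypothesis simultaneously (the image condition on $\xi$, the domain condition on $\xi$, and the standing inclusion on $\Tex{Im}(\phi(x^*x)) \cap \Tex{Dom}(D)$).
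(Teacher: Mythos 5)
Your proof is correct and takes essentially the same route as the paper: substitute $\eta = \phi(x^*)\xi$ into the hypothesis and then bound $\inn{\phi(x^*)\xi,\phi(x^*)\xi}$ by $\|\phi(x)\|^2\inn{\xi,\xi}$. The paper states only the final estimate, leaving the domain bookkeeping implicit, whereas you spell it out; there is no substantive difference.
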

\begin{proof}
This follows immediately since 
\[
- \ka_x \inn{ \phi(x^*) \xi, \phi(x^*) \xi} \geq - \| \phi(x) \|^2 \ka_x \inn{\xi,\xi} \q  \forall \xi \in E \, . \qedhere
\]
\end{proof}

The next lemma is straightforward to prove by rescaling the elements in $\La$ by elements in $(0,\infty)$. It will nonetheless play a very important role:

\begin{lma}\label{l:rescaling}
Suppose that the local positivity condition of Definition \ref{defn:loca-pos-cond} holds with localizing subset $\La \subseteq \sA$. Then we may rescale the elements in $\La$ and obtain a localizing subset $\La' \su \sA$ such that the local positivity condition of Definition \ref{defn:loca-pos-cond} holds with the additional requirement that
\[
\ka_x = 1/4 \q \Tex{and} \q \| d(x^*) \phi(x) \|_\infty < 1 \q \forall x \in \La' \, .
\]
\end{lma}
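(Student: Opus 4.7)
The plan is to replace each $x \in \La$ by $t_x \cdot x$ for a carefully chosen positive scalar $t_x \in (0,\infty)$, and verify that all the conditions in Definition \ref{defn:loca-subset} and Definition \ref{defn:loca-pos-cond} are preserved, with the two additional normalizations appearing automatically. Specifically, for each $x \in \La$, I would choose
\[
t_x \in (0, \infty) \q \Tex{with} \q t_x \leq \tfrac{1}{2 \sqrt{\ka_x}} \q \Tex{and} \q t_x^2 \cd \max\big\{ \| d(x^*) \phi(x) \|_\infty,\, \| d(x) \phi(x^*) \|_\infty \big\} < 1.
\]
To preserve the $*$-symmetry $\La' = (\La')^*$, I would pair up each $x$ with $x^*$ in $\La$ and assign them a common scaling factor $t_x = t_{x^*}$ satisfying the above estimates for both $x$ and $x^*$ simultaneously.

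Most of the conditions for a localizing subset are stable under rescaling by positive scalars. For the density condition $(a)$, note that $\La' \cd A = \Tex{span}_\mathbb{C}\{ (t_x x) \cd a \mid x \in \La, a \in A \} = \La \cd A$, which is dense by assumption on $\La$. For the commutator condition $(b)$, the triviality of $[D_1 \hot 1, \phi(t_x x)] = t_x [D_1 \hot 1, \phi(x)]$ is immediate. For the domain condition $(c)$, we use $\Tex{Im}(\phi( (t_x x)^* (t_x x))) = \Tex{Im}(\phi(x^* x))$, so the required domain inclusion persists. Countability of $\La'$ follows from countability of $\La$.

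The crucial — and essentially only — computation concerns the local positivity condition. For $\xi \in \Tex{Im}(\phi(t_x x)) \cap \Tex{Dom}(D \phi((t_x x)^*))$, which coincides with $\Tex{Im}(\phi(x)) \cap \Tex{Dom}(D \phi(x^*))$, the sesquilinear form on the left-hand side of the positivity inequality scales by $t_x^2$:
\[
\begin{split}
& \binn{ (D_1 \hot 1) \phi((t_x x)^*) \xi,\, D \phi((t_x x)^*) \xi } + \inn{ D \phi((t_x x)^*) \xi,\, (D_1 \hot 1) \phi((t_x x)^*) \xi } \\
& \q = t_x^2 \Big( \binn{ (D_1 \hot 1) \phi(x^*) \xi,\, D \phi(x^*) \xi } + \inn{ D \phi(x^*) \xi,\, (D_1 \hot 1) \phi(x^*) \xi } \Big) \\
& \q \geq - t_x^2 \ka_x \inn{\xi, \xi} \geq - \tfrac{1}{4} \inn{\xi, \xi}.
\end{split}
\]
Hence $\ka_{t_x x} = 1/4$ is a valid constant. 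Since $\| d((t_x x)^*) \phi(t_x x) \|_\infty = t_x^2 \| d(x^*) \phi(x) \|_\infty < 1$ by the choice of $t_x$, the norm estimate is also verified. There is no real obstacle here; the only point of care is coordinating the scaling factors on the pair $\{x, x^*\}$ so that $\La'$ remains closed under the involution.
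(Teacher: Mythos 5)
Your proof is correct and follows exactly the rescaling strategy that the paper indicates (the paper gives no explicit proof, only the remark that it is ``straightforward to prove by rescaling the elements in $\La$ by elements in $(0,\infty)$''). You correctly identify that the domain $\Tex{Im}(\phi(x)) \cap \Tex{Dom}(D\phi(x^*))$, the range $\Tex{Im}(\phi(x^*x))$, and the span $\La \cdot A$ are all invariant under positive scalar rescaling, that the two sides of the positivity inequality scale as $t_x^2$, and that the involution-stability of $\La'$ requires coordinating $t_x = t_{x^*}$, which you handle with the maximum over the pair. Nothing is missing.
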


\begin{thm}\label{t:kuce}
Suppose that the three even half-closed chains $(\sA,E_1,D_1)$, $(\sB,E_2,D_2)$ and $(\sA,E_1 \hot_{\phi_2} E_2,D)$ satisfy the connection condition and the local positivity condition. Then $(E,F_D)$ is the Kasparov product of $(E_1,F_{D_1})$ and $(E_2,F_{D_2})$. In particular we have the identity
\[
[E,F_D] = [E_1,F_{D_1}] \hot_B [E_2,F_{D_2}]
\]
in the KK-group $KK_0(A,C)$.
\end{thm}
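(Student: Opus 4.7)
The plan is to verify, for the bounded transforms associated with our three half-closed chains, the two conditions of the Connes--Skandalis criterion \cite[Theorem A.3]{CoSk:LIF}: the connection condition \eqref{eq:conncond} and the positivity condition \eqref{eq:posicond}. The former will follow from an integral-representation calculation; the latter is the main obstacle and will be reduced to a selfadjoint positivity estimate via the localization machinery of Section \ref{s:locahalf}.

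For the connection condition, I would mimic the proof of Theorem \ref{t:kasphalf}. By Lemma \ref{l:fulldomain}, the hypothesis of Definition \ref{defn:conn-cond} furnishes, for each homogeneous $\xi \in \sE_1$, a bounded adjointable operator $L_\xi : E_2 \to E$ extending $D T_\xi - (-1)^{\pa\xi} T_\xi D_2$, and an adjoint $L_\xi^*$ extending $T_\xi^* D - (-1)^{\pa\xi} D_2 T_\xi^*$. Inserting $F_D = \tfrac{1}{\pi} \int_0^\infty \la^{-1/2} D (1 + \la + D^*D)^{-1}\, d\la$ and the analogous integral for $F_{D_2}$, a resolvent manipulation rewrites each integrand of $F_D T_\xi - (-1)^{\pa\xi} T_\xi F_{D_2}$ as a finite sum of terms, each containing a compact factor (from $\phi(a)(1+D^*D)^{-1}\in \mathbb{K}(E)$) and $L_\xi$ sandwiched between resolvents, with norm controlled uniformly by $\la^{-1/2}(1+\la)^{-1}$. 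Absolute norm convergence in $\mathbb{K}(E_2,E)$ yields the first inclusion in \eqref{eq:conncond}; the second is handled symmetrically with $L_\xi^*$. A density argument extends the conclusion from $\xi \in \sE_1$ to all homogeneous $\xi \in E_1$.

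The positivity condition is where the essential difficulty lies, since Kucerovsky's original argument exploits selfadjointness of $D$ and $D_1 \hot 1$, neither of which holds here. My plan is to reduce to the selfadjoint setting by localizing. By Lemma \ref{l:rescaling}, I may assume $\ka_x = 1/4$ and $\|d(x^*)\phi(x)\|_\infty < 1$ for every $x \in \La$. Fixing such an $x$, the element $\phi(x) \in \mathbb{L}(E)$ satisfies Assumption \ref{a:loca} for $D$, so Proposition \ref{p:loca} delivers a selfadjoint regular localization $D_x$ on $E_x := \Tex{cl}(\phi(x) E)$. Conditions (b)--(c) in Definition \ref{defn:loca-subset} ensure that $\phi(x)$ commutes with $D_1 \hot 1$ on a domain sufficiently large that the same procedure yields a selfadjoint regular restriction of $D_1 \hot 1$ to $E_x$, and the local positivity estimate of Definition \ref{defn:loca-pos-cond} then translates verbatim into a genuine selfadjoint positivity inequality on the core $\phi(x)(\Tex{Dom}(D))$ of $D_x$, with constant $1/4$. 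At this point the original selfadjoint Kucerovsky argument applies and produces positivity of $(F_{(D_1 \hot 1)|_{E_x}})^* F_{D_x}^* + F_{D_x} F_{(D_1 \hot 1)|_{E_x}}$ modulo $\mathbb{K}(E_x)$ with constant $\nu < 2$.

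The remaining step, which I expect to require the most bookkeeping, is to transfer this localized positivity back to $E$. Since $\La \cd A \su A$ is norm-dense by Definition \ref{defn:loca-subset}(a), it suffices to verify \eqref{eq:posicond} for $a = xb$ with $x \in \La$, $b \in A$; the factor $\phi(b)$ can be commuted past $F_D$ and $F_{D_1}\hot 1$ modulo compacts, reducing matters to
\[
\phi(x^*)\bigl((F_{D_1} \hot 1)^* F_D^* + F_D (F_{D_1} \hot 1)\bigr)\phi(x) + \nu\, \phi(x^* x) \geq 0
\]
in the Calkin algebra $\mathbb{L}(E)/\mathbb{K}(E)$. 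To match this with the positivity already established on $E_x$, one compares $\phi(x^*) F_D \phi(x)$ and $\phi(x^*)(F_{D_1} \hot 1)\phi(x)$ with the bounded transforms of the localized selfadjoint operators via the resolvent intertwiner of Lemma \ref{l:inv} and Proposition \ref{p:loca}; the conformal-twist error term $d(x)\phi(x^*)$ appearing there is precisely what the rescaling $\|d(x^*)\phi(x)\|_\infty < 1$ controls, and what remains is compact by the same kind of estimates developed in Section \ref{s:unbkas} (notably Lemma \ref{l:resest} and Proposition \ref{p:stramodu}). The hardest technical point will be carrying out this comparison for the square roots of resolvents involved in the bounded transforms while keeping uniform control of $\nu$ over $x \in \La$; norm continuity in $a$ of the left-hand side of \eqref{eq:posicond} will then yield the positivity condition for all $a \in A$, completing the proof.
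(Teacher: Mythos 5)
Your high-level strategy — prove the Connes--Skandalis connection condition directly with an integral representation, and reduce the positivity condition to estimates on the localizations $D_x$ — matches the paper. The sketch of the connection condition is consistent with Proposition \ref{p:connection}, and the idea of using $\La$ to reduce positivity to a statement on each $E_x$ matches the role of Proposition \ref{p:lampos}. However, there is a genuine gap at the heart of your positivity argument.

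You assert that after localizing, ``the original selfadjoint Kucerovsky argument applies and produces positivity of $(F_{(D_1 \hot 1)|_{E_x}})^* F_{D_x}^* + F_{D_x} F_{(D_1 \hot 1)|_{E_x}}$ modulo $\mathbb{K}(E_x)$.'' This step fails: localization makes $D_x$ selfadjoint (Proposition \ref{p:loca}), but it does nothing of the sort for $D_1 \hot 1$, which remains only symmetric. Condition (b) of Definition \ref{defn:loca-subset} gives commutation of $\phi(x)$ with $D_1 \hot 1$, hence a restriction $(D_1 \hot 1)|_{E_x}$ and of its resolvents $R_1(\mu)|_{E_x}$, $S_1(\mu)|_{E_x}$ (which are built out of $(D_1^* \hot 1)(D_1 \hot 1)$, not out of a selfadjoint operator). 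Kucerovsky's selfadjoint argument therefore cannot be invoked as a black box on $E_x$, and the whole point of the paper's Definition \ref{defn:loca-pos-cond} is that it is phrased in a way that sidesteps the missing domain $\Tex{Dom}((D_1 \hot 1)^*)$.

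What the paper actually does in place of ``apply Kucerovsky'' is the content of Lemmas \ref{l:analysis}, \ref{l:domQ}, \ref{l:prelalg}, \ref{l:algebra} and Proposition \ref{p:positivity}: introduce the quantities $Q_x(\xi) = 2\,\Tex{Re}\langle D\phi(x^*)\xi,\ (D_1\hot 1)\phi(x^*)\xi\rangle$ and the operator-valued kernels $M_j(\la,\mu,x)$, establish the algebraic identity $\sum_{j=1}^4 Q_j(\la,\mu,x)(T_x\xi) = 2\,\Tex{Re}\langle T_x\xi,\ S_x(\la)S_1(\mu)|_{E_x}T_x\xi\rangle$, and then integrate against $(\la\mu)^{-1/2}$ on $[0,\infty)^2$ to obtain the bounded-transform positivity with constant $4\kappa$. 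This double-integral argument is the genuinely new technical content and is not a repackaging of the selfadjoint case; your proposal would need to supply this (or an equivalent substitute) to close the gap. A secondary, smaller point: your reduction ``it suffices to verify \eqref{eq:posicond} for $a = xb$'' is not quite how Proposition \ref{p:lampos} proceeds — the paper instead forms $\Gamma := \sum_n \frac{1}{n^2 C_n} x_n^* x_n$, expands the positivity expression into a norm-convergent sum of manifestly positive pieces conjugated by $\Gamma^{1/2}$, and invokes density of $\Gamma A$; a plain density-of-$\{xb\}$ argument would not obviously propagate positivity to arbitrary $a$ since the set of $a$ satisfying the inequality is not a priori closed under addition.
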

\begin{proof}
Without loss of generality we may assume that $\ka_x = 1/4$ and that $\| d(x^*) \phi(x) \|_\infty < 1$ for all $x \in \La$.

We need to prove the connection condition in Equation \eqref{eq:conncond} and the positivity condition in Equation \eqref{eq:posicond} for the even Kasparov modules
$(E,F_D)$, $(E_1,F_{D_1})$ and $(E_2,F_{D_2})$. 

But these two conditions are proved in Proposition \ref{p:connection} and Proposition \ref{p:positivity} below, respectively. The positivity condition will be satisfied with $\nu = 1 = 4 \cd \ka_x$.
\end{proof}




\subsection{The connection condition}
We continue working in the setting explained in the beginning of Section \ref{s:kuce}.

Before proving our first proposition on the connection condition in Equation \eqref{eq:conncond}, it will be convenient to introduce some extra notation. For $\la \in [0,\infty)$, define the bounded adjointable operators
\[
\begin{split}
R(\la) & := (1 + \la + D^* D)^{-1} \, \, , \,  \ov{R}(\la) := (1 + \la + D D^*)^{-1} : E \to E \\
R_2(\la) & := (1 + \la + D_2^* D_2)^{-1} \, \, , \, \ov{R_2}(\la) := (1 + \la + D_2 D_2^*)^{-1} : E_2 \to E_2 \, .
\end{split}
\]

\begin{prop}\label{p:connection}
Suppose that the connection condition of Definition \ref{defn:conn-cond} holds. Then we have that
\[
F_D T_\xi - (-1)^{\pa \xi} T_\xi F_{D_2} \, , \, \, F_{D}^* T_\xi - (-1)^{\pa \xi} T_\xi F_{D_2}^* \in \mathbb{K}(E_2,E) \, ,
\]
for all homogeneous $\xi \in E_1$. 
\end{prop}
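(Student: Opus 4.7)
\emph{Paragraph 1 --- Reduction to a resolvent-level estimate.} The plan is to use the standard integral representation
\[
F_D = \frac{1}{\pi} \int_0^\infty \la^{-1/2} D R(\la) \, d\la \, , \q F_{D_2} = \frac{1}{\pi} \int_0^\infty \la^{-1/2} D_2 R_2(\la) \, d\la \, ,
\]
each convergent absolutely in operator norm. Setting $\ep := (-1)^{\pa \xi}$, the statement reduces to showing that
\[
M_\xi(\la) := D R(\la) T_\xi - \ep T_\xi D_2 R_2(\la) : E_2 \to E
\]
is a compact operator for every $\la > 0$, depends norm-continuously on $\la$, and satisfies an operator-norm estimate of the form $\| M_\xi(\la) \|_\infty \leq C \cd (1+\la)^{-\alpha}$ with $\alpha > 1/2$. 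By Lemma \ref{l:fulldomain} the connection condition holds with $\sE = \Tex{Dom}(D)$ and $\sE_2 = \Tex{Dom}(D_2)$, and for homogeneous $\xi \in \sE_1$ one then has the bounded adjointable operator $L_\xi = D T_\xi - \ep T_\xi D_2$ on $\Tex{Dom}(D_2)$, together with $L_\xi^* = T_\xi^* D - \ep D_2 T_\xi^*$ on $\Tex{Dom}(D)$. A direct inner-product pairing against vectors in $\Tex{Dom}(D)$ (standard for closed symmetric operators) upgrades these to $T_\xi(\Tex{Dom}(D_2^*)) \su \Tex{Dom}(D^*)$ with $D^* T_\xi - \ep T_\xi D_2^* = L_\xi$ on $\Tex{Dom}(D_2^*)$, which is what will be needed below.

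\emph{Paragraph 2 --- Structure of $M_\xi(\la)$.} Since $R_2(\la)$ carries $E_2$ into $\Tex{Dom}(D_2^* D_2) \su \Tex{Dom}(D_2)$ and $T_\xi$ then maps $\Tex{Dom}(D_2)$ into $\Tex{Dom}(D)$, the identity $\ep T_\xi D_2 = D T_\xi - L_\xi$ on $\Tex{Dom}(D_2)$ gives the clean decomposition
\[
M_\xi(\la) = D \bigl( R(\la) T_\xi - T_\xi R_2(\la) \bigr) + L_\xi R_2(\la) \, .
\]
For the first summand I will apply the resolvent identity in the form
\[
R(\la) T_\xi - T_\xi R_2(\la) = R(\la) \bigl( T_\xi D_2^* D_2 - D^* D T_\xi \bigr) R_2(\la) \, ,
\]
interpreted weakly against vectors in $\Tex{Dom}(D^* D)$ (as in the proofs of Proposition \ref{p:reg} and Lemma \ref{l:resolvent} earlier in the paper), and expand the inner difference using $D T_\xi = \ep T_\xi D_2 + L_\xi$ together with the extended adjoint identity from Paragraph 1. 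After bookkeeping, $M_\xi(\la)$ emerges as a finite sum of terms, each of the form $D R(\la) \cd S \cd R_2(\la)$ or $\ov{R}(\la) \cd S'  \cd D_2 R_2(\la)$ with $S, S'$ bounded adjointable operators built from $L_\xi, L_\xi^*, T_\xi$. Using the standard estimates $\| D R(\la) \|_\infty \leq (1+\la)^{-1/2}$, $\| R(\la) \|_\infty \leq (1+\la)^{-1}$ and the $D_2$-analogues, each summand satisfies a uniform bound of order $(1+\la)^{-1}$, which is comfortably $\la^{-1/2}$-integrable on $(0,\infty)$. Compactness of each summand follows by inserting an approximate unit of $B$: writing $T_\xi = \lim_n T_\xi \phi_2(b_n)$ in the strong topology together with $\phi_2(b_n) R_2(\la) \in \mathbb{K}(E_2)$ and $\phi(a) R(\la) \in \mathbb{K}(E)$ for $a \in A$, each summand is exhibited as a norm-limit of compact operators.

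\emph{Paragraph 3 --- Adjoint statement and main obstacle.} The statement for $F_D^* T_\xi - \ep T_\xi F_{D_2}^*$ follows by a completely parallel argument, replacing $D$ by $D^*$ and $D_2$ by $D_2^*$ throughout; the extended commutator identity $D^* T_\xi - \ep T_\xi D_2^* = L_\xi$ on $\Tex{Dom}(D_2^*)$ established in Paragraph 1 plays the role that $L_\xi$ did for $D$ itself. The main obstacle I foresee is the \emph{rigorous interpretation of the unbounded-operator identities} in Paragraph 2: because $D$ and $D_2$ are only symmetric, the element $L_\xi \eta$ for $\eta \in \Tex{Dom}(D_2^* D_2)$ is not a priori in $\Tex{Dom}(D^*)$, so the expansion of $D^* D T_\xi$ via the commutator identity must be read at the level of inner-product pairings against vectors of $\Tex{Dom}(D^* D)$ rather than as a genuine equality of unbounded operators. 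A secondary subtlety is to keep track of which terms genuinely need an approximate-unit reduction to become compact: it is the presence of the \emph{two} compactness conditions $\phi(a)(1 + D^*D)^{-1} \in \mathbb{K}(E)$ and $\phi_2(b)(1 + D_2^*D_2)^{-1} \in \mathbb{K}(E_2)$ that ultimately makes each summand compact, and this pair needs to be invoked in a well-organized way in the final step.
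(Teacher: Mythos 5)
Your overall strategy -- integral representation of the bounded transforms, pass the difference through a resolvent identity, and exploit the connection data to collect $(1+\la)^{-1}$-bounded summands -- is the same as the paper's. Your Paragraph~1 upgrade $T_\xi(\Tex{Dom}(D_2^*)) \subseteq \Tex{Dom}(D^*)$ with $D^* T_\xi - (-1)^{\partial\xi}T_\xi D_2^* = L_\xi$ on $\Tex{Dom}(D_2^*)$ is correct (pair against $\Tex{Dom}(D)$ using $L_\xi^* = T_\xi^*D - (-1)^{\partial\xi}D_2T_\xi^*$ from Lemma~\ref{l:fulldomain}). In fact this identity lets you bridge $D_2^*$ to $D_2$ directly, whereas the paper instead factorizes $\xi = \eta b_1 b_2$ and uses the commutator $d_2(b_1)$ to make the same bridge; in that sense your route is a mild streamlining and requires only one spare $\sB$-factor rather than two. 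With some care one indeed gets the clean identity
\[
R(\la)T_\xi - T_\xi R_2(\la) = -(-1)^{\partial\xi}\,R(\la)\,L_\xi\,D_2 R_2(\la) \;-\; D^*\ov{R}(\la)\,L_\xi\,R_2(\la) \, ,
\]
whose summands admit the $O((1+\la)^{-1})$ bound you want. (A word of caution: your intermediate step $D^*D T_\xi R_2(\la) - D^* L_\xi R_2(\la) - (-1)^{\partial\xi} L_\xi D_2 R_2(\la)$ is not legitimate as written, since $L_\xi R_2(\la)(E_2)$ need not lie in $\Tex{Dom}(D^*)$; one must keep the combination $D^*(D T_\xi - L_\xi)R_2(\la) = (-1)^{\partial\xi}D^* T_\xi D_2 R_2(\la)$ together.)

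However, there is a genuine gap in your compactness step. For $\xi \in \sE_1$ alone, the summands $D R(\la)\,L_\xi\,D_2 R_2(\la)$ and $(1+\la)\ov{R}(\la)\,L_\xi\,R_2(\la)$ are \emph{not} compact: neither $R(\la)$ nor $R_2(\la)$ is compact, and there is no $\phi(a)$- or $\phi_2(b)$-factor anywhere that could trigger the half-closed-chain compactness conditions. Your proposed fix -- ``writing $T_\xi = \lim_n T_\xi\phi_2(b_n)$ in the strong topology'' -- cannot work: compactness of a norm-convergent integral requires norm-convergent compact approximants, and $L_\xi\phi_2(b_n)R_2(\la) \to L_\xi R_2(\la)$ would need $\|(1-\phi_2(b_n))R_2(\la)\|_\infty \to 0$, which fails in general precisely because $R_2(\la)$ is not compact. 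The invocation of $\phi(a)R(\la)\in\mathbb{K}(E)$ is a red herring here since no $\phi(a)$ appears on the left of $R(\la)$ in your summands. The correct device (and the one the paper uses) is to exploit the norm-continuity of $\xi \mapsto F_D T_\xi - (-1)^{\partial\xi}T_\xi F_{D_2}$ to reduce first to $\xi$ of the form $\xi' b$ with $\xi' \in \sE_1$ and $b \in \sB$, write $T_\xi = T_{\xi'}\phi_2(b)$, pull $\phi_2(b)$ out past $F_{D_2}$ modulo compacts, and then place $\phi_2(b)$ on the right of every integrand, so that $R_2(\la)\phi_2(b)$ and $D_2 R_2(\la)\phi_2(b)$ in $\mathbb{K}(E_2)$ make each summand compact with a summable norm bound. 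That reduction is essential and is missing from your argument.
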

\begin{proof}
Without loss of generality we may assume that $\xi = \eta \cd b_1 b_2$ with $\eta \in \sE_1$ homogeneous and $b_1,b_2 \in \sB$. Using Lemma \ref{l:fulldomain} we compute as follows, for each $\la \in [0,\infty)$:
\[
\begin{split}
& R(\la) T_{\eta \cd b_1} - T_{\eta \cd b_1} R_2(\la)
= R(\la) T_{\eta \cd b_1} D_2^* D_2 R_2(\la)
- D^* D R(\la) T_{\eta \cd b_1} R_2(\la) \\
& \q = - R(\la) T_\eta \cd d_2(b_1) \cd D_2 R_2(\la)
- (-1)^{\pa \eta} R(\la) L_\eta \cd \phi_2( b_1) \cd D_2 R_2(\la) \\
& \qqq + (-1)^{\pa \eta} R(\la) D T_{\eta \cd b_1} \cd D_2 R_2(\la)
- D^* D R(\la) T_{\eta \cd b_1} R_2(\la) \\
& \q = - R(\la) \big( T_\eta \cd d_2(b_1) + (-1)^{\pa \eta} L_\eta \cd \phi_2(b_1) \big) \cd D_2 R_2(\la) \\
& \qqq - D^* \ov{R}(\la) L_{\eta \cd b_1} R_2(\la) \, ,
\end{split}
\]
where $d_2(b_1) : E_2 \to E_2$ is the bounded extension of the commutator $D_2 \phi_2(b_1) - \phi_2(b_1) D_2^* : \Tex{Dom}(D_2^*) \to E$. In particular, we may find a constant $C > 0$ such that
\begin{equation}\label{eq:estimate}
\big\| D R(\la) T_{\eta \cd b_1} - D T_{\eta \cd b_1} R_2(\la) \big\|_\infty \leq C \cd (1 + \la)^{-1} \, ,
\end{equation}
for all $\la \geq 0$.

We now use the integral formulae
\[
\begin{split}
F_D & = \frac{1}{\pi} D \cd \int_0^\infty \la^{-1/2} R(\la) \, d\la \\
F_{D_2} & = \frac{1}{\pi} D_2 \cd \int_0^\infty \la^{-1/2} R_2(\la) \, d\la
\end{split}
\]
for the bounded transforms. Indeed, using Lemma \ref{l:fulldomain} one more time, these formulae allow us to compute that
\begin{equation}\label{eq:commutator}
\begin{split}
F_D T_\xi & = F_D T_{\eta \cd b_1} \cd \phi_2(b_2) \\
& = \frac{1}{\pi} D \cd T_{\eta \cd b_1} \cd \int_0^\infty \la^{-1/2} R_2(\la) \cd \phi_2(b_2) \, d\la \\
& \q + \frac{1}{\pi} D \cd \int_0^\infty \la^{-1/2} \big( R(\la) T_{\eta \cd b_1} - T_{\eta \cd b_1} R_2(\la) \big) \cd \phi_2(b_2) \, d\la \\
& = (-1)^{\pa \xi} T_{\eta \cd b_1} F_{D_2} \cd \phi_2(b_2)
+ \frac{1}{\pi} \int_0^\infty \la^{-1/2} L_{\eta \cd b_1} \cd R_2(\la) \cd \phi_2(b_2) \, d\la \\
& \q + \frac{1}{\pi} \int_0^\infty \la^{-1/2} D \cd \big( R(\la) T_{\eta \cd b_1} - T_{\eta \cd b_1} R_2(\la) \big) \cd \phi_2(b_2) \, d\la \, .
\end{split}
\end{equation}
The fact that $D_2 R_2(\la) \phi_2(b_2)$ and $R_2(\la) \phi_2(b_2) \in \mathbb{K}(E_2)$, for all $\la \in [0,\infty)$, combined with the estimate in Equation \eqref{eq:estimate} now imply that both of the integrals on the right hand side of Equation \eqref{eq:commutator} converge absolutely to elements in $\mathbb{K}(E_2,E)$ (remark that the integrands also depend continuously on $\la \in (0,\infty)$ with respect to the operator norm). We thus conclude that
\[
F_D T_\xi - (-1)^{\pa \xi} T_{\eta \cd b_1} F_{D_2} \cd \phi_2(b_2) \in \mathbb{K}(E_2,E) \, .
\]
Since $[F_{D_2}, \phi_2(b_2)] \in \mathbb{K}(E_2)$ we have proved that $F_D T_\xi - (-1)^{\pa \xi} T_\xi F_{D_2} \in \mathbb{K}(E_2,E)$. 

A similar argument shows that $F_D^* T_\xi - (-1)^{\pa \xi} T_\xi F_{D_2}^* \in \mathbb{K}(E_2,E)$ as well.
\end{proof}




\subsection{Localization}
Throughout this subsection the conditions stated in the beginning of Section \ref{s:kuce} are in effect.

We are now going to apply the localization results obtained in Section \ref{s:locaregu}, \ref{s:locahalf} and \ref{s:unbkas}. Recall from Definition \ref{d:localiz} and Proposition \ref{p:loca} that whenever $x \in \sA$, then the localization $D_x : \Tex{Dom}(D_x) \to E_x$ is the selfadjoint and regular unbounded operator defined as the closure of
\[
\phi(x) D \phi(x^*) : \Tex{Dom}(D) \cap E_x \to E_x \, ,
\]
where $E_x := \Tex{cl}\big( \Tex{Im}(\phi(x))\big) \su E$. The core idea is to replace the bounded transform of $D : \Tex{Dom}(D) \to E$ by the bounded transforms of sufficiently many localizations $D_x : \Tex{Dom}(D_x) \to E_x$, when verifying the positivity condition in Equation \eqref{eq:posicond}. The precise result is given here:

\begin{prop}\label{p:lampos}
Suppose that conditions {\em (a)} and {\em (b)} of Definition \ref{defn:loca-subset} hold for some localizing subset $\La \su \sA$ and that $\nu \in \rr$ is given. Suppose moreover that
\[
T_x^* \big( (F_{D_1}^* \hot 1)|_{E_x} \cd F_{D_x} + F_{D_x} \cd (F_{D_1} \hot 1)|_{E_x} \big) T_x + \nu \cd \phi(x^* x)
\]
is positive in $\mathbb{L}(E)/ \mathbb{K}(E)$ for all $x \in \La$. Then we have that
\[
\phi(a^*) \big( (F_{D_1}^* \hot 1) F_D^* + F_D (F_{D_1} \hot 1) \big) \phi(a) + \nu \cd \phi(a^* a)
\]
is positive in $\mathbb{L}(E)/ \mathbb{K}(E)$ for all $a \in A$.
\end{prop}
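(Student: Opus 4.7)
The plan is to translate the hypothesis into a cleaner form in the Calkin algebra, handle the case $a = xb$ with $x \in \La$ directly by conjugation, treat a finite linear combination $a = \sum_i x_i b_i$ via a matrix positivity argument, and extend to arbitrary $a \in A$ by density and continuity.

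First I would work in $\mathcal{C} := \mathbb{L}(E)/\mathbb{K}(E)$ and set $G := (F_{D_1}^* \hot 1) F_D^* + F_D (F_{D_1} \hot 1) \in \mathcal{C}$. Condition (b) of Definition \ref{defn:loca-subset}, combined with $\La = \La^*$ and applied via functional calculus to $(D_1 \hot 1)^*(D_1 \hot 1)$, yields the \emph{exact} commutation $[F_{D_1} \hot 1, \phi(x)] = 0$ in $\mathbb{L}(E)$ for all $x \in \La$. Combining this with the Kasparov-module property $[F_D, \phi(x)] \in \mathbb{K}(E)$, the self-adjointness $F_{D_x} = F_{D_x}^*$ from Proposition \ref{p:loca}, and the connection identity $F_{D_x} T_x - T_x F_D \in \mathbb{K}(E, E_x)$ (which follows from Proposition \ref{p:connectionI} by approximating $T_x$ with $T_{xe_\alpha}$ along an approximate unit $(e_\alpha)$ of $A$), one verifies that in $\mathcal{C}$
\[
T_x^* \big( (F_{D_1}^* \hot 1)|_{E_x} F_{D_x} + F_{D_x} (F_{D_1} \hot 1)|_{E_x} \big) T_x + \nu \phi(x^*x) = \phi(x^*)(G+\nu)\phi(x).
\]
The hypothesis therefore reads $\phi(x^*)(G+\nu)\phi(x) \geq 0$ in $\mathcal{C}$ for every $x \in \La$.

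The case $a = xb$ with $x \in \La$ and $b \in A$ is immediate: $\phi(a^*) G \phi(a) + \nu \phi(a^*a) = \phi(b)^*\big[\phi(x^*)G\phi(x) + \nu\phi(x^*x)\big]\phi(b) \geq 0$ by conjugation. The essential step is a finite sum $a = \sum_{i=1}^k x_i b_i$ with $x_i \in \La$ and $b_i \in A$: one has $\phi(a^*) G \phi(a) + \nu \phi(a^*a) = \phi(\mathbf{b})^* M \phi(\mathbf{b})$ in $\mathcal{C}$, where $\mathbf{b} = (b_1, \ldots, b_k)^T$ and $M \in M_k(\mathcal{C})$ has entries $M_{ij} := \phi(x_i^*) G \phi(x_j) + \nu \phi(x_i^* x_j)$. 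Applying the same exact commutation and Kasparov compactness entrywise simplifies each entry in $\mathcal{C}$ to $M_{ij} = \phi(x_i^* x_j)(G + \nu)$, so $M$ factors as $[\phi(x_i^* x_j)]_{ij} \cdot ((G + \nu) \otimes I_k)$: the product of the positive Gram matrix $[\phi(x_i^* x_j)]_{ij}$ and the commuting self-adjoint element $(G + \nu) \otimes I_k$ in $M_k(\mathcal{C})$.

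The main obstacle will be to show that this commuting product is positive in $M_k(\mathcal{C})$. To this end, I would pick a faithful representation $\mathcal{C} \hookrightarrow B(\mathcal{K})$ and realize $[\phi(x_i^* x_j)]_{ij} = \mathbf{X}^* \mathbf{X}$ for the row operator $\mathbf{X} : \mathcal{K}^k \to \mathcal{K}$, $(\xi_i) \mapsto \sum_i \phi(x_i) \xi_i$. Since the two factors commute, it is enough to verify that $(G + \nu) \otimes I_k$ is positive as a quadratic form on $\overline{\mathbf{X}^* \mathcal{K}} \su \mathcal{K}^k$. For $\eta = \mathbf{X}^* \xi = (\phi(x_i^*) \xi)_i$ with $\xi \in \mathcal{K}$,
\[
\langle ((G + \nu) \otimes I_k) \eta, \eta\rangle = \Big\langle \Big(\sum_{i=1}^k \phi(x_i)(G + \nu) \phi(x_i^*)\Big) \xi, \xi\Big\rangle \geq 0,
\]
since applying the translated hypothesis to each $x_i^* \in \La$ (available because $\La = \La^*$) gives $\phi(x_i)(G + \nu)\phi(x_i^*) \geq 0$ in $\mathcal{C}$, and a finite sum of positives is positive; continuity extends the form inequality to the closure. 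Hence $M \geq 0$ in $M_k(\mathcal{C})$, and so $\phi(a^*) G \phi(a) + \nu \phi(a^*a) \geq 0$ for every $a \in \La \cdot A$. Finally, density of $\La \cdot A$ in $A$ (condition (a) of Definition \ref{defn:loca-subset}) together with norm-continuity of $a \mapsto \phi(a^*) G \phi(a) + \nu \phi(a^*a)$ extends positivity to all $a \in A$.
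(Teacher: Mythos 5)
Your proof is correct, and it takes a genuinely different route from the paper. The paper fixes an enumeration $\{x_1,x_2,\dots\}$ of the (countable) localizing subset $\La$, chooses constants $C_n$ so that $\Ga := \sum_n \frac{1}{n^2 C_n}\, x_n^* x_n$ converges absolutely in $A$, verifies that $\Ga\cdot A$ is still norm-dense, and then reduces the positivity claim for general $a$ to the positivity of $\Ga^{1/2}\bigl(\sum_n \frac{1}{n^2 C_n} ( T_{x_n}^*(\cdots)T_{x_n} + \nu T_{x_n}^* T_{x_n} )\bigr)\Ga^{1/2}$, an absolutely convergent sum of positives --- the delicate point being that the bound on $C_n$ must also dominate the error terms so that the two approximations ``modulo $\mathbb{K}(E)$'' survive summation over $n$. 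You instead prove positivity directly for any finite linear combination $a = \sum_i x_i b_i$ by rewriting $\phi(a^*)G\phi(a)+\nu\phi(a^*a)$ as $\phi(\mathbf{b})^* M \phi(\mathbf{b})$ and factoring $M$ in $M_k(\mathcal{C})$ as the Gram matrix $[\phi(x_i^*x_j)]$ times the commuting self-adjoint $(G+\nu)\otimes I_k$; a faithful representation then reduces positivity of this commuting product to positivity of the form on $\overline{\mathrm{ran}(\mathbf{X}^*)}$, which you check directly using the hypothesis applied to $x_i^*\in\La$. Both arguments rest on the same three ingredients (exact commutation of $\phi(\La)$ with $F_{D_1}\hot 1$, the Kasparov-module commutator property of $F_D$, and the connection identity from Proposition \ref{p:connectionI}), but your route sidesteps the series construction, the choice of $C_n$, and the density check for $\Ga\cdot A$; in fact your argument never uses countability of $\La$. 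What it costs is the excursion through a faithful representation and the commuting-factorization lemma, whereas the paper's argument is a purely in-algebra computation. One small point worth making explicit in a write-up is the justification that for commuting self-adjoints $P\ge 0$ and $Q$ one has $PQ = P^{1/2} Q P^{1/2}$, so that $PQ\ge 0$ follows from $\langle Q\eta,\eta\rangle\ge 0$ on $\overline{\mathrm{ran}(P^{1/2})} = \overline{\mathrm{ran}(\mathbf{X}^*)}$.
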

\begin{proof}
For $x \in \La$ we have that $[F_{D_1} \hot 1, \phi(x) ] = 0$ and the closed submodule $E_x \su E$ is thus invariant under $F_{D_1} \hot 1$. The restriction $(F_{D_1} \hot 1)|_{E_x} : E_x \to E_x$ is therefore a well-defined bounded adjointable operator. The same observation holds for the adjoint $F_{D_1}^* \hot 1$.

Since $\La$ is countable we may write the elements in $\La$ as a sequence $\{x_1,x_2,x_3,\ldots\}$. For each $n \in \nn$, we choose a constant
\[
C_n > 2 + \| x_n \|^2 + \| F_D T_{x_n}^* - T_{x_n}^* F_{D_x} \|_\infty \cd \|x_n\|
\]
and define the element
\[
\Ga := \sum_{n = 1}^\infty \frac{1}{n^2 C_n} x_n^* x_n \, \in A \, ,
\]
where the series is absolutely convergent. Since $\La \cd A \subseteq A$ is norm-dense and $\La = \La^*$ we have that
\[
\Ga \cd A \subseteq A
\]
is norm-dense as well. It therefore suffices to show that
\[
\Ga \cd \big( (F_{D_1}^* \hot 1) F_D^* + F_D (F_{D_1} \hot 1) \big) \cd \Ga + \nu \cd \phi(\Ga^2)
\]
is positive in the Calkin algebra $\mathbb{L}(E)/\mathbb{K}(E)$. 

We now compute modulo $\mathbb{K}(E)$, using Proposition \ref{p:connectionI}, that $\Ga$ commutes with $F_{D_1} \hot 1$ and that $(F_D,E)$ is a Kasparov module:
\[
\begin{split}
& \Ga \cd \big( (F_{D_1}^* \hot 1) F_D^* + F_D (F_{D_1} \hot 1) \big) \cd \Ga \\
& \q \sim \Ga^{1/2} \big( (F_{D_1}^* \hot 1) F_D + F_D (F_{D_1} \hot 1) \big) \cd \Ga^{3/2} \\
& \q = \Ga^{1/2} \sum_{n = 1}^\infty \frac{1}{n^2 C_n} \big( (F_{D_1}^* \hot 1) F_D + F_D (F_{D_1} \hot 1) \big) T_{x_n}^* T_{x_n} \\
& \q \sim
\Ga^{1/2} \sum_{n = 1}^\infty \frac{1}{n^2 C_n} T_{x_n}^* \big( (F_{D_1}^* \hot 1)|_{E_x} F_{D_x} + F_{D_x} (F_{D_1} \hot 1)|_{E_x} \big) T_{x_n} \Ga^{1/2} \, .
\end{split}
\]
But this proves the present proposition since 
\[
\begin{split}
& \Ga^{1/2} \sum_{n = 1}^\infty \frac{1}{n^2 C_n} T_{x_n}^* \big( (F_{D_1}^* \hot 1)|_{E_x} F_{D_x} + F_{D_x} (F_{D_1} \hot 1)|_{E_x} \big) T_{x_n} \Ga^{1/2} \\
& \qqq + \nu \phi(\Ga^2) \\
& \q = 
\Ga^{1/2} \sum_{n = 1}^\infty \frac{1}{n^2 C_n} \Big( T_{x_n}^* \big( (F_{D_1}^* \hot 1)|_{E_x} F_{D_x} + F_{D_x} (F_{D_1} \hot 1)|_{E_x} \big) T_{x_n} \\ 
& \qqq \qq + \nu T_{x_n}^* T_{x_n} \Big)  \cd \Ga^{1/2}
\end{split}
\]
is positive in $\mathbb{L}(E)/\mathbb{K}(E)$ by assumption.
\end{proof}




\subsection{The positivity condition}
We remain in the setup described in the beginning of Section \ref{s:kuce}.

Before continuing our treatment of the positivity condition in Equation \eqref{eq:posicond} we introduce some further notation:

\begin{defn}\label{defn:locaQ}
For each $x \in \sA$ satisfying condition {\em (c)} in Definition \ref{defn:loca-subset} we put
\[
\Tex{Dom}(Q_x) := \Tex{Dom}(D \phi(x^*)) \cap \Tex{Im}(\phi(x))
\]
and define the map $Q_x : \Tex{Dom}(Q_x) \to C$ by
\[
Q_x(\xi) := 2 \cd \Tex{Re} \inn{D \phi(x^*) \xi, (D_1 \hot 1)\phi(x^*) \xi} \, ,
\]
where $\Tex{Re} : C \to C$ takes the real part of an element in the $C^*$-algebra $C$.
\end{defn}


For each $\la \geq 0$ and $x \in \sA$ satisfying condition (b) of Definition \ref{defn:loca-subset} we define the bounded adjointable operators on $E_x$:
\[
\begin{split}
& R_1(\la)|_{E_x} := \big(1 + \la + (D_1^* \hot 1)(D_1 \hot 1)\big)^{-1}|_{E_x} \\
& S_1(\la)|_{E_x} := (D_1 \hot 1) \big(1 + \la + (D_1^* \hot 1)(D_1 \hot 1)\big)^{-1}|_{E_x} \\
& R_x(\la) := (1 + \la + D_x^2)^{-1} \q S_x(\la) := D_x (1 + \la + D_x^2)^{-1} \, . 
\end{split}
\]

The next lemma follows by standard functional calculus arguments:

\begin{lma}\label{l:analysis}
Suppose that $x \in \sA$ satisfies condition {\em (b)} of Definition \ref{defn:loca-subset}. Then the maps $[0,\infty)^2 \to \mathbb{L}(E_x)$ defined by
\[
\begin{split}
M_1(\la,\mu,x) & := S_x(\la) S_1(\mu)|_{E_x} \\
M_2(\la,\mu,x) & := S_x(\la) R_1(\mu)|_{E_x} \cd \sqrt{1+ \mu} \\
M_3(\la,\mu,x)  & := R_x(\la) S_1(\mu)|_{E_x} \cd \sqrt{1 + \la} \\
M_4(\la,\mu,x) & := R_x(\la) R_1(\mu)|_{E_x} \cd \sqrt{(1 + \la)(1 + \mu)}
\end{split}
\]
are all continuous in operator norm and satisfy the estimate
\[
\| M_j(\la,\mu,x) \|_\infty \leq (1 + \la)^{-1/2} \cd (1 + \mu)^{-1/2} \q j \in \{1,2,3,4\} \, ,
\]
for all $\la,\mu \in [0,\infty)$. In particular, it holds that the integral
\[
\frac{1}{\pi^2} \int_0^\infty \int_0^\infty (\la \mu)^{-1/2} \cd (M_j^* M_j)(\la,\mu,x)  \, d\la d\mu
\]
converges absolutely to a bounded adjointable operator $K_j(x) \in \mathbb{L}(E_x)$ with $0 \leq K_j(x) \leq 1$ for all $j \in \{1,2,3,4\}$.
\end{lma}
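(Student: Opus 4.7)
The plan is to reduce everything to standard functional calculus for two essentially commuting resolvent families on $E_x$: that of the selfadjoint regular operator $D_x$, and that of the restriction of $(D_1^* \hot 1)(D_1 \hot 1)$ to $E_x$. From condition (b) of Definition \ref{defn:loca-subset} one has that $\phi(x)$ commutes with every bounded Borel function of $(D_1^* \hot 1)(D_1 \hot 1)$, so $R_1(\mu)$ preserves $E_x$ and $R_1(\mu)|_{E_x}$ is the resolvent of the positive selfadjoint regular operator $\bigl((D_1^* \hot 1)(D_1 \hot 1)\bigr)|_{E_x}$ on $E_x$, while $S_1(\mu)|_{E_x}$ is the corresponding bounded extension of $(D_1 \hot 1) R_1(\mu)|_{E_x}$. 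Joint operator-norm continuity of each $M_j$ in $(\lambda,\mu)$ then follows from the resolvent identity $R_x(\lambda') - R_x(\lambda) = (\lambda - \lambda') R_x(\lambda') R_x(\lambda)$ and its counterpart for $R_1|_{E_x}$, together with $S_x = D_x R_x$ and $S_1 = (D_1 \hot 1) R_1$.

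The pointwise estimate $\|M_j(\lambda,\mu,x)\|_\infty \leq (1+\lambda)^{-1/2}(1+\mu)^{-1/2}$ is immediate from the elementary functional-calculus bounds $\|(1+\nu+H)^{-1}\|_\infty \leq (1+\nu)^{-1}$ and $\|H^{1/2}(1+\nu+H)^{-1}\|_\infty \leq (1+\nu)^{-1/2}$ for positive selfadjoint regular $H$, applied with $H = D_x^2$ on one factor and $H = \bigl((D_1^* \hot 1)(D_1 \hot 1)\bigr)|_{E_x}$ on the other, and then multiplied. Since $(\lambda\mu)^{-1/2}(1+\lambda)^{-1}(1+\mu)^{-1}$ is integrable on $(0,\infty)^2$, the Bochner integral defining $K_j(x)$ converges absolutely to a bounded adjointable operator on $E_x$, which is non-negative since each $M_j^* M_j$ is.

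The remaining non-routine point is $K_j(x) \leq 1$. Using Fubini to carry out the $\lambda$-integration first, one applies the two elementary identities
\[
\tfrac{1}{\pi}\int_0^\infty \lambda^{-1/2}\, S_x(\lambda)^* S_x(\lambda)\, d\lambda = \tfrac{1}{2}\, D_x^2 (1+D_x^2)^{-3/2},
\]
\[
\tfrac{1}{\pi}\int_0^\infty \lambda^{-1/2}(1+\lambda)\, R_x(\lambda)^2\, d\lambda = \tfrac{1}{2}(2+D_x^2)(1+D_x^2)^{-3/2},
\]
both of whose right-hand sides have operator norm at most $1$. Consequently each $K_j(x)$ is dominated by an integral of the form $\tfrac{1}{\pi}\int_0^\infty \mu^{-1/2} A(\mu)^* A(\mu)\, d\mu$ with $A(\mu) \in \{S_1(\mu)|_{E_x},\sqrt{1+\mu}\, R_1(\mu)|_{E_x}\}$, and the analogous $\mu$-integration identities (with $\bigl((D_1^* \hot 1)(D_1 \hot 1)\bigr)|_{E_x}$ in place of $D_x^2$) show that this final integral is again a positive contraction, whence $K_j(x) \leq 1$. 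I expect the only real obstacle to be bookkeeping: verifying that the functional-calculus manipulations take place genuinely on the restricted space $E_x$, for which the commutation hypothesis (b) of Definition \ref{defn:loca-subset} is indispensable.
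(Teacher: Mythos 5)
Your proof is correct, and since the paper explicitly declines to give an argument (``The next lemma follows by standard functional calculus arguments''), there is no fixed route to match. The continuity and pointwise estimates are handled exactly as expected, and your identification that condition (b) is what makes $R_1(\mu)$ and $S_1(\mu)$ genuinely restrict to $E_x$ is the right observation. Your Fubini computation, including the two displayed integral identities $\tfrac{1}{\pi}\int_0^\infty \lambda^{-1/2} S_x(\lambda)^*S_x(\lambda)\,d\lambda = \tfrac12 D_x^2(1+D_x^2)^{-3/2}$ and its companion, checks out, and so does the domination argument $S_1(\mu)^* P_x S_1(\mu) \le S_1(\mu)^* S_1(\mu)$ that yields $K_j(x)\le 1$.

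One simplification worth noting: the conclusion $K_j(x)\le 1$ is already a direct consequence of the pointwise norm estimate and requires no iterated integration. Since $M_j^*M_j \le \|M_j\|_\infty^2 \cdot 1 \le (1+\lambda)^{-1}(1+\mu)^{-1}\cdot 1$ as positive operators, and since Bochner integration preserves the closed positive cone of $\mathbb{L}(E_x)$, one has
\[
K_j(x) \;\le\; \frac{1}{\pi^2}\int_0^\infty\!\!\int_0^\infty (\lambda\mu)^{-1/2}(1+\lambda)^{-1}(1+\mu)^{-1}\,d\lambda\,d\mu \cdot 1 \;=\; \Big(\tfrac{1}{\pi}\int_0^\infty\lambda^{-1/2}(1+\lambda)^{-1}d\lambda\Big)^2 \cdot 1 \;=\; 1\,,
\]
using $\int_0^\infty \lambda^{-1/2}(1+\lambda)^{-1}\,d\lambda = \pi$. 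So the ``remaining non-routine point'' you flag is in fact routine; the integral identities you invoke are correct but do more work than is needed. Everything else in your write-up is sound.
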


In order to ensure that later computations are well-defined we prove the following:

\begin{lma}\label{l:domQ}
Suppose that $x \in \sA$ satisfies condition {\em (c)} of Definition \ref{defn:loca-subset} and that $\| d(x^*) \phi(x) \|_\infty < 1$. Then
\begin{equation}\label{eq:incloca}
\Tex{Im}\big( R_x(\la) T_x \big) \su \Tex{Dom}(Q_x) \q \Tex{and} \q
\Tex{Im}\big( S_x(\la) T_x \big) \su \Tex{Dom}(Q_x) \, ,
\end{equation}
for all $\la \geq 0$. In particular, if $x \in \sA$ moreover satisfies condition {\em (b)} of Definition \ref{defn:loca-subset}, then
\[
\Tex{Im}\big( M_j(\la,\mu,x) T_x) \su \Tex{Dom}(Q_x) \, ,
\]
for all $j \in \{1,2,3,4\}$ and all $\la,\mu \in [0,\infty)$.
\end{lma}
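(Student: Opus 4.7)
The guiding idea is to use Lemma \ref{l:inv} and Proposition \ref{p:loca} to rewrite $R_x(\la) T_x$ and $S_x(\la) T_x$ as compositions in which the factor $\phi(x)$ can be commuted outward, so that the two defining inclusions of $\Tex{Dom}(Q_x)$ become visible from the algebraic form of the result. Fix $\la \geq 0$ and set $\mu := \sqrt{1+\la}$; the hypothesis $\| d(x^*)\phi(x) \|_\infty < 1$ gives $|\mu| \geq 1 > \| d(x^*)\phi(x) \|_\infty$, so Lemma \ref{l:inv} applies at $r = \pm \mu$ and yields the intertwining relations $(\pm i\mu + \wit{D_x})^{-1}\phi(x) = \phi(x)(\pm i\mu + D\phi(x^*x))^{-1}$, with $\wit{D_x} = D\phi(xx^*) - d(x)\phi(x^*)$ as in Lemma \ref{l:self}. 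Applying Proposition \ref{p:loca} twice to the factorisation $R_x(\la) = (i\mu + D_x)^{-1}(-i\mu + D_x)^{-1}$ gives $R_x(\la)\eta = (1+\la+\wit{D_x}^2)^{-1}\eta$ for every $\eta \in E_x$; combining these identities one arrives at
\[
R_x(\la) T_x \xi \;=\; \phi(x)\,\alpha \, , \qquad \alpha \;:=\; (i\mu + D\phi(x^*x))^{-1}(-i\mu + D\phi(x^*x))^{-1}\xi \, .
\]

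Since $\alpha$ lies in the image of $(i\mu + D\phi(x^*x))^{-1}$, it belongs to $\Tex{Dom}(D\phi(x^*x))$, so $\phi(x^*x)\alpha \in \Tex{Dom}(D)$. Hence $R_x(\la) T_x \xi \in \Tex{Im}(\phi(x))$, and $\phi(x^*) R_x(\la) T_x \xi = \phi(x^*x)\alpha \in \Tex{Dom}(D)$, i.e.\ $R_x(\la) T_x \xi \in \Tex{Dom}(D\phi(x^*))$; both inclusions together yield $R_x(\la) T_x \xi \in \Tex{Dom}(Q_x)$. For $S_x(\la) T_x \xi = D_x R_x(\la) T_x \xi$ one first notes that $\phi(x)\alpha \in \Tex{Dom}(D_x^2) \su \Tex{Dom}(D_x)$ and that $D_x$ agrees with $\wit{D_x}$ on $\Tex{Dom}(D_x)$ (by Proposition \ref{p:loca} together with closedness of both operators). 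A short computation using $\phi(x)(\Tex{Dom}(D)) \su \Tex{Dom}(D)$ and the commutator identity $D\phi(x) - \phi(x)D = d(x)$ on $\Tex{Dom}(D)$ gives the clean formula $\wit{D_x}\phi(x)\alpha = \phi(x)\,D\phi(x^*x)\alpha$. Writing $\beta := (-i\mu + D\phi(x^*x))^{-1}\xi$, so that $\alpha = (i\mu + D\phi(x^*x))^{-1}\beta$ and $D\phi(x^*x)\alpha = \beta - i\mu\alpha$, we conclude that $S_x(\la) T_x \xi = \phi(x)(\beta - i\mu\alpha)$, and hence $\phi(x^*) S_x(\la) T_x \xi = \phi(x^*x)\beta - i\mu\,\phi(x^*x)\alpha \in \Tex{Dom}(D)$, since both $\alpha$ and $\beta$ lie in $\Tex{Dom}(D\phi(x^*x))$. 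This establishes the second inclusion.

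For the final statement on the $M_j$'s, condition {\em (b)} of Definition \ref{defn:loca-subset} together with $\La = \La^*$ (so that $x^* \in \La$) yields $[D_1 \hot 1,\phi(x)] = 0$ and, by taking adjoints, $[D_1^* \hot 1,\phi(x)] = 0$ on the respective domains. Hence $\phi(x)$ commutes with both $R_1(\mu)$ and $S_1(\mu)$, so that each $M_j(\la,\mu,x) T_x \xi$ is, up to a positive scalar, of the form $R_x(\la) T_x$ or $S_x(\la) T_x$ applied to $R_1(\mu)\xi$ or $S_1(\mu)\xi$; the desired inclusion therefore reduces to the two cases already treated. The main obstacle throughout is domain bookkeeping: one must consistently distinguish between $D_x$ on $E_x$, $\wit{D_x}$ on $E$, and $D\phi(x^*x)$ on $E$, and verify at each step that the composed operators act within their natural domains before invoking Proposition \ref{p:loca} and Lemma \ref{l:inv}.
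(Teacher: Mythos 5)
Your proof is correct, and at its core it uses the same ingredients as the paper: the intertwining relation $(i r + D_x)^{-1}T_x = T_x(i r + D\phi(x^*x))^{-1}$ coming from Lemma \ref{l:inv} and Proposition \ref{p:loca}, applied twice via the factorization $R_x(\la) = (i\mu + D_x)^{-1}(-i\mu+D_x)^{-1}$ with $\mu = \sqrt{1+\la}$. The only place you diverge is in the treatment of $S_x(\la)T_x$: you identify $D_x$ with $\wit{D_x}$ on $\Tex{Dom}(D_x)$ and then compute $\wit{D_x}\phi(x)\alpha = \phi(x)D\phi(x^*x)\alpha = \phi(x)(\beta - i\mu\alpha)$ directly, whereas the paper sidesteps that computation with the resolvent identity $D_x R_x(\la) = (i\mu + D_x)^{-1} + i\mu\,R_x(\la)$, reducing $S_x(\la)T_x$ to a linear combination of terms whose image is already known to lie in $\Tex{Dom}(Q_x)$. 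The paper's route is a bit slicker because it avoids any need to match $D_x$ against $\wit{D_x}$ or to verify that $\phi(x)\alpha$ lies in $\Tex{Dom}(D\phi(xx^*))$; your route is more explicit about the domain bookkeeping that the paper leaves implicit. Both are sound, and your argument for the $M_j$'s (condition (b), together with $\La = \La^*$ so that $x^*$ also satisfies it, forces $\phi(x)$ to commute with $R_1(\mu)$ and $S_1(\mu)$, so one just pre-composes with those operators) is exactly how the paper intends the ``in particular'' to be read.
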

\begin{proof}
Recall from Lemma \ref{l:inv} and Proposition \ref{p:loca} that
\[
( i r + D_x)^{-1} T_x = T_x (i r + D \phi(x^* x))^{-1} \, ,
\]
for all $r \in \rr$ with $|r| \geq 1 > \| d(x^*) \phi(x) \|_\infty$. We thus see that
\[
\Tex{Im}\big( (i r + D_x)^{-1} T_x \big) \su \Tex{Im}(\phi(x)) \cap \Tex{Dom}(D \phi(x^*)) = \Tex{Dom}(Q_x) \, .
\]

The inclusions in Equation \eqref{eq:incloca} now follow since
\[
R_x(\la) T_x = (-i \sqrt{1 + \la} + D_x)^{-1} ( i \sqrt{1 + \la} + D_x)^{-1} T_x
\]
and since
\[
S_x(\la) T_x = D_x R_x(\la) T_x = ( i \sqrt{1 + \la} + D_x)^{-1} T_x + i \sqrt{1 + \la} \cd R_x(\la) T_x \, ,
\]
for all $\la \geq 0$.
\end{proof}

We now start a more detailed computation of the application $Q_x : \Tex{Dom}(Q_x) \to C$ from Definition \ref{defn:locaQ}.

\begin{lma}\label{l:prelalg}
Suppose that $x \in \sA$ satisfies condition {\em (b)} and {\em (c)} of Definition \ref{defn:loca-subset} and that $\| d(x^*) \phi(x) \|_\infty < 1$. Then
\[
\begin{split}
Q_x\big( S_x(\la) T_x(\xi)\big) & = 2 \cd \Tex{Re}\binn{ (D_1 \hot 1) \phi(x) \xi, S_x(\la) T_x \xi} \\
& \qq - (1 + \la) Q_x\big( R_x(\la) T_x(\xi)\big) \, ,
\end{split}
\]
for all $\la \in [0,\infty)$ and $\xi \in \Tex{Dom}\big( (D_1 \hot 1)\phi(x) \big)$.
\end{lma}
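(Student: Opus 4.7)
Fix $\xi \in \Tex{Dom}\big( (D_1 \hot 1)\phi(x) \big)$ and $\la \geq 0$. Throughout I write $\tilde D := D_1 \hot 1$ and set
\[
\eta_1 := R_x(\la) T_x(\xi), \quad \eta_2 := S_x(\la) T_x(\xi) = D_x \eta_1, \quad u_i := \phi(x^*)\eta_i \, \, (i=1,2).
\]
By Lemma \ref{l:domQ} both $\eta_1, \eta_2 \in \Tex{Dom}(Q_x)$, so $u_i \in \Tex{Dom}(D)$. Since $\eta_i \in \Tex{Im}(\phi(x))$ we have $u_i \in \Tex{Im}(\phi(x^* x))$, and condition (c) of Definition \ref{defn:loca-subset} then forces $u_i \in \Tex{Dom}(\tilde D)$. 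Consequently $Q_x(\eta_i) = 2\Tex{Re}\inn{Du_i, \tilde D u_i}$.

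The first computational step extracts clean algebraic descriptions of $\eta_2$ and $u_2$. By Proposition \ref{p:loca}, $D_x$ equals the restriction to $E_x$ of $\tilde D_x = D\phi(\De) - d(x)\phi(x^*)$, where $\De = x x^*$. Since $\eta_1 \in \Tex{Dom}(\tilde D_x)$, expanding $\tilde D_x \eta_1 = D\phi(x) u_1 - d(x) u_1$ and applying the half-closed chain commutator $[D,\phi(x)] = d(x)$ on $\Tex{Dom}(D) \ni u_1$ yields
\[
\eta_2 = \phi(x)\, Du_1, \qquad u_2 = \phi(x^* x)\, Du_1.
\]
The same argument applied to $\eta_2 \in \Tex{Dom}(\tilde D_x)$ (using $u_2 \in \Tex{Dom}(D)$) gives $D_x \eta_2 = \phi(x) Du_2$, and combining this with the resolvent equation $(1 + \la + D_x^2)\eta_1 = T_x\xi$ produces
\[
\phi(x^* x)\, Du_2 = \phi(x^* x)\xi - (1+\la) u_1.
\]

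For the second step I rewrite the right-hand side of the claimed identity. Using condition (b) of Definition \ref{defn:loca-subset} the commutator $[\tilde D, \phi(x^*)]$ vanishes on $\Tex{Dom}(\tilde D) \ni \phi(x)\xi$, so $\phi(x^*) \tilde D \phi(x) \xi = \tilde D \phi(x^* x) \xi$. Together with $\eta_2 = \phi(x) Du_1$ this yields
\[
\inn{\tilde D \phi(x)\xi, \eta_2} = \inn{\tilde D \phi(x^* x)\xi, Du_1}.
\]
Substituting $\phi(x^* x)\xi = \phi(x^* x) Du_2 + (1+\la) u_1$ (both summands lie in $\Tex{Dom}(\tilde D)$) and recognising that $2\Tex{Re}\inn{\tilde D u_1, Du_1} = Q_x(\eta_1)$ gives
\[
2\Tex{Re}\inn{\tilde D \phi(x)\xi, \eta_2} = 2\Tex{Re}\inn{\tilde D \phi(x^* x)\, Du_2, Du_1} + (1+\la) Q_x(\eta_1).
\]
So the target identity is equivalent to
\[
Q_x(\eta_2) = 2\Tex{Re}\inn{\tilde D \phi(x^* x) Du_2, Du_1}.
\]

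The final step establishes this last equality. Since $u_2 = \phi(x^* x) Du_1$, the right-hand side equals $2\Tex{Re}\inn{\tilde D u_2, Du_1}$ after moving $\phi(x^* x)$ inside $\tilde D$ (using $[\tilde D, \phi(x^* x)] = 0$ on $\Tex{Dom}(\tilde D)$, which applies because $Du_1$ appears only after being hit by the bounded operator $\phi(x^* x)$ landing in $\Tex{Dom}(\tilde D)$). On the other hand $Q_x(\eta_2) = 2\Tex{Re}\inn{Du_2, \tilde D u_2}$, and the identity of real parts $\Tex{Re}\inn{a,b} = \Tex{Re}\inn{b,a}$ together with the symmetry of $\tilde D$ between elements of $\Tex{Dom}(\tilde D)$ (applied to $u_1, u_2$, both of which lie in $\Tex{Dom}(\tilde D)$) closes the argument.

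The principal technical point is the domain bookkeeping in the last step: one must justify moving $\tilde D$ past $\phi(x^* x)$ in the expression $\tilde D \phi(x^* x) Du_2$, even though $Du_2$ is not a priori in $\Tex{Dom}(\tilde D)$. This is done by exploiting that $\phi(x^* x) Du_2$ \emph{does} lie in $\Tex{Dom}(\tilde D)$ by the explicit formula derived above, and by using the symmetry of $\tilde D$ only in the form $\inn{\tilde D \alpha, \beta} = \inn{\alpha, \tilde D \beta}$ with both $\alpha, \beta \in \Tex{Dom}(\tilde D)$, avoiding any reference to $\tilde D^*$.
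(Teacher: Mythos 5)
Your algebraic reduction tracks the paper's own computation quite closely: introducing $u_1 = \phi(x^*)R_x(\la)T_x\xi$, $u_2 = \phi(x^*)S_x(\la)T_x\xi = \phi(x^*x)Du_1$, establishing $\phi(x^*x)Du_2 = \phi(x^*x)\xi - (1+\la)u_1$, and reducing the claim to
\[
\Tex{Re}\binn{Du_2, (D_1 \hot 1) u_2} = \Tex{Re}\binn{(D_1 \hot 1)\phi(x^*x)Du_2, Du_1}
\]
is exactly what the paper does (in the notation there, $\eta = T_x^*R_x(\la)T_x\xi$ plays the role of your $u_1$).

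There is, however, a genuine gap in your final step. You claim to close the argument using only the symmetry of $\tilde D := D_1\hot 1$ in the form $\binn{\tilde D\alpha,\beta} = \binn{\alpha,\tilde D\beta}$ for $\alpha,\beta \in \Tex{Dom}(\tilde D)$, ``avoiding any reference to $\tilde D^*$,'' and you propose to apply this with $\alpha = u_1$, $\beta = u_2$. But the identity you actually need is $\binn{\tilde D\phi(x^*x)Du_2, Du_1} = \binn{Du_2, \tilde D\phi(x^*x)Du_1}$, and the vectors $Du_1, Du_2$ are \emph{not} a priori in $\Tex{Dom}(\tilde D)$; condition (c) of Definition~\ref{defn:loca-subset} only places $\phi(x^*x)Du_1$ and $\phi(x^*x)Du_2$ in $\Tex{Dom}(\tilde D)$. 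So the elementary symmetry of $\tilde D$ on its own domain does not apply, and neither does ``moving $\phi(x^*x)$ inside $\tilde D$'' (which would require $Du_i \in \Tex{Dom}(\tilde D)$). What is actually needed is the symmetry of the \emph{unbounded operator} $\tilde D\phi(x^*x)$ on its full domain $\Tex{Dom}(\tilde D\phi(x^*x)) \supseteq \{Du_1, Du_2\}$, and this is precisely what the paper invokes: condition (b) and Proposition~\ref{p:reg} imply that $(D_1\hot 1)\phi(x^*x)$ is selfadjoint and regular with core $\Tex{Dom}(D_1\hot 1)$. This regularity/core statement does rely on the adjoint $\tilde D^*$ (via Proposition~\ref{p:reg}), so the ambition to bypass $\tilde D^*$ entirely does not go through; you must cite the selfadjointness of $(D_1\hot 1)\phi(x^*x)$, as the paper does, to justify moving the operator across the inner product.
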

\begin{proof}
Let $\la \in [0,\infty)$ and let $\xi \in \Tex{Dom}\big( (D_1 \hot 1)\phi(x) \big)$ be given. We first claim that
\[
D T_x^* S_x(\la) T_x \xi \in \Tex{Dom}( (D_1 \hot 1) \phi(x^* x) )
\]
and that
\[
\begin{split}
& (D_1 \hot 1 ) \phi(x^* x) D T_x^* S_x(\la) T_x \xi \\
& \q = (D_1 \hot 1) \phi(x^* x) \xi - (1 + \la) (D_1 \hot 1) T_x^* R_x(\la) T_x \xi \, .
\end{split}
\]
But this follows since
\[
\begin{split}
& \phi(x^* x) D T_x^* S_x(\la) T_x\xi 
= T_x^* D_x S_x(\la) T_x \xi \\
& \q = \phi(x^* x) \xi - (1 + \la) T_x^* R_x(\la) T_x \xi
\in \Tex{Dom}(D_1 \hot 1) \, ,
\end{split}
\]
where we remark that $\phi(x^* x) \xi \in \Tex{Dom}(D_1 \hot 1)$ since $x^* \in \sA$ and that $T_x^* R_x(\la) T_x \xi \in \Tex{Dom}(D) \cap \Tex{Dom}(D_1 \hot 1)$ by condition (c) and Lemma \ref{l:domQ}. 

Notice now that condition (b) and Proposition \ref{p:reg} implies that 
\[
(D_1 \hot 1) \phi(x^* x) : \Tex{Dom}\big( (D_1 \hot 1) \phi(x^* x) \big) \to E
\]
is selfadjoint and regular. Putting $\eta := T_x^* R_x(\la) T_x(\xi) \in \Tex{Dom}(D) \cap \Tex{Dom}(D_1 \hot 1)$ and using the above claim, the lemma is then proved by the following computation:
\[
\begin{split}
& \frac{1}{2} \cd Q_x\big( S_x(\la) T_x(\xi)\big)
= \Tex{Re}\binn{ D T_x^* S_x(\la) T_x(\xi), (D_1 \hot 1) T_x^* S_x(\la) T_x(\xi)} \\
& \q = \Tex{Re}\binn{ (D_1 \hot 1) \phi(x^* x) D T_x^* S_x(\la) T_x \xi, D \eta} \\
& \q = \Tex{Re}\binn{ (D_1 \hot 1) \phi(x^* x) \xi, D \eta} - (1 + \la) \Tex{Re}\binn{ (D_1 \hot 1) \eta, D \eta} \\
& \q = \Tex{Re}\binn{ (D_1 \hot 1) \phi(x) \xi, S_x(\la) T_x \xi} - (1 + \la) \Tex{Re}\binn{ (D_1 \hot 1) \eta, D \eta} \, . 
\end{split}
\]
\end{proof}

\begin{defn}
For each $x \in \sA$ satisfying condition {\em (b)} and {\em (c)} of Definition \ref{defn:loca-subset} and that $\| d(x^*) \phi(x) \|_\infty < 1$, we define the assignment
\[
Q_j(\la,\mu,x) : \Tex{Im}(T_x) \to C \q Q_j(\la,\mu,x)( T_x \xi) := Q_x\big( M_j(\la,\mu,x) T_x \xi \big) \, ,
\]
for all $\la,\mu \in [0,\infty)$, $j \in \{1,2,3,4\}$.
\end{defn}

The main algebraic result of this section can now be stated and proved:

\begin{lma}\label{l:algebra}
Suppose that $x \in \sA$ satisfies condition {\em (b)} and {\em (c)} of Definition \ref{defn:loca-subset} and that $\| d(x^*) \phi(x) \|_\infty < 1$. Then we have the identity
\[
\sum_{j = 1}^4 Q_j(\la,\mu,x)(T_x \xi) = 2 \cd \Tex{Re}\binn{T_x \xi, S_x(\la) S_1(\mu)|_{E_x} T_x \xi} \, ,
\]
for all $\la,\mu \in [0,\infty)$ and all $\xi \in E$.
\end{lma}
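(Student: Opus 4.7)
The plan is to apply Lemma~\ref{l:prelalg} twice, with substitutions $\xi' = R_1(\mu)\xi$ and $\xi' = S_1(\mu)\xi$, and then to exhibit a cancellation. A preliminary observation, derived from condition~(b) of Definition~\ref{defn:loca-subset} and $\La = \La^*$ via a standard adjoint argument, is that $D_1\hot 1$ and $D_1^*\hot 1$ both commute strongly with $\phi(x)$ and $\phi(x^*)$; in particular $R_1(\mu)$ and $S_1(\mu)$ commute with $T_x$ and $T_x^*$, so $R_1(\mu)|_{E_x}$ and $S_1(\mu)|_{E_x}$ can be moved freely through the products below.

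The pair $Q_2+Q_4$ is treated first: $\xi' := R_1(\mu)\xi$ satisfies $\phi(x)\xi' = R_1(\mu)\phi(x)\xi \in \Tex{Im}(R_1(\mu)) \su \Tex{Dom}(D_1\hot 1)$, so Lemma~\ref{l:prelalg} applies directly. Using $(D_1\hot 1)R_1(\mu) = S_1(\mu)$ and multiplying by $(1+\mu)$ one obtains
\[
Q_2+Q_4 \;=\; 2(1+\mu)\,\Tex{Re}\binn{S_1(\mu)|_{E_x}T_x\xi,\, S_x(\la)R_1(\mu)|_{E_x}T_x\xi}.
\]
For $Q_1+Q_3$, the natural substitution $\xi' := S_1(\mu)\xi$ only places $\phi(x)\xi'$ in $\Tex{Dom}(D_1^*\hot 1)$, generally not in $\Tex{Dom}(D_1\hot 1)$, so Lemma~\ref{l:prelalg} is not directly available. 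We re-run its proof, replacing $(D_1\hot 1)$ by $(D_1^*\hot 1)$ at the decisive step. Setting $\beta := T_x^* S_x(\la)T_x S_1(\mu)\xi$ and $\eta := T_x^* R_x(\la)T_x S_1(\mu)\xi$, condition~(c), Lemma~\ref{l:domQ} and the image-in-$\Tex{Im}(\phi(x^*x))$ argument place both in $\Tex{Dom}(D)\cap\Tex{Dom}(D_1\hot 1)$, and one still has $\beta = \phi(x^*x)D\eta$ together with $\phi(x^*x)D\beta = \phi(x^*x)S_1(\mu)\xi - (1+\la)\eta$. The selfadjoint regular operator $T := (D_1\hot 1)\phi(x^*x) = (D_1^*\hot 1)\phi(x^*x)$ furnished by Proposition~\ref{p:reg} lets us exchange arguments in the inner product, and the bounded identity $(D_1^*\hot 1)S_1(\mu) = 1 - (1+\mu)R_1(\mu)$ replaces the ill-defined $(D_1\hot 1)S_1(\mu)\xi$ by $\phi(x^*x)\bigl(\xi - (1+\mu)R_1(\mu)\xi\bigr)$. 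Unwinding using $\phi(x)D\eta = S_x(\la)T_x S_1(\mu)\xi$ (consistency of $D_x$ with the localization) then yields
\[
Q_1+Q_3 \;=\; 2\,\Tex{Re}\binn{T_x\xi,\, S_x(\la)S_1(\mu)|_{E_x}T_x\xi} - 2(1+\mu)\,\Tex{Re}\binn{R_1(\mu)|_{E_x}T_x\xi,\, S_x(\la)S_1(\mu)|_{E_x}T_x\xi}.
\]

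Summing the two blocks, the identity of the lemma follows as soon as the two $(1+\mu)$-corrections cancel. This is automatic: the selfadjointness $S_x(\la)^* = S_x(\la)$ together with $\binn{a,b}^* = \binn{b,a}$ in the $C^*$-module inner product give
\[
\Tex{Re}\binn{R_1(\mu)|_{E_x}T_x\xi,\, S_x(\la)S_1(\mu)|_{E_x}T_x\xi} \;=\; \Tex{Re}\binn{S_x(\la)R_1(\mu)|_{E_x}T_x\xi,\, S_1(\mu)|_{E_x}T_x\xi} \;=\; \Tex{Re}\binn{S_1(\mu)|_{E_x}T_x\xi,\, S_x(\la)R_1(\mu)|_{E_x}T_x\xi}.
\]
The main obstacle is the domain mismatch in the $Q_1+Q_3$ step, where $\xi' = S_1(\mu)\xi$ escapes the hypothesis of Lemma~\ref{l:prelalg}; the resolution is to exploit the selfadjointness of $(D_1\hot 1)\phi(x^*x)$ from Proposition~\ref{p:reg}, which lets us trade the problematic $(D_1\hot 1)S_1(\mu)\xi$ for the bounded expression $(D_1^*\hot 1)S_1(\mu)\xi = \xi - (1+\mu)R_1(\mu)\xi$.
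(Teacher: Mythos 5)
The proposal follows essentially the same route as the paper's proof: apply Lemma~\ref{l:prelalg} to $\xi' = R_1(\mu)\xi$ and to $\xi' = S_1(\mu)\xi$, use $(D_1^*\hot 1)S_1(\mu) = 1 - (1+\mu)R_1(\mu)$, and cancel the two $(1+\mu)$-terms via the symmetry of $\Tex{Re}\binn{\cdot,\cdot}$ and the selfadjointness of $S_x(\la)$.

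The one place where you genuinely depart from the paper is a worthwhile one. The paper opens its proof with the remark that $S_1(\mu)\xi \in \Tex{Dom}\big((D_1\hot 1)\phi(x)\big)$, i.e.\ that $\phi(x)S_1(\mu)\xi = S_1(\mu)\phi(x)\xi$ lies in $\Tex{Dom}(D_1\hot 1)$. As you correctly flag, $S_1(\mu)$ maps $E$ into $\Tex{Dom}(D_1^*\hot 1)$ only, so this inclusion is not automatic when $D_1$ is not essentially selfadjoint, and the stated hypothesis of Lemma~\ref{l:prelalg} is not directly met for $\xi' = S_1(\mu)\xi$. Your resolution — re-running the argument of Lemma~\ref{l:prelalg} while using $(D_1^*\hot 1)$ at the decisive step and invoking the domain equality $(D_1\hot 1)\phi(x^*x) = (D_1^*\hot 1)\phi(x^*x)$ from Proposition~\ref{p:reg} to ensure $\phi(x^*x)S_1(\mu)\xi \in \Tex{Dom}(D_1\hot 1)$ — is exactly the repair needed, and it is in fact what the proof of Lemma~\ref{l:prelalg} genuinely relies on (the hypothesis there is used only to get $\phi(x^*x)\xi \in \Tex{Dom}(D_1\hot 1)$). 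So the two proofs end up at the same cancellation; yours makes the domain bookkeeping explicit where the paper's is terse.

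One small caveat: the commutation relations you use for $\phi(x^*)$ (needed both for Proposition~\ref{p:reg} to apply and for moving $R_1(\mu), S_1(\mu)$ past $T_x^*$) require that $x^*$, and not just $x$, satisfies condition~(b) of Definition~\ref{defn:loca-subset}. The lemma's hypothesis speaks only of $x$, but in the intended application $x\in\La$ with $\La=\La^*$, so this is available; you implicitly use it, as does the paper.
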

\begin{proof}
Let $\la,\mu \in [0,\infty)$ and $\xi \in E$ be given. Remark that $S_1(\mu)\xi \, , \, \, R_1(\mu)\xi \in \Tex{Dom}( (D_1 \hot 1) \phi(x) )$. We may thus use Lemma \ref{l:prelalg} to compute as follows:
\[
\begin{split}
& \sum_{j = 1}^4 Q_j(\la,\mu,x)(T_x \xi) \\
& \q = Q_x( S_x(\la) T_x S_1(\mu) \xi) + Q_x( S_x(\la) T_x R_1(\mu) \xi) (1 + \mu) \\
& \qq + Q_x( R_x(\la) T_x S_1(\mu) \xi) (1 + \la) \\
& \qqq + Q_x( R_x(\la) T_x R_1(\mu) \xi) (1 + \la) (1 + \mu) \\
& \q = 2 \cd \Tex{Re}\binn{ (D_1 \hot 1) \phi(x) S_1(\mu) \xi, S_x(\la) T_x S_1(\mu) \xi } \\
& \qq + 2 \cd \Tex{Re}\binn{ (D_1 \hot 1) \phi(x) R_1(\mu) \xi, S_x(\la) T_x R_1(\mu) \xi } \cd (1 + \mu) \\
& \q = 2 \cd \Tex{Re}\binn{ T_x (D_1^* \hot 1) S_1(\mu) \xi, S_x(\la) T_x S_1(\mu) \xi } \\
& \qq + 2 \cd \Tex{Re}\binn{ T_x S_1(\mu) \xi, S_x(\la) T_x R_1(\mu) \xi } \cd (1 + \mu) \\
& \q = 2 \cd \Tex{Re}\binn{T_x \xi, S_x(\la) S_1(\mu)|_{E_x} T_x \xi} 
\end{split}
\]
This proves the present lemma.
\end{proof}

We are now ready to treat the positivity condition in Equation \eqref{eq:posicond}:

\begin{prop}\label{p:positivity}
Suppose that $\La \subseteq \sA$ is a localizing subset satisfying the local positivity condition, that $\| d(x^*) \phi(x) \|_\infty < 1$ for all $x \in \La$ and that there exists a $\ka > 0$ such that $\ka_x \leq \ka$ for all $x \in \La$. Then the inequality
\[
\phi(a)^* \big( (F_{D_1}^* \hot 1) F_D^* + F_D (F_{D_1} \hot 1)  \big) \phi(a) \geq -  4 \kappa \cd \phi(a^* a)
\]
holds in the quotient $C^*$-algebra $\mathbb{L}(E)/\mathbb{K}(E)$ for all $a \in A$.
\end{prop}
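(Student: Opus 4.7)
The plan is to verify the hypothesis of Proposition~\ref{p:lampos} element by element for $x \in \La$, reducing everything to the local positivity condition via the algebraic identity in Lemma~\ref{l:algebra} and the norm bounds in Lemma~\ref{l:analysis}. By Lemma~\ref{l:rescaling} we may assume the standard normalizations, so it suffices to show that for each $x \in \La$,
\[
T_x^* \bigl( (F_{D_1}^* \hot 1)|_{E_x} F_{D_x} + F_{D_x} (F_{D_1} \hot 1)|_{E_x} \bigr) T_x + 4\kappa \cd \phi(x^* x) \geq 0
\]
in $\mathbb{L}(E)$ (which certainly implies positivity in the Calkin algebra). The conclusion for general $a \in A$ will then follow immediately from Proposition~\ref{p:lampos} with $\nu = 4\kappa$.

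The next step is to insert the integral formulas
\[
F_{D_x} = \tfrac{1}{\pi} \int_0^\infty \la^{-1/2} S_x(\la)\, d\la, \qquad
(F_{D_1} \hot 1)|_{E_x} = \tfrac{1}{\pi} \int_0^\infty \mu^{-1/2} S_1(\mu)|_{E_x}\, d\mu,
\]
together with the corresponding adjoint integral for $(F_{D_1}^* \hot 1)|_{E_x}$. For any $\xi \in E$ the inner product $\inn{T_x \xi, T_x^*(\,\cdot\,) T_x T_x\xi}$ of the two summands expands as a double integral whose integrand is $\tfrac{2}{\pi^2}(\la\mu)^{-1/2}\,\Tex{Re}\binn{T_x\xi, S_x(\la) S_1(\mu)|_{E_x} T_x\xi}$. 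By Lemma~\ref{l:algebra} this real part equals $\tfrac{1}{2}\sum_{j=1}^4 Q_j(\la,\mu,x)(T_x\xi)$, and by Lemma~\ref{l:domQ} each term $Q_j(\la,\mu,x)(T_x \xi) = Q_x\bigl( M_j(\la,\mu,x) T_x \xi \bigr)$ is defined since $M_j(\la,\mu,x) T_x \xi \in \Tex{Dom}(Q_x)$.

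Now the local positivity condition (Definition~\ref{defn:loca-pos-cond}) gives
\[
Q_j(\la,\mu,x)(T_x \xi) \geq -\kappa_x \binn{M_j(\la,\mu,x) T_x \xi, M_j(\la,\mu,x) T_x \xi},
\]
since each $M_j(\la,\mu,x) T_x \xi$ lies in $\Tex{Im}(\phi(x)) \cap \Tex{Dom}(D\phi(x^*))$. Integrating the inequality against the positive measures $(\la\mu)^{-1/2}\,d\la\,d\mu$ and applying Lemma~\ref{l:analysis} yields
\[
\frac{1}{\pi^2}\int_0^\infty\!\!\int_0^\infty (\la\mu)^{-1/2} \sum_{j=1}^4 Q_j(\la,\mu,x)(T_x \xi)\, d\la\, d\mu
\geq -\kappa_x \sum_{j=1}^4 \binn{T_x\xi, K_j(x) T_x\xi}.
\]
Since $0 \leq K_j(x) \leq 1$, the right hand side is bounded below by $-4\kappa_x \binn{T_x\xi, T_x\xi} = -4\kappa_x \binn{\xi, \phi(x^* x)\xi}$. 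Combining this with the hypothesis $\kappa_x \leq \kappa$, we obtain exactly the desired pointwise inequality in $C$, and hence the operator inequality in $\mathbb{L}(E)$.

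The main technical worries—justifying the domain manipulations, exchanging the Kasparov inner product with the absolutely convergent double integral, and keeping track of the adjoint structure on $F_{D_1}$ (which is not selfadjoint because $D_1$ is only symmetric)—have essentially all been absorbed into the preparatory Lemmas~\ref{l:analysis}, \ref{l:domQ}, \ref{l:prelalg}, and \ref{l:algebra}. What remains is essentially a bookkeeping assembly of these ingredients together with Proposition~\ref{p:lampos}; the only delicate point is making sure that the pointwise lower bound $Q_x \geq -\kappa_x \inn{\,\cdot\,,\,\cdot\,}$ can indeed be integrated against the iterated $\la,\mu$ integrals, which is guaranteed by the uniform norm bound on $M_j(\la,\mu,x)$ in Lemma~\ref{l:analysis}.
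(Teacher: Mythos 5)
Your proof follows the paper's own argument in all its essential steps: reduce to a fixed $x \in \La$ via Proposition~\ref{p:lampos}, expand $2\,\Tex{Re}\binn{F_{D_x}(F_{D_1}\hot 1)|_{E_x}T_x\xi,T_x\xi}$ as a double integral against $(\la\mu)^{-1/2}$, rewrite the integrand as $\sum_j Q_j(\la,\mu,x)(T_x\xi)$ via Lemma~\ref{l:algebra} and Lemma~\ref{l:domQ}, lower-bound each $Q_j$ via the local positivity condition, and conclude with the operators $K_j(x)$ from Lemma~\ref{l:analysis}. That is exactly the paper's route. (The appeal to Lemma~\ref{l:rescaling} at the start is unnecessary and slightly out of place, since the normalizations $\ka_x \leq \ka$ and $\|d(x^*)\phi(x)\|_\infty < 1$ are already hypotheses of this proposition; the rescaling is used earlier, in the proof of Theorem~\ref{t:kuce}.)

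There is, however, one place where your argument is not fully justified as written. You claim that \emph{for any} $\xi \in E$ the quantity $2\,\Tex{Re}\binn{T_x\xi,F_{D_x}(F_{D_1}\hot 1)|_{E_x}T_x\xi}$ equals the double integral of $(\la\mu)^{-1/2}\,\Tex{Re}\binn{T_x\xi,S_x(\la)S_1(\mu)|_{E_x}T_x\xi}$, and you assert that ``integrating the inequality'' is guaranteed by the uniform bound $\|M_j(\la,\mu,x)\|_\infty \leq (1+\la)^{-1/2}(1+\mu)^{-1/2}$ from Lemma~\ref{l:analysis}. That bound only controls the \emph{right-hand side} $\sum_j\|M_jT_x\xi\|^2$ of the pointwise inequality; it does not give absolute integrability of the left-hand side. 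Indeed, for a general $\xi \in E$ the vector $T_x\xi$ need not lie in $\Tex{Dom}(D_x)$ or $\Tex{Dom}(D_1 \hot 1)$, and the only available bounds $\|S_x(\la)T_x\xi\|\lesssim(1+\la)^{-1/2}$, $\|S_1(\mu)T_x\xi\|\lesssim(1+\mu)^{-1/2}$ make the iterated integral of $(\la\mu)^{-1/2}\bigl|\binn{S_1(\mu)T_x\xi,S_x(\la)T_x\xi}\bigr|$ diverge. The paper avoids this by first taking $\xi$ in the dense subspace $\Tex{Dom}(D)\cap\Tex{Dom}(D_1\hot 1)$, so that $T_x\xi = \phi(x)\xi$ lies in $\Tex{Dom}(D_x)\cap\Tex{Dom}(D_1\hot 1)$, which gives the extra decay (e.g., $\|S_x(\la)T_x\xi\|\lesssim(1+\la)^{-3/4}$) making the double integral absolutely convergent; the desired inequality is a closed condition in $\xi$, so density finishes the job. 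You should either add this density reduction or give a careful argument with iterated improper integrals; as written, the justification for exchanging the pointwise inequality with the double integral is not correct.
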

\begin{proof}
By Proposition \ref{p:lampos}, it suffices to show that
\[
T_x^* \big( (F_{D_1}^* \hot 1)|_{E_x} F_{D_x} + F_{D_x} (F_{D_1} \hot 1)|_{E_x} \big) T_x + 4 \kappa \phi(x^* x) 
\]
is positive in $\mathbb{L}(E)/\mathbb{K}(E)$ for all $x \in \La$. Let thus $x \in \La$ be fixed. We will prove the inequality
\[
2 \cd \Tex{Re}\binn{ F_{D_x} (F_{D_1} \hot 1)|_{E_x}  T_x \xi, T_x \xi } 
\geq - 4 \kappa \inn{ T_x \xi, T_x \xi}
\]
in the $C^*$-algebra $C$, for all $\xi \in \Tex{Dom}(D) \cap \Tex{Dom}(D_1 \hot 1)$. Remark that this is enough since $\Tex{Dom}(D) \cap \Tex{Dom}(D_1 \hot 1) \su E$ is norm-dense. 

Let thus $\xi \in \Tex{Dom}(D) \cap \Tex{Dom}(D_1 \hot 1)$ be given. We have that
\[
\begin{split}
& 2 \cd \Tex{Re}\binn{ F_{D_x} (F_{D_1} \hot 1)|_{E_x} \big) T_x \xi, T_x \xi } \\
& \q = 
\frac{2}{\pi^2} \int_0^\infty \int_0^\infty (\la \mu)^{-1/2} \cd \Tex{Re}\binn{S_x(\la) S_1(\mu)|_{E_x} T_x \xi , T_x \xi} \, d\la d\mu \, ,
\end{split}
\]
where the integral converges absolutely in the norm on $C$ and the integrand is norm-continuous from $[0,\infty)^2$ to $C$. Now, by Lemma \ref{l:algebra} and the local positivity condition we have that
\[
\begin{split}
& 2 \cd \Tex{Re}\binn{S_x(\la) S_1(\mu)|_{E_x} T_x \xi , T_x \xi} = \sum_{j = 1}^4 Q_j(\la,\mu,x)(T_x \xi) \\
& \q \geq - \kappa \cd \sum_{j = 1}^4 \inn{M_j(\la,\mu,x) T_x \xi, M_j(\la,\mu,x) T_x \xi} \, .
\end{split}
\]
It therefore follows by Lemma \ref{l:analysis} that
\[
\begin{split}
& \frac{2}{\pi^2} \int_0^\infty \int_0^\infty (\la \mu)^{-1/2} \cd \Tex{Re}\binn{S_x(\la) S_1(\mu)|_{E_x} T_x \xi , T_x \xi} \\
& \q \geq -\kappa \cd \frac{1}{\pi^2}  \int_0^\infty \int_0^\infty (\la \mu)^{-1/2} \\ 
& \qqq  \cd \sum_{j = 1}^4 \inn{M_j(\la,\mu,x) T_x \xi, M_j(\la,\mu,x) T_x \xi} \, d\la d\mu \\
& \q = - \kappa \cd \sum_{j = 1}^4 \inn{T_x \xi , K_j(x) T_x \xi} \geq - 4 \kappa \inn{T_x \xi , T_x \xi} \, .
\end{split}
\]
But this proves the proposition.
\end{proof}

\bibliographystyle{amsalpha-lmp}

\providecommand{\bysame}{\leavevmode\hbox to3em{\hrulefill}\thinspace}
\providecommand{\MR}{\relax\ifhmode\unskip\space\fi MR }
\providecommand{\MRhref}[2]{%
  \href{http://www.ams.org/mathscinet-getitem?mr=#1}{#2}
}
\providecommand{\href}[2]{#2}

\end{document}